\documentclass[final,1p,times]{elsarticle}
\usepackage[centertags]{amsmath}
\usepackage{amssymb}
\usepackage{amsthm}
%
\newtheorem{thm}{Theorem}

\newtheorem{prop}[thm]{Proposition}
\newtheorem{cor}[thm]{Corollary}
\newdefinition{rem}{Remark}
\newdefinition{example}{Example}
\newdefinition{defn}{Definition}
\newcommand{\norm}[1]{\left\Vert#1\right\Vert}

\newcommand{\condexp}[1]{\left|}
\newcommand{\Real}{\mathbb R}

\newcommand{\A}{\mathcal{A}}
\newcommand{\punt}{\boldsymbol{.}\: \!}

\newcommand{\tbs}{\boldsymbol{t}}

\newcommand{\ybs}{\boldsymbol{y}}

\newcommand{\lambdabs}{\boldsymbol{\lambda}}
\newcommand{\mmodels}{\boldsymbol{\vdash}}

\newcommand{\ibs}{\boldsymbol{i}}
\newcommand{\mubs}{\boldsymbol{\mu}}

\newcommand{\nubs}{\boldsymbol{\nu}}

\newcommand{\jbs}{\boldsymbol{j}}
\newcommand{\ml}{\mathfrak{m}}

\newcommand{\zbs}{\boldsymbol{z}}

\newcommand{\cop}{\scriptscriptstyle <-1>}

\newcommand{\Tr}{\hbox{\rm Tr}}
%
\journal{Journal of Multivariate Analysis}

\begin{document}

\begin{frontmatter}



\title{On a symbolic representation of non-central Wishart random matrices with applications}
\author[ed]{Elvira Di Nardo\corref{cor1}}
\ead{elvira.dinardo@unibas.it}
\cortext[cor1]{Corresponding author}
\address[ed]{Dept. Mathematics, Computer Science and Economics,
University of Basilicata, Viale dell'Ateneo
Lucano 10, 85100 Potenza, Italia}
\ead[url]{http://www.unibas.it/utenti/dinardo/}
\begin{abstract}
By using a symbolic method, known in the literature as the classical umbral calculus, the trace of a non-central Wishart random matrix is represented as the convolution of the trace of its central component and of a formal variable involving traces of its non-centrality matrix. Thanks to this representation, the moments of this random matrix are proved to be a Sheffer polynomial sequence, allowing us to recover several properties. The multivariate symbolic method generalizes the employment of Sheffer representation and a closed form formula for computing joint moments and cumulants (also normalized) is given. By using this closed form formula and a combinatorial device, known in the literature as necklace, an efficient algorithm for their computations is set up.  Applications are given to the computation of permanents as well as to the characterization of inherited estimators of cumulants, which turn useful in dealing with minors of non-central Wishart random matrices. An asymptotic approximation of generalized moments involving free probability is proposed.
\end{abstract}

\begin{keyword}
random matrix \sep moment method \sep umbral calculus \sep Sheffer polynomial sequence \sep complete homogeneous polynomial \sep cyclic polynomial \sep cumulant \sep necklace \sep permanent \sep spectral polykay \sep free probability
\end{keyword}

\end{frontmatter}


\section{Introduction}
Let $\{X_{(1)}, \ldots, X_{(n)} \}$ be random row vectors independently drawn from a $p$-variate complex normal
distribution with zero mean ${\boldsymbol 0}$ and full rank Hermitian covariance matrix $\Sigma.$ Let
$\boldsymbol{m}_1, \ldots, \boldsymbol{m}_n$ be row vectors of dimension $p.$
The {\it non-central Wishart distribution} is the distribution of the square random matrix of order $p$
\begin{equation}
W_p(n, \Sigma, M) = \sum_{i=1}^n (X_{(i)} - \boldsymbol{m}_i)^\dag (X_{(i)} - \boldsymbol{m}_i), \qquad \hbox{with} \,\, M = \sum_{i=1}^n \boldsymbol{m}_i^\dag \boldsymbol{m}_i,
\label{(wishnoncentral)}
\end{equation}
denoted by $W(n)$ for simplicity. The matrix $\Omega = \Sigma^{-1} M$ is called the non-centrality matrix and it is usually employed  instead of $M$ to parametrize the Wishart distribution. For $M=0$ the Wishart distribution is said to be {\it central} and denoted by $\widehat{W}(n)$ in the following.

The most popular application of Wishart distributions is within maximum likelihood estimation in connection with the sample covariance matrix. Therefore these distributions are successfully employed in several areas: a good review is given in \cite{Withers}. In parallel to its spread in the applications, also its mathematical properties have received great attention
in the literature, including those involving determinants and eigenvalues. According to the so-called moment method \cite{Tao}, which relies on eigenvalue distribution,
the $i$-th moment of a $p \times p$ random matrix $A$ is $E[\Tr_p(A^i)],$ where $\Tr_p$ denotes the normalized trace. In particular, the computation of joint moments
\begin{equation}
E\left\{ \Tr \left[ W(n) \, H_1 \right]^{i_1} \cdots \Tr \left[ W(n) \, H_m \right]^{i_m} \right\}
\qquad \hbox{with} \quad H_1, \ldots, H_m \in {\mathbb C}^{p \times p}
\label{(joint)}
\end{equation}
is a very general task: for example, when $H_1, \ldots, H_m$ are sparse matrices, equation
(\ref{(joint)}) returns joint moments of entries of $W(n).$ Moments of  Wishart random matrices are employed not only to characterize asymptotic properties of the sample covariance matrix \cite{Maiwald} but also to quantify the performance of many multidimensional signal processing algorithms \cite{Tague}.
For example, within wireless communication theory, most literature focuses on channel capacity modeled by a memoryless linear vector of the form 
\begin{equation}
{\bf y} = B {\bf x} + {\bf n}
\label{(modelchannel)}
\end{equation}
with ${\bf x}$ a $p$-dimensional input vector, ${\bf y}$ a $n$-dimensional output vector and ${\bf n}$ a $n$-dimensional vector representing a circularly symmetric Gaussian noise. The $n \times p$ random matrix $B$ encodes the characteristics of the channel system having $n$ receivers and $p$ transmitters.  Quite often performance measures of (\ref{(modelchannel)})
rely on the product $B^{\dag} B,$ which is a non-central Wishart random matrix when the
entries of $B$ are Gaussian i.i.d. random variables (r.v.'s) \cite{Tulino}. Numerical evaluations of these indexes involve cumbersome integrals, but fortunately, much deeper insights can be obtained using the tools provided by asymptotic random matrix theory, that is to characterize the asymptotic spectrum of $B$ as the number of columns and rows goes to infinity. Usually the Stieltjes transformation is employed as uniquely determines the distribution function of the spectrum. As an alternative, especially when the inversion formula of Stieltjes-Perron could not be applied, the evaluation of normalized moments $\Tr(B^{\dag} B)^i$ together with the study of their convergence provide a tool to characterize the limiting spectrum through the moment convergence theorem. Since random matrices are non-commutative objects, for dimensions greater than $8$ free probability tools can be employed to recognize if two or more channels are independent. Within free probability, the classical notion of independence is replaced by the notion of freeness. Freeness allows us to recover the asymptotic spectrum of the sum of random matrices from the individual asymptotic spectra. In order to recognize the freeness property a great number of joint moments need to be computed, so that efficient algorithms are necessary. 

For finite matrix theory, the computation of joint moments is still object of in-depth analysis due to its complexity. In order to compute (\ref{(joint)}) when $i_1 = \cdots = i_m = 1,$  the theory of representation group together with the differential of the Laplace transform in suitable directions have been employed in \cite{Letac1, Letac2}.  Weighted generating functions on matchings of graphs have been used in \cite{Numata}. In order to 
compute joint moments in terms of joint cumulants, multivariate Bell polynomials are resorted in \cite{Withers} but the resulting expressions are cumbersome to deal with. Via Edgeworth expansion, joint cumulants of Wishart distributions are employed in approximating density or distribution functions of some statistics relied on quadratic forms of normal samples \cite{Kollo1}.

The aim of this paper is twofold: to give a closed form formula to compute joint moments (\ref{(joint)}), thus generalizing the result given in \cite{Letac2}, and to introduce a symbolic method, known in the literature as the classical umbral calculus, as a natural way to deal with moments of Wishart distributions. This method allows us to manage moment sequences without making hypothesis on the existence of an underlying distribution probability. Many results rely on the representation of the trace of a non-central Wishart random matrix as the convolution of the traces of its central component and of a formal variable (or symbol), {\it uncorrelated} with the entries of the central component. This representation allows us to express $\{E[\Tr(W(n)^i)]\}$ as a Sheffer polynomial sequence \cite{Nied}. Then many Sheffer properties are shared: for example the role played by complete homogeneous symmetric polynomials as well as cyclic polynomials is highlighted when referred to the central component, whose moments result to be a sequence of binomial type. When the indeterminate of a Sheffer polynomial sequence is replaced by a suitable r.v., a randomized non-central Wishart distribution  is recovered. The multivariate version of
this symbolic method extends the employment of Sheffer polynomial sequences to the computation of joint moments (\ref{(joint)}). Moreover, in order to take into account the cyclic property of traces, the notion of necklace is fruitfully employed in setting up an efficient symbolic procedure. The algorithm in {\tt Maple 12} is available on demand. Parallelisms with  other techniques proposed for computing (\ref{(joint)})  are given forward.

The usefulness of the symbolic representation of non-central Wishart distributions is not only confined to computational issues. To widening its applicability, further applications are proposed at the end of the paper. The first concerns the computation of permanents of special classes of matrices. The second characterizes spectral polykays which are unbiased estimators of cumulants. Spectral polykays have the advantage to preserve its expansion in terms of power sum symmetric polynomials
when referred to principal sub matrices. Last application allows us an asymptotic approximation of generalized moments relied on some results borrowed from
free probability.

In order to make the paper self-contained, a short introduction on the symbolic method and on its multivariate version is given in Section 2 and Section 4 respectively. Open problems still concern an exact computation of generalized moments as well as the employment of the symbolic method in computing moments of a 
non-central Wishart random matrix inverse, which are also employed within signal processing
\cite{Maiwald}. The difficulty of this task relies on the circumstance that these moments are usually expressed in terms of zonal polynomials whose efficient handling is still an appealing problem from a computational point of view.

\section{Moment symbolic method: the univariate case}
The moment symbolic method we use relies on the classical
umbral calculus introduced by Rota and Taylor in $1994$ \cite{SIAM}. The method
has been developed and refined in a series of papers starting from
\cite{Dinsen, Dinardoeurop} reducing the overall computational apparatus to few fundamental relations.
The key point consists in working with symbols instead of scalar or polynomial sequences.
\par
Let ${\mathbb C}$ be the complex field whose elements are called scalars.
An umbral calculus consists in a generating set $\A=\{\alpha,\beta, \ldots\}$,
called \textit{alphabet}, whose elements are named \textit{umbrae}, a polynomial
ring ${\mathbb C}[\A]$ and a linear functional $E\colon {\mathbb C}[\A]\to {\mathbb C}$ called \textit{evaluation}.
The linear functional $E$ is such that $E[1]=1$ and
\begin{equation}
E[\alpha^{i} \beta^{j} \cdots \gamma^{k}] = E[\alpha
^{i}] \, E[\beta^{j}] \cdots E[\gamma^{k}], \quad
{\hbox{(uncorrelation property)}} \label{(iii)}
\end{equation}
for any set of distinct umbrae in $\A$ and for $i,j,k$ nonnegative integers. In particular, we set $ E[\alpha^i] = a_i$ for $i=0,1,\ldots$ with $a_0=1$
and $\alpha \in \A.$ The scalar $a_i$ is the $i$-th moment of $\alpha$
and we say that the sequence $\{a_i\}$ is \emph{umbrally represented}
by $\alpha$.
Conversely, a sequence of scalars $\{a_i\}$ with $a_0=1$ is represented by an
umbra~$\alpha$ if $E[\alpha^i] = a_i$ for $i=0,1,\ldots$.
Indeed it is always possible to insert in the alphabet new symbols such that
each scalar sequence $\{a_i\}$ corresponds to an element in $\A,$ which is not
necessarily unique. Indeed, two umbrae $\alpha$ and $\gamma$ may represent
the same sequence of moments when $E[\alpha^i] = E[\gamma^i]$ for all nonnegative integers $i.$
In this case, $\alpha$ and $\gamma$ are said to be similar, in symbols
$\alpha \equiv \gamma.$ For umbral polynomials $p, q \in {\mathbb C}[\A]$ a nimbler
equivalence is the umbral equivalence: $p \simeq q$ if and only if $E[p]=E[q].$
Umbral polynomials with disjoint supports \footnote{The support of an umbral polynomial $p \in {\mathbb C}[\A]$ is the set of all umbrae which occur.}
are uncorrelated, that is $E[p \, q] = E[p] \, E[q].$

The alphabet $\A$ contains special symbols representing special sequences.
For example the {\it augmentation umbra} $\varepsilon$
is such that $E[\varepsilon^i]=0$ for all positive integers $i$ and
the {\it unity umbra} $u$ is such that $E[u^i]=1$ for all positive integers $i$.
Further special umbrae will be introduced when necessary.
The formal power series in ${\mathbb C}[\A][[z]]$
\begin{equation}
u + \sum_{i \geq 1} \alpha^i \, \frac{z^i}{i!}
\label{(gf1)}
\end{equation}
is the {\it generating function} (g.f.) of an umbra $\alpha$ and
denoted by $e^{\alpha z}$. Moreover, for any exponential formal power series in ${\mathbb C}[[z]]$
\begin{equation}
f(z) = 1 + \sum_{i \geq 1} a_i \, \frac{z^i}{i!}
\label{(formpow)}
\end{equation}
there exists an umbra $\alpha$ such that $E[e^{\alpha z}]=f(z),$  by extending coefficient-wise
the action of $E,$ cf. \cite{Bernoulli}. The formal power series $f(z)$ in (\ref{(formpow)}) is the g.f. of $\alpha,$ usually denoted by $f(\alpha, z)$ to avoid misunderstandings.  
\par
For distinct (and so uncorrelated) umbrae $\alpha$ and $\gamma,$ we have $f(\alpha+\gamma,z) = f(\alpha,z)f(\gamma,z).$
More in general $f(\alpha' + \cdots + \alpha'',z) = [f(\alpha,z)]^n$ if $\alpha', \ldots, \alpha''$ are $n$ uncorrelated umbrae similar to an umbra $\alpha.$
The summation $\alpha' + \cdots + \alpha''$ is denoted by the auxiliary symbol $n \, \punt \, \alpha$ which is named the \emph{dot product} of the nonnegative integer $n$
and the umbra $\alpha.$ Its moments are \cite{Bernoulli}
\begin{equation}
E[(n \, \punt \, \alpha)^i] = \sum_{\lambda \vdash i} (n)_{l({\lambda})} \, d_{\lambda} \, a_{\lambda}
\label{(sum3)}
\end{equation}
where $\lambda = (1^{r_1}, 2^{r_2}, \ldots)$ is a partition\footnote{Recall that a partition of an integer $i$ is a sequence
$\lambda=(\lambda_1,\lambda_2,\ldots,\lambda_t)$, where
$\lambda_j$ are weakly decreasing integers and $\sum_{j=1}^t
\lambda_j = i$. The integers $\lambda_j$ are named {\it parts} of
$\lambda$. The {\it length} of $\lambda$ is the number of its
parts and will be denoted by $l({\lambda})$. A different
notation is $\lambda=(1^{r_1},2^{r_2},\ldots)$, where $r_j$ is the
number of parts of $\lambda$ equal to $j$ and $r_1 + r_2 + \cdots
= l({\lambda})$. We use the classical notation $\lambda \vdash
i$ with the meaning \lq\lq $\lambda$ is a partition of $i$\rq\rq.} of the integer $i$, $a_{\lambda}=a_1^{r_1} a_2^{r_2} \cdots$ and
$$d_{\lambda} = \frac{i!}{(1!)^{r_1} r_1! (2!)^{r_2} r_2! \cdots}.$$
Let us recall that if $\alpha,\gamma \in \A$ then
\begin{equation}
n \, \punt \, (\alpha + \gamma) \equiv n \, \punt  \, \alpha + n \, \punt \, \gamma \qquad \hbox{and} \qquad (n+m) \, \punt \, \alpha \equiv n \, \punt \, \alpha + m \, \punt  \, \alpha.
\label{(distrib)}
\end{equation}
The integer $n$ in (\ref{(sum3)}) can be replaced by $-1.$ The auxiliary umbra $-1 \, \punt \, \alpha$ is the \emph{inverse} umbra
of $\alpha$ with g.f. $f(-1 \, \punt \, \alpha,z)= f(\alpha,z)^{-1}.$ Also the composition of g.f.'s can be symbolically represented by using the dot-product. The first step is to replace the integer $n$ with an umbra
$\gamma \in \A,$ so that
\begin{equation}
f(\gamma \, \punt \, \alpha, z) = f(\gamma, \log[f(\alpha,z)]),
\label{(dotproduct)}
\end{equation}
which is not yet the composition of $f(\alpha,z)$ and $f(\gamma,z)$ but plays a fundamental role in the symbolic method. Indeed, if in
(\ref{(dotproduct)}) the umbra $\gamma$ is replaced by the \emph{singleton umbra} $\chi$ with g.f. $f(\chi,z)=1+z,$ then $f(\chi \, \punt \, \alpha, z) = 1 +  \log[f(\alpha,z)]$ and the moments of the auxiliary umbra $\varkappa_{\alpha} \equiv \chi \, \punt \, \alpha$ result to be the formal cumulants\footnote{For all nonnegative integers $i,$ cumulants ${\mathfrak C}_i(Y)$ of a r.v. $Y$ have the following properties:  (Homogeneity) ${\mathfrak C}_i(a Y) = a^i {\mathfrak C}_i (Y)$ for $a \in {\mathbb C},$ (Semi-invariance) ${\mathfrak C}_1(Y+a)=a + {\mathfrak C}_1(Y), {\mathfrak C}_i(Y+a) = {\mathfrak C}_i(Y)$ for $i \geq 2,$ (Additivity) ${\mathfrak C}_i(Y_1+Y_2) = {\mathfrak C}_i(Y_1) + {\mathfrak C}_i(Y_2),$ if $Y_1$ and $Y_2$ are independent r.v.'s. Formal cumulants are the coefficients of $\log[f(z)]$ with $f(z)$ a formal power series. They share the same properties of cumulants of a r.v.'s. When numerical values are considered, we need to check that the so-called \lq\lq problem of cumulants\rq\rq admits solution, cf. \cite{Bernoulli} and references therein. } of $\alpha$.  The umbra $\varkappa_{\alpha}$ is the $\alpha$-\emph{cumulant umbra}, cf. \cite{Dinardoeurop}. A special cumulant umbra, employed very often in the rest of the paper, is  $\varkappa_{\bar{u}} \equiv \chi \, \punt \, \bar{u},$ with $\bar{u}$ the boolean unity \cite{Petrullo}. The boolean unity represents the sequence $\{i!\}$ and its g.f. is the ordinary formal power series
\begin{equation}
f(\bar{u},z) = 1 + \sum_{i \geq 1} z^i = \frac{1}{1-z}.
\label{(boolean)}
\end{equation}
 From (\ref{(dotproduct)}) and (\ref{(boolean)}), we have $f(\chi \, \punt \, \bar{u},z) = 1 + \log (1-z)^{-1} = 1 + \sum_{i \geq 1} \frac{z^i}{i}$
so that $E[(\chi \, \punt  \, \bar{u})^i] = (i-1)!$ for all positive integers $i.$

If in $\gamma \, \punt \, \alpha$ we replace the umbra $\gamma$ with the Bell umbra
 \footnote{The Bell umbra represents the sequence of Bell numbers. Its
g.f. is $f(\beta,z)=\exp[e^z-1]$.} $\beta,$ the dot-product $\beta \, \punt \, \alpha$ denotes the
$\alpha$-\emph{partition umbra}. One of its main properties is
\begin{equation}
\beta \, \punt \, (\alpha \dot{+} \gamma) \equiv \beta \, \punt \, \alpha + \beta  \, \punt \, \gamma
\label{(dotsum)}
\end{equation}
where the symbol $\alpha \dot{+} \gamma,$ denotes the \emph{disjoint sum} of $\alpha$ and $\gamma$ and umbrally represents the sequence $\{a_n + g_n\}$ with $\{a_n\}$ and $\{g_n\}$ moments of $\alpha$ and $\gamma$
respectively. More in general, the symbol $\dot{+}_{k=1}^n \alpha_k$ denotes an auxiliary umbra whose $i$-th moment
is the summation of the $i$-th moments of $\alpha_1, \ldots, \alpha_n$ respectively, that is
\begin{equation}
{\mathcal S}_i=E \left[\left( \dot{+}_{k=1}^n \alpha_k \right)^i \right] = a_{i,1} + \cdots + a_{i,n},
\label{(powersum)}
\end{equation}
with $a_{i,j}=E[\alpha_j^i]$ for all nonnegative integers $i$ and $j=1,2,\ldots,n.$
Since ${\mathcal S}_i \simeq \alpha_1^i + \cdots + \alpha^i_n,$ the symbol ${\mathcal S}_i$ is used in analogy
with the $i$-th power sum symmetric polynomial. A special auxiliary umbra employed in the following is $-1 \, \punt \,
\beta \, \punt \, \alpha$ with g.f. $f(-1 \, \punt \, \beta \, \punt \, \alpha,z) = \exp \left[ 1 - f(\alpha,z) \right].$
\par
 Nesting dot-products, one of which is the $\alpha$-\emph{partition umbra}, we get the composition umbra. Indeed the symbol
$\gamma \, \punt \, (\beta \, \punt \, \alpha)$, where $\beta$ is the Bell umbra, has g.f. which is
the composition of $f(\alpha,z)$ and $f(\gamma,z),$ that is $f(\gamma \, \punt \, (\beta \, \punt \, \alpha), z) = f(\gamma, f(\alpha,z)-1).$
Parenthesis can be avoided since $\gamma \, \punt \, (\beta \, \punt \, \alpha) \equiv (\gamma \, \punt \, \beta) \, \punt \, \alpha$.
From (\ref{(sum3)}) the moments are \cite{Dinardoeurop}
\begin{equation}
E[(\gamma \, \punt \, \beta \, \punt \, \alpha)^i] = \sum_{\lambda \vdash i} g_{l({\lambda})} \, d_{\lambda} \, a_{\lambda},
\label{comp}
\end{equation}
with $\{g_{i}\}$ umbrally represented by $\gamma.$ For $f(z)=f(\alpha,z),$ the coefficients of the compositional inverse of $f(z)$ are represented by the symbol
$\alpha^{\scriptsize <-1>}$ such that $\alpha^{\scriptsize <-1>} \, \punt \, \beta \, \punt \, \alpha \equiv \alpha \, \punt \, \beta \, \punt \,
\alpha^{\scriptsize <-1>} \equiv \chi.$ The umbra $\alpha^{\scriptsize <-1>}$ is the
compositional inverse of $\alpha.$
\par
Also polynomial sequences in the indeterminate $x$ can be represented by suitable umbrae,
if  the complex field ${\mathbb C}$ is replaced by ${\mathbb C}[x].$ The uncorrelation property (\ref{(iii)}) becomes
$E[x^i \alpha^k \gamma^m \cdots] = x^i E[\alpha^k] E[\gamma^m] \cdots$
for any set of distinct umbrae in $\A$ and for nonnegative integers $i, k, m, \ldots.$ In ${\mathbb C}[x][\A],$ an umbra
is said to be a scalar umbra when its moments are elements of ${\mathbb C},$ while it is said to be a polynomial umbra
if its moments are polynomials of ${\mathbb C}[x].$ A sequence of polynomials
$q_0, q_1, \ldots \in  {\mathbb C}[x]$ is umbrally represented by a polynomial umbra if $q_0 = 1$ and $q_i$ is of degree $i$ for all nonnegative integers $i.$
A special family of polynomials, including many classical polynomial sequences, is the Sheffer polynomial sequence.
In \cite{Nied}, it has been proved that any Sheffer polynomial sequence is umbrally represented by
the so-called {\it Sheffer polynomial umbra} $\alpha + x \, \punt \, \beta \, \punt \, \gamma^{\cop},$ with $\gamma$
an umbra possessing compositional inverse, that follows if $E[\gamma] \ne 0.$ If $\alpha$ is replaced by the augmentation umbra $\varepsilon$ then the Sheffer polynomial sequence
reduces to its associated sequence $x \, \punt \, \beta \, \punt \, \gamma^{\cop},$ satisfying the binomial property \cite{Niederhausen}.

The polynomial ring ${\mathbb C}[x]$ can be further replaced by ${\mathbb C}[x_1, \ldots,x_p]$ with $x_1, \ldots , x_p$  indeterminates
\cite{Bernoulli}. As example, but also because employed in the following, let us
recall that the complete homogeneous symmetric polynomials $\{h_i\}$
are umbrally represented by the umbral polynomial $x_1 \bar{u}_1 +  \cdots + x_p \bar{u}_p,$ with $\bar{u}_1,  \ldots, \bar{u}_p$ uncorrelated umbrae similar to the boolean unity $\bar{u}.$
Indeed we have
\begin{equation}
h_i(x_1, \ldots, x_p) = \sum_{1 \leq j_1 \leq \cdots \leq j_i \leq p} x_{j_1}  \cdots x_{j_i} = \frac{1}{i!} \, E[(x_1 \bar{u}_1 + \cdots + x_p \bar{u}_p)^i].
\label{(omo)}
\end{equation}
\section{Symbolic representation of Wishart distributions}
According to the moment method, the trace
of $W(n)$ is represented by an umbral polynomial $\mu_1 +  \cdots + \mu_p
\in {\mathbb C}[\A],$ where $\{\mu_1, \ldots, \mu_p\}$ are umbral monomials
having not necessarily disjoint supports and corresponding to the eigenvalues of $W(n).$ For the sake of brevity, we denote the summation $\mu_1 + \cdots + \mu_p$ with $\Tr[W(n)]$ as if it was an element of ${\mathbb C}[\A].$  This notation will be employed in the rest of the paper.

\begin{thm} \label{Wifund1}
If $\hbox{\rm diag}(\theta_1, \ldots, \theta_p) = \Lambda$ is the eigenvalue matrix of the covariance $\Sigma = Q \Lambda Q^\dag,$ with $Q$ the eigenvector matrix,
$\{\bar{u}_1, \ldots, \bar{u}_p \}$ uncorrelated umbrae similar to the boolean unity umbra $\bar{u}$ and $\{\nu_1, \ldots, \nu_p\}$ umbral monomials such that
\begin{equation}
\nu_k = \chi \, \punt \, b_{kk} \punt \, \beta \quad \hbox{\rm for} \,\, k=1,2,\ldots,p,
\label{(umbni)}
\end{equation}
with $b_{11} + \cdots + b_{pp}  = \Tr(\Omega)$ and $\Omega$ the non-centrality matrix $\Omega = \Sigma^{-1} M,$ then
\begin{equation}
\Tr[W(n)] \equiv \sum_{k=1}^p [-1 \, \punt \, \beta \, \punt \, (\nu_k  \theta_k \bar{u}_k) +  n \, \punt \, (\theta_k \bar{u}_k)]
\equiv -1 \, \punt \, \beta \, \punt \, \left( \dot{+}_{k=1}^p \nu_k \theta_k  \bar{u}_k \right) + n \, \punt \, \left(+_{k=1}^p  \theta_k \bar{u}_k
\right),
\label{(Iformachiusa1)}
\end{equation}
where the symbol ${+}_{k=1}^p  \theta_k \bar{u}_k$ denotes $\theta_1 \bar{u}_1 + \cdots + \theta_p \bar{u}_p.$
\end{thm}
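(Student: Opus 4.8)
The plan is to prove (\ref{(Iformachiusa1)}) by showing that the two sides carry the same moment sequence, i.e.\ the same generating function, and then to pass between the two displayed expressions using only the distributive laws (\ref{(distrib)}) and the dot-sum property (\ref{(dotsum)}). Throughout I identify an umbra with its g.f.\ $f(\cdot,z)$ and use that $\alpha\equiv\gamma$ holds iff $f(\alpha,z)=f(\gamma,z)$, so the whole argument reduces to a generating-function identity plus bookkeeping.

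First I would reduce the distributional side to a product of elementary factors. Because $\Tr$ is invariant under the unitary substitution $X_{(i)}\mapsto X_{(i)}Q$, diagonalizing $\Sigma=Q\Lambda Q^\dag$ gives $\Tr[W(n)]=\sum_{i=1}^n\sum_{k=1}^p|Z_{ik}|^2$, where the $Z_{ik}$ are independent, each $Z_{ik}$ complex normal with variance $\theta_k$ and mean $\mu_{ik}=(\boldsymbol m_iQ)_k$. Each $|Z_{ik}|^2$ is a scaled non-central chi-square in two real degrees of freedom, with moment g.f.\ $(1-\theta_kz)^{-1}\exp\!\big(|\mu_{ik}|^2z/(1-\theta_kz)\big)$; multiplying over the independent indices $i,k$ and using $\sum_{i=1}^n|\mu_{ik}|^2=(Q^\dag MQ)_{kk}$ yields
\begin{equation}
E\big[e^{z\,\Tr[W(n)]}\big]=\prod_{k=1}^p(1-\theta_kz)^{-n}\,\exp\!\Big(\frac{\theta_k b_{kk}\,z}{1-\theta_kz}\Big),\qquad b_{kk}:=\theta_k^{-1}(Q^\dag MQ)_{kk}.
\label{plangf}
\end{equation}
This normalization is consistent with (\ref{(umbni)}), since $\Tr(\Omega)=\Tr(\Lambda^{-1}Q^\dag MQ)=\sum_k\theta_k^{-1}(Q^\dag MQ)_{kk}=\sum_k b_{kk}$, so these scalars are exactly the $b_{kk}$ of the hypothesis.

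Next I would compute the g.f.\ of the umbral right-hand side and match it to (\ref{plangf}). The central summand $n\punt(+_{k}\theta_k\bar u_k)$ has g.f.\ $\prod_k(1-\theta_kz)^{-n}$, using the power rule $f(n\punt\alpha,z)=f(\alpha,z)^n$, the uncorrelation of the $\bar u_k$, and $f(\theta_k\bar u_k,z)=(1-\theta_kz)^{-1}$ from (\ref{(boolean)}). For the non-central summand I would first read off the moments of $\nu_k=\chi\punt b_{kk}\punt\beta$: these are the cumulants of the Poisson-type umbra $b_{kk}\punt\beta$, whose g.f.\ is $\exp[b_{kk}(e^z-1)]$, hence via $f(\chi\punt\alpha,z)=1+\log f(\alpha,z)$ they are constant in the degree and proportional to $b_{kk}$. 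Since $\nu_k$ and $\bar u_k$ are uncorrelated, $f(\nu_k\theta_k\bar u_k,z)-1$ is a multiple of the geometric series $\theta_kz/(1-\theta_kz)$; substituting this into the partition-inverse rule $f(-1\punt\beta\punt\alpha,z)=\exp[1-f(\alpha,z)]$ together with the disjoint-sum identity $f(\dot{+}_k\alpha_k,z)=1+\sum_k\big(f(\alpha_k,z)-1\big)$ coming from (\ref{(powersum)}) collapses the non-central summand, after simplification, to $\prod_k\exp\!\big(\theta_k b_{kk}z/(1-\theta_kz)\big)$, the remaining factor of (\ref{plangf}). As this formal variable is uncorrelated with the central component, the two g.f.'s multiply and reproduce (\ref{plangf}), giving the second $\equiv$ in (\ref{(Iformachiusa1)}); the first $\equiv$ then follows by distributing over $k$, namely $-1\punt\beta\punt(\dot{+}_k\alpha_k)\equiv\sum_k(-1\punt\beta\punt\alpha_k)$ by (\ref{(dotsum)}) and the case $n=-1$ of (\ref{(distrib)}), and $n\punt(+_k\theta_k\bar u_k)\equiv\sum_k n\punt(\theta_k\bar u_k)$ by the first identity in (\ref{(distrib)}).

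I expect the main obstacle to be the non-central factor rather than the (standard) central one. One must read $\nu_k=\chi\punt b_{kk}\punt\beta$ correctly as the cumulant umbra attached to $b_{kk}\punt\beta$, track how its degree-independent moment survives the product with the boolean unity $\bar u_k$, and check that the partition-inverse $-1\punt\beta$ turns the resulting geometric g.f.\ into precisely the non-centrality exponential of (\ref{plangf}); the sign bookkeeping hidden in $1-f(\cdot,z)$ is where an error is most likely to creep in, and the cleanest safeguard is to verify the first two moments against $E[\Tr W(n)]=n\,\Tr\Sigma+\Tr M$. The other point requiring care is the identification of the scalars $b_{kk}$ with the diagonal of $\Lambda^{-1}Q^\dag MQ$, which is what makes $\sum_k b_{kk}=\Tr(\Omega)$ and ties the formal construction back to the non-centrality matrix.
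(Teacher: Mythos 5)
Your overall strategy coincides with the paper's: establish (\ref{(Iformachiusa1)}) at the level of generating functions and then distribute over $k$ via (\ref{(distrib)}) and (\ref{(dotsum)}). The one methodological difference is on the analytic side: the paper simply quotes the m.g.f.\ of $\Tr[W(n)]$ from \cite{Muirhead} (its equation (\ref{gfomega})) and diagonalizes, whereas you re-derive it by rotating to the eigenbasis of $\Sigma$ and factoring $\Tr[W(n)]$ into independent scaled non-central chi-squares. Your identification of $b_{kk}$ with the diagonal of $Q^\dag\Omega Q$ and your computation of the moments of $\nu_k\theta_k\bar u_k$ (namely $i!\,b_{kk}\theta_k^i$, so that $f(\nu_k\theta_k\bar u_k,z)-1=b_{kk}\theta_kz/(1-\theta_kz)$) agree with the paper's.

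There is, however, a genuine problem at exactly the step you flag as delicate. The non-central summand of the right-hand side of (\ref{(Iformachiusa1)}), evaluated with the rule $f(-1\,\punt\,\beta\,\punt\,\alpha,z)=\exp[1-f(\alpha,z)]$ that you yourself invoke, has g.f.\ $\exp\bigl[-\sum_k b_{kk}\theta_kz/(1-\theta_kz)\bigr]$, with a \emph{minus} sign; the claim that it ``collapses to'' $\prod_k\exp\bigl[+\theta_kb_{kk}z/(1-\theta_kz)\bigr]$ is a sign error, not a simplification. The paper's proof has no such step to get wrong, because its starting m.g.f.\ (\ref{gfomega}) already carries $\exp\{-\Tr[(I_p-z\,\Sigma)^{-1}\Sigma\Omega z]\}$ and so matches the umbral side verbatim; your independently derived m.g.f.\ carries the opposite sign in the non-central exponential, so as written your two sides differ by the sign of the entire non-central factor and the proof does not close. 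Your own proposed sanity check exposes this: the first moment of the right-hand side of (\ref{(Iformachiusa1)}) is $-\Tr(M)+n\,\Tr(\Sigma)$, not the $n\,\Tr\Sigma+\Tr M$ you propose to verify against (compare Theorem \ref{cum2} and Corollary \ref{corex}, which are consistent with the minus sign). To prove the theorem as stated you must start from the m.g.f.\ in the form (\ref{gfomega}) used by the paper and reconcile it with your chi-square computation; once the analytic and umbral signs are made to agree, the remainder of your argument is correct and is essentially the paper's.
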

\begin{proof}
The moment generating function (m.g.f.) of $W(n)$ is \cite{Muirhead}
\begin{eqnarray}
f(\Tr[W(n)],z) & = &  E\left\{ \exp \left( \Tr[W(n)] \, z \right) \right\} = \frac{\exp \left\{- \Tr[(I_p - z \, \Sigma)^{-1} M \, z]\right\}}{[\det{(I_p - z \, \Sigma)}]^n} \label{(laptrasf)} \nonumber \\
 & = &  \frac{\exp \left\{- \Tr[(I_p - z \, \Sigma)^{-1} \Sigma \Omega \, z]\right\}}{[\det{(I_p - z \, \Sigma)}]^n}. \label{gfomega}
\end{eqnarray}
If $\theta_1, \ldots, \theta_p$ are the eigenvalues of $\Sigma,$ then 
$$\det{(I- z \, \Sigma)}^{-1} = \prod_{i=1}^p (1 - z \, \theta_i)^{-1} \qquad \hbox{and} \qquad [\det{(I - z \, \Sigma)}]^{-n} = f[n \, \punt \, (\theta_1 \bar{u}_1 + \cdots + \theta_p \bar{u}_p),z]$$
from (\ref{(boolean)}). From the first equivalence in (\ref{(distrib)})
$$n \, \punt \, \left( {+}_{k=1}^p  \theta_k \bar{u}_k \right) \equiv n \, \punt \, (\theta_1 \bar{u}_1) + \cdots + n \, \punt  \, (\theta_p \bar{u}_p)$$
as given in (\ref{(Iformachiusa1)}). Moreover, in (\ref{gfomega}) if $\Sigma = Q \Lambda Q^\dag,$ then $\Tr[(I_p - z \, \Sigma)^{-1}  \Sigma \, \Omega \, z] = 
\Tr[Q (I - z \, \Lambda)^{-1} \, Q^\dag \, Q \, \Lambda \, Q^\dag \, \Omega \, z] =
\Tr[(I - z \, \Lambda)^{-1} \, \Lambda \, B \, z],$ with $B=Q^\dag \Omega  Q.$ Therefore we have
\begin{equation}
\Tr[(I - z \, \Lambda)^{-1} \, \Lambda \, B \, z] = \sum_{k=1}^p \frac{b_{kk} \theta_k \, z}{(1 - z \, \theta_k)}.
\label{(esponentebis)}
\end{equation}
The $i$-th coefficient of the formal power series in (\ref{(esponentebis)}) is
$i! \, (b_{11} \, \theta_1^{i} + \cdots + b_{pp} \, \theta_p^{i}).$ Since
$i! \, \theta_k^{i} = E[(\theta_k \bar{u}_k)]^i$ and $b_{kk}=E[(\chi \, \punt \, b_{kk} \, \punt \, \beta)^i],$ for all nonnegative integers $i,$ then
\begin{equation}
i! \, \mathcal{S}_i = i! \, (b_{11} \, \theta_1^{i} + \cdots + b_{pp} \, \theta_p^{i}) = E \left[ \left(
\dot{+}_{k=1}^p \nu_k \theta_k  \bar{u}_k \right)^i \right],
\label{(ccc11)}
\end{equation}
with $\nu_k$ given in (\ref{(umbni)})  and $\mathcal{S}_i $ in (\ref{(powersum)}). Note that $\mathcal{S}_i=\Tr(B \Lambda^i)=\Tr(\Omega \Sigma^i).$ Since \cite{Dinardoeurop}
$$-1 \, \punt \, \beta \, \punt \, \left( \dot{+}_{k=1}^p \nu_k \theta_k  \bar{u}_k \right) \equiv -1 \, \punt \, \beta \, \punt  \,(\nu_1 \theta_1 \bar{u}_1) + \cdots + -1 \, \punt \, \beta \, \punt \, (\nu_p \theta_p  \bar{u}_p),$$
both the contributions in (\ref{(Iformachiusa1)}) follow from (\ref{(dotsum)}). 
\end{proof}
In the following we always assume that the hypothesis of Theorem \ref{Wifund1} hold. 
In order to take advantage of equivalence (\ref{(Iformachiusa1)}), we denote
with $A$ the matrix of umbral monomials such that
\begin{equation}
\Tr(A) = -1 \, \punt \, \beta \, \punt \, \left( \dot{+}_{k=1}^p \nu_k \theta_k \bar{u}_k \right).
\label{(noncentralterm)}
\end{equation}
\begin{rem}{\it  Central Wishart distribution.} \label{rem1}
When $M=0$ in (\ref{(wishnoncentral)}), then $W(n) = \widehat{W}(n) =  X_{(1)}^\dag X_{(1)} + \cdots + X_{(n)}^\dag X_{(n)}$
with $\{X_{(1)}^\dag X_{(1)}, \ldots, X_{(n)}^\dag X_{(n)}\}$ i.i.d. random matrices of order $p.$
We write $\widehat{W}(1)$ to denote one of the random matrices $X_{(k)}^\dag X_{(k)}.$ From Theorem \ref{Wifund1},
if $b_{kk}=0$ for $k=1,2,\ldots,p$ then $\nu_1 \equiv \cdots \equiv \nu_p \equiv \varepsilon,$ and
\begin{equation}
\Tr[\widehat{W}(n)] \equiv n \, \punt \, ({+}_{k=1}^p \theta_k \bar{u}_k) \equiv n \, \punt \, (\theta_1 \bar{u}_1) +
\cdots + n \, \punt \, (\theta_p \bar{u}_p).
\label{(rapcentralwish)}
\end{equation}
Then equivalence (\ref{(Iformachiusa1)}) separates the contribution of the non-centrality matrix $\Omega$
included in $A$ from the central Wishart distribution $\widehat{W}(n).$ This remark was already done in \cite{Anderson} without giving
an explicit expression to the convolution. This because the symbol in (\ref{(Iformachiusa1)}) does not have a probabilistic
counterpart. Indeed if it is true that  $n \, \punt \, ({+}_{k=1}^p \theta_k \bar{u}_k)$ corresponds to a central Wishart distribution, $\Tr(A)$ in (\ref{(noncentralterm)})
is just a formal compound Poisson r.v. \cite{Dinsen}, since its parameter $-1$ is negative.
\end{rem}
Proposition \ref{Wifund12} states the connection between central Wishart distributions (\ref{(rapcentralwish)}) and partition umbrae.
\begin{prop} \label{Wifund12}
$\Tr[\widehat{W}(n)] \equiv  n \, \punt \, \beta \, \punt \, \left( \dot{+}_{k=1}^p \varkappa_{\scriptsize{\theta_k \bar{u}_k}} \right).$
\end{prop}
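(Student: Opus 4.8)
The plan is to reduce the claimed equivalence to the central representation (\ref{(rapcentralwish)}) already obtained in Remark~\ref{rem1}, exploiting the fact that the partition umbra $\beta \punt (\cdot)$ and the cumulant umbra $\varkappa_{(\cdot)}$ act as mutually inverse symbolic operations. The engine of the argument is the auxiliary identity $\beta \punt \varkappa_\alpha \equiv \alpha$, valid for an arbitrary umbra $\alpha$; once this is available, I would push it through the disjoint sum with (\ref{(dotsum)}) and then wrap everything inside the dot product $n \punt (\cdot)$.

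First I would establish $\beta \punt \varkappa_\alpha \equiv \alpha$ at the level of generating functions. Since $\varkappa_\alpha \equiv \chi \punt \alpha$ with $f(\chi,z) = 1+z$, relation (\ref{(dotproduct)}) gives $f(\varkappa_\alpha, z) = 1 + \log f(\alpha,z)$; applying (\ref{(dotproduct)}) once more with the Bell umbra $\beta$, whose g.f. is $f(\beta,z) = \exp[e^z-1]$, yields
\[
f(\beta \punt \varkappa_\alpha, z) = f\big(\beta, \log f(\varkappa_\alpha,z)\big) = \exp\big( e^{\log(1 + \log f(\alpha,z))} - 1 \big) = \exp\big(\log f(\alpha,z)\big) = f(\alpha,z),
\]
so that $\beta \punt \varkappa_\alpha \equiv \alpha$. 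This is precisely the statement that the partition umbra undoes the passage to cumulants.

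With this in hand, the remaining steps are bookkeeping. Distributing the partition umbra over the disjoint sum by the $p$-fold version of (\ref{(dotsum)}) gives $\beta \punt \big( \dot{+}_{k=1}^p \varkappa_{\theta_k \bar{u}_k} \big) \equiv \sum_{k=1}^p \beta \punt \varkappa_{\theta_k \bar{u}_k}$, and the auxiliary identity applied term by term with $\alpha = \theta_k \bar{u}_k$ collapses the right-hand side to $\theta_1 \bar{u}_1 + \cdots + \theta_p \bar{u}_p$. Grouping the proposition's right-hand side as $n \punt \big[ \beta \punt \big( \dot{+}_{k=1}^p \varkappa_{\theta_k \bar{u}_k} \big) \big]$ by associativity of the dot product, and using that by (\ref{(sum3)}) the moments of $n \punt \phi$ depend only on the moments of $\phi$ (so similarity is preserved under $n \punt (\cdot)$), I would conclude $n \punt \beta \punt \big( \dot{+}_{k=1}^p \varkappa_{\theta_k \bar{u}_k} \big) \equiv n \punt ({+}_{k=1}^p \theta_k \bar{u}_k) \equiv \Tr[\widehat{W}(n)]$ via (\ref{(rapcentralwish)}). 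The only genuinely non-routine point is the auxiliary identity of the second paragraph; there the care lies in performing the nested applications of (\ref{(dotproduct)}) in the right order and recognizing the cancellation $e^{\log(1+\log f)} - 1 = \log f$, rather than in any deep difficulty.
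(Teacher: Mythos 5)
Your proof is correct and is essentially the paper's argument run in the opposite direction: the paper starts from $n \, \punt \, ({+}_{k=1}^p \theta_k \bar{u}_k)$, inserts $\beta \, \punt \, \chi \equiv u$ and invokes the additivity of cumulants to produce $\dot{+}_{k=1}^p \varkappa_{\theta_k \bar{u}_k}$, whereas you start from the right-hand side, verify $\beta \, \punt \, \varkappa_{\alpha} \equiv \alpha$ by a generating-function computation and then distribute $\beta \, \punt$ over the disjoint sum via (\ref{(dotsum)}). Both hinge on the same fact that the partition umbra inverts the cumulant umbra, so the substance is the same; your only extra care (correctly handled) is that replacing each $\beta \, \punt \, \varkappa_{\theta_k \bar{u}_k}$ by the similar umbra $\theta_k \bar{u}_k$ inside the ordinary sum is legitimate because the summands are pairwise uncorrelated.
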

\begin{proof}
Since $\beta \, \punt \, \chi \equiv u$ \cite{Bernoulli}, then $n \, \punt \, \left( {+}_{k=1}^p \theta_k \bar{u}_k \right) \equiv n \, \punt  \, \beta \, \punt  \, \chi \, \punt \, \left( {+}_{k=1}^p \theta_k \bar{u}_k \right).$ 
The result follows from Theorem \ref{Wifund1} by recalling the additivity property of cumulants
$$\chi \, \punt \, \left( {+}_{k=1}^p \theta_k \bar{u}_k \right) \equiv \dot{+}_{i=1}^p \chi \, \punt \, (\theta_k \bar{u}_k ) \equiv \dot{+}_{i=1}^p \varkappa_{\theta_k \bar{u}_k}.$$
\end{proof}
\begin{thm}[Sheffer polynomial sequence\footnote{In terms of g.f.'s, Sheffer polynomial sequences $\{s_i(x)\}$ are such that $\sum_{i \geq 1} s_i(x) \frac{z^i}{i!} =g(z) \exp [x f(z)]$ with $g(z)$ and $f(z)$ formal power series such that $g(0)=1$ and $f(0)=0.$ For further references on Sheffer polynomial sequences see \cite{Niederhausen}.}] \label{1c}
The sequence $\{E[\Tr(W(n))^k]\}$ is of Sheffer polynomial type with $x$ replaced by the nonnegative integer $n.$
\end{thm}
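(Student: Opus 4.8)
The plan is to recognize $\Tr[W(n)]$ as a Sheffer polynomial umbra $\alpha + x\punt\beta\punt\gamma^{\cop}$ in the sense of \cite{Nied}, with the indeterminate $x$ specialized to the integer $n$. First I would use Theorem \ref{Wifund1} together with definition (\ref{(noncentralterm)}) to read off the splitting $\Tr[W(n)] \equiv \Tr(A) + \Tr[\widehat{W}(n)]$, where $\Tr(A) = -1\punt\beta\punt(\dot{+}_{k=1}^p\nu_k\theta_k\bar{u}_k)$ gathers the whole non-centrality contribution and $\Tr[\widehat{W}(n)]$ is the central part (\ref{(rapcentralwish)}). The crucial observation is that $\Tr(A)$ carries no dependence on $n$ (its only parameters are the $b_{kk}$, the $\theta_k$, and the fixed dot-factor $-1$), so it is the natural candidate for the umbra $\alpha$; all the $n$-dependence sits in the central term.

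Next I would invoke Proposition \ref{Wifund12} to rewrite the central part as $\Tr[\widehat{W}(n)] \equiv n\punt\beta\punt(\dot{+}_{k=1}^p\varkappa_{\theta_k\bar{u}_k})$, which is already of the associated-sequence shape $n\punt\beta\punt\gamma^{\cop}$ upon setting $\gamma^{\cop} \equiv \dot{+}_{k=1}^p\varkappa_{\theta_k\bar{u}_k}$. Collecting the two pieces yields $\Tr[W(n)] \equiv \alpha + n\punt\beta\punt\gamma^{\cop}$, exactly the Sheffer polynomial umbra evaluated at $x=n$; by the characterization in \cite{Nied} the sequence $\{E[\Tr(W(n))^k]\}$ is then of Sheffer type in $n$. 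The one genuine hypothesis to be discharged is that $\gamma^{\cop}$ admits a compositional inverse $\gamma$, equivalently $E[\gamma^{\cop}]\neq 0$. Since $\bar{u}$ represents $\{i!\}$, the first $\theta_k\bar{u}_k$-cumulant equals $\theta_k$, so the first moment of the disjoint sum is $E[\dot{+}_{k=1}^p\varkappa_{\theta_k\bar{u}_k}] = \theta_1+\cdots+\theta_p = \Tr(\Sigma)$; as $\Sigma$ is a full-rank Hermitian covariance, $\Tr(\Sigma)>0$ and the inverse exists. This same nondegeneracy forces each Sheffer polynomial to have degree exactly $k$ in $n$, so the $s_k(n)$ form a bona fide polynomial sequence rather than degenerating.

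The part that deserves care is not computational but structural: in $\Tr(A)$ and in $\Tr[\widehat{W}(n)]$ the symbols $\theta_k\bar{u}_k$ formally recur, so one must check that the two contributions behave as \emph{uncorrelated} umbrae, i.e. that their generating functions multiply, as the Sheffer form $g(z)\exp[x f(z)]$ requires. The cleanest way to settle this is to fall back on the explicit m.g.f. factorization established inside the proof of Theorem \ref{Wifund1}, namely $f(\Tr[W(n)],z) = \exp\{-\Tr[(I_p - z\Sigma)^{-1}\Sigma\Omega z]\}\,[\det(I_p - z\Sigma)]^{-n}$: the first factor is precisely $g(z)=f(\Tr(A),z)$ with $g(0)=1$ and free of $n$, while the second is $\exp[n\,h(z)]$ with $h(z)=-\log\det(I_p-z\Sigma)=\sum_{i\geq1}\Tr(\Sigma^i)z^i/i$ and $h(0)=0$. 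This factorization simultaneously confirms the required uncorrelation and exhibits the $g(z)\exp[n\,h(z)]$ structure of the Sheffer definition, after which the matching is immediate.
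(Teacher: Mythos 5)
Your proposal is correct and follows essentially the same route as the paper: both identify $\alpha$ with $-1\punt\beta\punt\bigl(\dot{+}_{k=1}^p\nu_k\theta_k\bar{u}_k\bigr)$ and $\gamma^{\cop}$ with $\dot{+}_{k=1}^p\varkappa_{\theta_k\bar{u}_k}$ via Theorem \ref{Wifund1} and Proposition \ref{Wifund12}, and both discharge the compositional-inverse hypothesis by noting $E[\dot{+}_{k=1}^p\varkappa_{\theta_k\bar{u}_k}]=\Tr(\Sigma)\neq 0$. Your added check via the m.g.f.\ factorization is a sensible extra verification but does not change the argument.
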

\begin{proof}
The umbra $\alpha + x \, \punt \, \beta \, \punt \, \gamma^{\cop}$ is a Sheffer umbra \cite{Nied}. The result follows from Theorem \ref{Wifund1} and Proposition \ref{Wifund12}, choosing as umbra $\alpha$ the umbra  $-1 \punt \, \beta \punt \left( \dot{+}_{k=1}^p \nu_k \theta_k \bar{u}_k \right)$ and as umbra $\gamma^{\cop}$ the umbra  $\dot{+}_{k=1}^p \varkappa_{\scriptsize{\theta_k \bar{u}_k}},$ given in Proposition \ref{Wifund12}. Observe that this umbra has compositional inverse since $E[\dot{+}_{k=1}^p \varkappa_{\scriptsize{\theta_k \bar{u}_k}}]=\Tr(\Sigma) \ne 0.$
\end{proof}
If $b_{ii}=0$ for $i=1,2,\ldots,p,$ then $-1 \punt \, \beta \, \punt \left( \dot{+}_{k=1}^p \nu_k \theta_k \bar{u}_k \right) \equiv \varepsilon$ and $\{E[\Tr(W(n))^k]\}$ is a binomial sequence:
$E( \Tr[\widehat{W}(n+m)]^i) = \sum_{k=0}^i \binom{i}{k} E (\Tr[\widehat{W}(n)]^i)$ $E (\Tr[\widehat{W}(m)]^{i-k}).$
\begin{cor} [Binomial polynomial sequence] \label{2c}
The sequence $\{E(\Tr[\widehat{W}(n)]^k) \}$ is the binomial sequence associated to $\{E(\Tr[W(n)]^k)\}.$
\end{cor}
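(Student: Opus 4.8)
The plan is to read the statement directly off the Sheffer representation established in Theorem \ref{1c}, combined with the umbral characterisation of the associated binomial sequence recalled in Section 2. First I would recall from the proof of Theorem \ref{1c} that $\{E[\Tr(W(n))^k]\}$ is umbrally represented by the Sheffer umbra $\alpha + n \, \punt \, \beta \, \punt \, \gamma^{\cop}$, where $x$ has been specialised to the integer $n$, the umbra $\alpha = -1 \, \punt \, \beta \, \punt \, \left( \dot{+}_{k=1}^p \nu_k \theta_k \bar{u}_k \right)$ absorbs the whole contribution of the non-centrality matrix $\Omega$, and $\gamma^{\cop} = \dot{+}_{k=1}^p \varkappa_{\theta_k \bar{u}_k}$ depends only on the eigenvalues $\theta_1, \ldots, \theta_p$ of $\Sigma$.

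Next I would use the fact, recalled in Section 2, that the associated sequence of a Sheffer sequence is obtained by replacing its $\alpha$-component with the augmentation umbra $\varepsilon$: the Sheffer umbra $\alpha + x \, \punt \, \beta \, \punt \, \gamma^{\cop}$ then collapses to the binomial-type umbra $x \, \punt \, \beta \, \punt \, \gamma^{\cop}$ carrying the \emph{same} $\gamma^{\cop}$. The key step is to recognise that the central trace is exactly this umbra: by Proposition \ref{Wifund12} one has $\Tr[\widehat{W}(n)] \equiv n \, \punt \, \beta \, \punt \, \left( \dot{+}_{k=1}^p \varkappa_{\theta_k \bar{u}_k} \right) = n \, \punt \, \beta \, \punt \, \gamma^{\cop}$, again with $x$ specialised to $n$. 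Hence the umbra representing $\{E(\Tr[\widehat{W}(n)]^k)\}$ coincides with the associated binomial umbra of the Sheffer umbra representing $\{E(\Tr[W(n)]^k)\}$, and the binomial convolution identity displayed just before the statement is precisely the defining property of that associated sequence.

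I expect no genuine obstacle: once Theorem \ref{1c} and Proposition \ref{Wifund12} are available, the result reduces to matching the two compositional-inverse umbrae and invoking the definition of the associated sequence. The only point that deserves a line of care is confirming that the $\gamma^{\cop}$ occurring in the Sheffer representation of the non-central case is literally the same umbra as the one in Proposition \ref{Wifund12}; this holds because $\gamma^{\cop} = \dot{+}_{k=1}^p \varkappa_{\theta_k \bar{u}_k}$ is built solely from $\Sigma$ and is unaffected by setting $\Omega = 0$ (equivalently $b_{kk}=0$). This is nothing but the separation, noted in Remark \ref{rem1} for equivalence (\ref{(Iformachiusa1)}), of the central part governed by $\gamma^{\cop}$ from the non-centrality part absorbed into $\alpha$.
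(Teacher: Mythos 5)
Your argument is correct and follows exactly the route the paper takes: the paper's own justification is the sentence preceding the corollary, which observes that setting $b_{ii}=0$ collapses the $\alpha$-component of the Sheffer umbra from Theorem \ref{1c} to $\varepsilon$, leaving the binomial-type umbra $n \, \punt \, \beta \, \punt \, \left( \dot{+}_{k=1}^p \varkappa_{\theta_k \bar{u}_k} \right)$ of Proposition \ref{Wifund12}. Your added remark that $\gamma^{\cop}$ is built solely from $\Sigma$ and hence unchanged between the central and non-central cases is exactly the separation the paper relies on via Remark \ref{rem1}.
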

Since the m.g.f. of $\Tr[\widehat{W}(n)]$ is convergent in a suitable neighborhood of $0,$ the distribution of $\Tr[\widehat{W}(n)]$ is univocally determined by  its moments.
Thanks to the latter equivalence in (\ref{(distrib)}), with the umbra $\alpha$ replaced by $+_{k=1}^p \theta_k \bar{u}_k,$
a further consequence of Corollary \ref{2c} is the following result.
\begin{cor} \label{0c}
$\Tr[\widehat{W}(n+m)] \, \stackrel{d}{=} \, \Tr[\widehat{W}(n)] + \Tr[\widehat{W}(m)].$
\end{cor}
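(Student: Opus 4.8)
The plan is to read off the equality in distribution from the symbolic representation of the central Wishart trace, exactly as the remark preceding the statement suggests. The algebraic engine is the distributive law $(n+m) \, \punt \, \alpha \equiv n \, \punt \, \alpha + m \, \punt \, \alpha,$ recorded as the latter equivalence in (\ref{(distrib)}); this is the very same identity that produced the binomial sequence of Corollary \ref{2c}, used here in its additive rather than its binomial guise. First I would rewrite the left-hand side by means of (\ref{(rapcentralwish)}) with $n$ replaced by $n+m,$ obtaining
\[
\Tr[\widehat{W}(n+m)] \equiv (n+m) \, \punt \, \left( {+}_{k=1}^p \theta_k \bar{u}_k \right).
\]

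Next I would apply the latter equivalence in (\ref{(distrib)}) with the umbra $\alpha$ taken to be ${+}_{k=1}^p \theta_k \bar{u}_k,$ so that the integer summand splits as
\[
(n+m) \, \punt \, \left( {+}_{k=1}^p \theta_k \bar{u}_k \right) \equiv n \, \punt \, \left( {+}_{k=1}^p \theta_k \bar{u}_k \right) + m \, \punt \, \left( {+}_{k=1}^p \theta_k \bar{u}_k \right).
\]
Invoking (\ref{(rapcentralwish)}) once more, the two summands are umbrally $\Tr[\widehat{W}(n)]$ and $\Tr[\widehat{W}(m)],$ whence $\Tr[\widehat{W}(n+m)] \equiv \Tr[\widehat{W}(n)] + \Tr[\widehat{W}(m)].$ I would emphasize that the summation on the right denotes two uncorrelated, disjoint-support copies of these dot products, which is precisely the umbral transcription of independence of the two central Wishart traces.

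It remains to promote this umbral equivalence, which by itself only asserts the coincidence of the whole moment sequences, to a genuine equality in distribution; this is the single step that is not purely formal. Here I would use the observation recorded just before the statement, namely that the m.g.f. of $\Tr[\widehat{W}(n)]$ converges in a neighborhood of $0,$ so that the distribution of $\Tr[\widehat{W}(n)]$ is uniquely determined by its moments. Combining this moment determinacy with the independence reading of the right-hand sum upgrades $\equiv$ to $\stackrel{d}{=}$ and yields the claim. I do not anticipate any real difficulty beyond justifying this moment-to-distribution passage; the combinatorial heart of the argument is the single line of distributivity borrowed from (\ref{(distrib)}).
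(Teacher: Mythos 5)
Your proposal is correct and follows exactly the route the paper indicates: the latter equivalence in (\ref{(distrib)}) applied with $\alpha$ replaced by ${+}_{k=1}^p \theta_k \bar{u}_k$, followed by the moment-determinacy argument from the convergent m.g.f. to pass from umbral equivalence to equality in distribution. Nothing is missing.
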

Proposition \ref{(sim)} gives a Sheffer identity (cf. \cite{Niederhausen}) for $\Tr[{W}(n)].$
\begin{prop} \label{(sim)}
$\Tr[W(n+m, \Sigma, M)] \, \stackrel{d}{=} \, \Tr[W(n, \Sigma, M)] + \Tr[\widehat{W}(m)].$
\end{prop}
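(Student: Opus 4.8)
The plan is to read the identity straight off the closed-form representation of Theorem \ref{Wifund1}, exploiting the structural fact that the non-centrality contribution $\Tr(A)$ of (\ref{(noncentralterm)}) depends only on $\Sigma$ and $M$ (equivalently on $\Omega=\Sigma^{-1}M$) and not on the sample size, so it is the \emph{same} umbral object for $W(n+m,\Sigma,M)$ and for $W(n,\Sigma,M)$; the sample size enters only through the central part $n \punt \left( {+}_{k=1}^p \theta_k \bar{u}_k \right)$. This decoupling of the sample-size-independent non-centrality correction from the sample-size-linear central term is the conceptual heart of the statement.

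Concretely, first I would write, via (\ref{(Iformachiusa1)}) and (\ref{(noncentralterm)}), the representation $\Tr[W(n+m,\Sigma,M)] \equiv \Tr(A) + (n+m) \punt \left( {+}_{k=1}^p \theta_k \bar{u}_k \right)$. Setting $\alpha = {+}_{k=1}^p \theta_k \bar{u}_k$, I would then apply the second equivalence in (\ref{(distrib)}), namely $(n+m)\punt\alpha \equiv n\punt\alpha + m\punt\alpha$, to split the central part into two blocks of $n$ and $m$ similar copies of $\alpha$. Regrouping the first block with $\Tr(A)$ yields, again by Theorem \ref{Wifund1}, an umbra similar to $\Tr[W(n,\Sigma,M)]$, while the second block is, by (\ref{(rapcentralwish)}) in Remark \ref{rem1}, an umbra similar to $\Tr[\widehat{W}(m)]$.

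The delicate point---and the only thing separating $\stackrel{d}{=}$ from a bare similarity---is that the two summands on the right must have disjoint supports, so that their sum represents the \emph{convolution} of the two distributions and not merely a formal sum of moments. I expect this uncorrelation check to be the main obstacle to state cleanly. It holds because the $n+m$ copies of $\alpha$ produced by the dot product are by definition uncorrelated umbrae, so the $n$-block and the $m$-block are uncorrelated; moreover $\Tr(A)$, arising from the numerator of the m.g.f.\ (\ref{gfomega}), is uncorrelated with every copy of $\alpha$, arising from its denominator. Hence the block $\Tr(A)+n\punt\alpha \equiv \Tr[W(n,\Sigma,M)]$ and the block $m\punt\alpha \equiv \Tr[\widehat{W}(m)]$ are uncorrelated, and the claimed equality in distribution follows.

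Equivalently, and more conceptually, this is exactly the Sheffer identity for the Sheffer sequence $\{E[\Tr(W(n))^k]\}$ of Theorem \ref{1c}. Writing the Sheffer umbra as $\alpha + x \punt \beta \punt \gamma^{\cop}$, the same application of (\ref{(distrib)}) to $x\mapsto n+m$ gives $\alpha + (n+m)\punt\beta\punt\gamma^{\cop} \equiv (\alpha + n\punt\beta\punt\gamma^{\cop}) + m\punt\beta\punt\gamma^{\cop}$ with uncorrelated summands, and by Corollary \ref{2c} (via Proposition \ref{Wifund12}) the associated binomial sequence $m\punt\beta\punt\gamma^{\cop}$ is precisely $\Tr[\widehat{W}(m)]$, recovering the identity. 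The arithmetic of dot products being otherwise routine, the proof reduces to the bookkeeping of supports described above.
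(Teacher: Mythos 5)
Your proof is correct and follows essentially the same route as the paper: the paper's own argument is exactly your second paragraph, namely writing $\Tr[W(n+m)]$ via Theorem \ref{Wifund1}, splitting $(n+m) \punt \left(+_{k=1}^p \theta_k \bar{u}_k\right)$ with the second equivalence in (\ref{(distrib)}), and regrouping. Your explicit attention to the disjointness of supports (so that the similarity upgrades to equality in distribution, i.e.\ a genuine convolution) is a point the paper leaves implicit, and your Sheffer-identity reformulation is just the same computation in different clothing.
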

\begin{proof}
The result follows from equivalence (\ref{(rapcentralwish)}) by observing that
\begin{eqnarray*}
\Tr[W(n+m)] & \equiv & -1 \punt \, \beta \, \punt \left( \dot{+}_{k=1}^p \nu_k \theta_k \bar{u}_k \right) + (n+m) \punt  \left(+_{k=1}^p  \theta_k \bar{u}_k
\right)\\
& \equiv&  \left[ -1 \punt \, \beta \, \punt  \left( \dot{+}_{k=1}^p \nu_k \theta_k \bar{u}_k \right) + n \punt \left(+_{k=1}^p  \theta_k \bar{u}_k
\right) \right] + m \punt \left(+_{k=1}^p  \theta_k \bar{u}_k \right).
\end{eqnarray*}
\end{proof}
\begin{prop} \label{(sim1bis)}
If $M = M_1 + M_2$ with $M_1=\sum_{i=1}^n \boldsymbol{m}_i^\dag \boldsymbol{m}_i$ and $M_2 = \sum_{i=n+1}^{n+m} \boldsymbol{m}_i^\dag
\boldsymbol{m}_i,$ then $\Tr[W(n+m, \Sigma, M)] \stackrel{d}{=} \Tr[W(n,\Sigma, M_1)] + \Tr[W(m,\Sigma, M_2)].$
\end{prop}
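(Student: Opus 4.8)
The plan is to reduce the statement to the additivity of the non-centrality term $\Tr(A)$ in (\ref{(noncentralterm)}), after which the central contribution splits exactly as in Proposition~\ref{(sim)}. The decisive observation is that $M \mapsto \Omega = \Sigma^{-1} M$ is linear: writing $\Omega_j = \Sigma^{-1} M_j$ for $j=1,2$ gives $\Omega = \Omega_1 + \Omega_2$. Since in Theorem~\ref{Wifund1} the umbra $\dot{+}_{k=1}^p \nu_k \theta_k \bar{u}_k$ enters only through the power sums $\mathcal{S}_i = \Tr(\Omega \, \Sigma^i)$ recorded after (\ref{(ccc11)}), this linearity is precisely what drives the proof.

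First I would expand
\begin{equation*}
\mathcal{S}_i = \Tr(\Omega \, \Sigma^i) = \Tr(\Omega_1 \Sigma^i) + \Tr(\Omega_2 \Sigma^i) = \mathcal{S}_i^{(1)} + \mathcal{S}_i^{(2)},
\end{equation*}
where $\mathcal{S}_i^{(j)}$ is the power sum attached to $M_j.$ Because the $i$-th moment of $\dot{+}_{k=1}^p \nu_k \theta_k \bar{u}_k$ equals $i! \, \mathcal{S}_i$ by (\ref{(ccc11)}), the additivity of $\{\mathcal{S}_i\}$ translates by (\ref{(powersum)}) into the disjoint-sum decomposition
\begin{equation*}
\dot{+}_{k=1}^p \nu_k \theta_k \bar{u}_k \equiv \left( \dot{+}_{k=1}^p \nu_k^{(1)} \theta_k \bar{u}_k \right) \dot{+} \left( \dot{+}_{k=1}^p \nu_k^{(2)} \theta_k \bar{u}_k \right),
\end{equation*}
with $\nu_k^{(j)} = \chi \punt b_{kk}^{(j)} \punt \beta$ built from the diagonal of $B_j = Q^\dag \Omega_j Q.$ Applying the partition-umbra additivity (\ref{(dotsum)}) to $-1 \punt \beta$ then gives $\Tr(A) \equiv \Tr(A_1) + \Tr(A_2),$ where $A_j$ is the non-centrality term associated to $M_j.$

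Finally I would assemble the pieces. Splitting the central part by the second relation in (\ref{(distrib)}) gives
\begin{equation*}
(n+m) \punt \left( {+}_{k=1}^p \theta_k \bar{u}_k \right) \equiv n \punt \left( {+}_{k=1}^p \theta_k \bar{u}_k \right) + m \punt \left( {+}_{k=1}^p \theta_k \bar{u}_k \right),
\end{equation*}
and regrouping the four summands of $\Tr[W(n+m,\Sigma,M)]$ yields $[ \Tr(A_1) + n \punt ( {+}_{k=1}^p \theta_k \bar{u}_k ) ] + [ \Tr(A_2) + m \punt ( {+}_{k=1}^p \theta_k \bar{u}_k ) ]$, that is $\Tr[W(n,\Sigma,M_1)] + \Tr[W(m,\Sigma,M_2)].$ I expect the only delicate point to be the disjoint-sum decomposition above: one must check that equality of \emph{all} moments $\mathcal{S}_i = \mathcal{S}_i^{(1)} + \mathcal{S}_i^{(2)}$ legitimately replaces a single auxiliary umbra by the disjoint sum of two, and that the partition umbra distributes over $\dot{+}$ even with the negative index $-1$; both are consequences of the defining moment formula (\ref{(powersum)}) and of (\ref{(dotsum)}) as already used in the proof of Theorem~\ref{Wifund1}.
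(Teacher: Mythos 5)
Your argument is correct and its skeleton is the paper's: invoke Theorem~\ref{Wifund1}, split the non-centrality term additively in $M$, split the central term with the second relation in (\ref{(distrib)}), and regroup. The one genuine difference is the mechanism of the non-central split. The paper splits each umbral monomial $\nu_k$ itself, claiming $\chi \punt (b_{kk}+\tilde{b}_{kk}) \punt \beta \equiv \chi \punt b_{kk} \punt \beta + \chi \punt \tilde{b}_{kk} \punt \beta$ via the umbral extension of (\ref{(distrib)}); you never touch the individual $\nu_k$ and instead decompose the auxiliary umbra $\dot{+}_{k=1}^p \nu_k \theta_k \bar{u}_k$ as a disjoint sum, justified solely by the additivity $\mathcal{S}_i = \Tr(\Omega_1\Sigma^i)+\Tr(\Omega_2\Sigma^i)$ together with the defining property (\ref{(powersum)}) of $\dot{+}$, and only then distribute $-1 \punt \beta \punt$ over $\dot{+}$ via (\ref{(dotsum)}). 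Your route is the safer of the two: an ordinary sum of two uncorrelated umbrae whose moment sequences are the constants $b_{kk}^{(1)}$ and $b_{kk}^{(2)}$ does not have constant moments $b_{kk}^{(1)}+b_{kk}^{(2)}$ beyond the first (cross terms $b_{kk}^{(1)}b_{kk}^{(2)}$ appear in $E[(\tilde{\nu}_k+\bar{\nu}_k)^i]$ for $i\geq 2$), so the splitting really must happen inside the disjoint sum, where only the individual $i$-th moments $i!\,\mathcal{S}_i$ are recorded --- precisely the ``delicate point'' you flag and resolve. The remaining steps (the central split and the final regrouping into $\Tr[W(n,\Sigma,M_1)]+\Tr[W(m,\Sigma,M_2)]$) coincide with the paper's, and the passage from moment equivalence to equality in distribution is covered by the moment-determinacy remark following Corollary~\ref{2c}.
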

\begin{proof}
Assume $\Tr(M_1) = b_{11} + \cdots + b_{pp}$ and $\Tr(M_2) = \tilde{b}_{11} + \cdots + \tilde{b}_{pp}$ with $\Tr(M_1) + \Tr(M_2) = \Tr(M).$  The result follows from
Theorem \ref{Wifund1} by observing that the umbral monomials $\nu_k$ in (\ref{(umbni)}) can be splitted in the summation of the umbral monomials $\tilde{\nu}_k \equiv
(\chi \punt \, b_{kk} \punt \, \beta)$ and $\bar{\nu}_k \equiv (\chi \,\punt \tilde{b}_{kk} \, \punt \beta),$ since $\chi \punt \, (b_{kk}+\tilde{b}_{kk}) \punt \, \beta \equiv
(\chi \punt \, b_{kk}+ \chi \punt \, \tilde{b}_{kk}) \punt \, \beta \equiv \chi \punt \, b_{kk} \punt \, \beta + \chi \punt \, \tilde{b}_{kk} \punt \, \beta.$
This last equivalence is a consequence of (\ref{(distrib)}) which holds also when $n$ and $m$ are replaced by some umbrae $\delta$ and $\gamma,$ cf. \cite{Dinardoeurop}.
 \end{proof}
One feature of the umbral calculus is the chance to replace the nonnegative integer $n$ in the dot-product $n \punt \, \alpha$
with an umbra. Then the umbral counterpart of a randomized non-central Wishart distribution $W(N) = W_p(N, \Sigma, M)$
can be recovered from Theorem 1, with
\begin{equation}
W(N) = \sum_{i=1}^N (X_{(i)} - \boldsymbol{m})^\dag (X_{(i)} - \boldsymbol{m}),
\label{(rnwd)}
\end{equation}
$\boldsymbol{m}$ a vector of complex numbers and $M = \boldsymbol{m}^\dag  \boldsymbol{m}.$

\begin{prop} \label{(sim1)}
If $N$ is a r.v. with moment sequence umbrally represented by an umbra $\alpha,$ then $\{E(\Tr[W(N)]^k)\}$ is
umbrally represented by
$$\Tr[W( \alpha )] \equiv \alpha \, \punt \, \left[  -1 \punt \, \beta \, \punt \, \left( \dot{+}_{k=1}^p \nu_k \theta_k \bar{u}_k  \right) + \left( {+}_{i=1}^p \theta_i \bar{u}_i \right) \right].$$
\end{prop}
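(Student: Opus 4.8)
The plan is to reduce the randomized distribution to the deterministic case of Theorem~\ref{Wifund1} by first conditioning on $N$ and then replacing the integer $n$ by the umbra $\alpha$ inside a dot-product. The decisive preliminary observation concerns (\ref{(rnwd)}): since all the mean vectors coincide with $\boldsymbol{m}$, the non-centrality matrix of $W(n)$ is $\sum_{i=1}^n \boldsymbol{m}^\dag \boldsymbol{m} = n \, \boldsymbol{m}^\dag \boldsymbol{m} = n M$. Substituting $n M$ for $M$ in the m.g.f.\ (\ref{gfomega}) turns the non-central exponential factor into an $n$-th power as well, so that
$$f(\Tr[W(n)], z) = \left( \frac{\exp\{- \Tr[(I_p - z \, \Sigma)^{-1} M \, z]\}}{\det{(I_p - z \, \Sigma)}} \right)^n = [f(\Tr[W(1)], z)]^n.$$

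Next I would name the umbra inside the bracket of the statement. Writing $n = 1$ in Theorem~\ref{Wifund1} and recalling $M = \boldsymbol{m}^\dag \boldsymbol{m}$ from (\ref{(rnwd)}), the umbra
$$\xi \equiv -1 \punt \, \beta \, \punt \, \left( \dot{+}_{k=1}^p \nu_k \theta_k \bar{u}_k \right) + \left( {+}_{i=1}^p \theta_i \bar{u}_i \right)$$
represents $\Tr[W(1)]$, i.e.\ $f(\xi, z) = f(\Tr[W(1)], z)$. The factorization above then reads $[f(\xi, z)]^n = f(\Tr[W(n)], z)$, that is, $\Tr[W(n)] \equiv n \punt \, \xi$: for a deterministic integer $n$ the trace of the randomized Wishart is the sum of $n$ uncorrelated umbrae similar to $\xi$, and $\alpha \punt \, \xi$ is precisely the bracket in the statement.

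Finally I would randomize. Conditioning on $N$ gives $E(\Tr[W(N)]^k) = E_N\{ E(\Tr[W(n)]^k \mid N = n) \} = E_N\{ E[(n \punt \, \xi)^k] \}$, so it suffices to show that the generalized dot-product $\alpha \punt \, \xi$ obtained by the replacement $n \mapsto \alpha$ reproduces this mixture coefficient-wise. By (\ref{(dotproduct)}), $f(\alpha \punt \, \xi, z) = f(\alpha, \log[f(\xi, z)])$; putting $w = \log[f(\xi, z)]$ and using $f(n \punt \, \xi, z) = \exp(n w)$ together with the hypothesis that the moments of $N$ are represented by $\alpha$, one gets
$$E_N[f(n \punt \, \xi, z)] = E_N[\exp(N w)] = \sum_{k \geq 0} E[N^k] \, \frac{w^k}{k!} = f(\alpha, w) = f(\alpha \punt \, \xi, z).$$
Comparing coefficients of $z^k$ yields $E(\Tr[W(N)]^k) = E[(\alpha \punt \, \xi)^k]$, which is the asserted representation $\Tr[W(\alpha)] \equiv \alpha \punt \, \xi$.

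The step I expect to be the crux is the first one: one must recognize that equal mean vectors force the non-centrality to scale as $M_n = n M$, so that the \emph{entire} m.g.f.\ of $W(n)$ — and not merely its determinantal part — becomes a pure $n$-th power of $f(\Tr[W(1)], z)$. It is exactly this that produces the clean identity $\Tr[W(n)] \equiv n \punt \, \xi$ and thereby allows the randomization to be effected by the single substitution $n \mapsto \alpha$; the remainder is a routine coefficient-wise transcription of the dot-product rule (\ref{(dotproduct)}) and of the conditioning identity for mixtures.
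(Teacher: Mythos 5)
Your proposal is correct and follows essentially the same route as the paper: both identify $\Tr[W(N)]$ as a compound (random) sum whose i.i.d.\ summands are represented by the bracketed umbra $\xi \equiv \Tr[W(1)]$, and then realize the randomization $n \mapsto \alpha$ through the dot-product rule $f(\alpha \punt \xi, z) = f(\alpha, \log f(\xi,z))$. The paper simply cites the fact that the umbral counterpart of a random sum $S_N$ is $\alpha \punt \gamma$, whereas you verify it coefficient-wise via the generating-function factorization $f(\Tr[W(n)],z) = [f(\Tr[W(1)],z)]^n$ (which indeed hinges on the non-centrality scaling as $nM$); this is the same argument in expanded form.
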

\begin{proof}
The m.g.f. of a random sum $S_N = X_1 + X_2 + \cdots + X_N$ is the composition of the m.g.f. $g(z)$ corresponding to the index $N$ and of the m.g.f. $h(z)$ corresponding to $\{X_i\},$ weighted by the $\log$ function.  The umbral counterpart of $S_N$ is the dot product $\alpha \, \punt \, \gamma,$ where $\alpha$ represents the sequence of moments of $N$ and $\gamma$ represents the sequence of moments of $\{X_i\}.$ From (\ref{(rnwd)}) $\Tr[W(N)] = \sum_{i=1}^N \Tr[(X_{(i)} - \boldsymbol{m})^\dag (X_{(i)} - \boldsymbol{m})]$, so that the sequence $\{E(\Tr[W(N)^k])\}$ is represented by the dot product $\alpha \, \punt \, \gamma,$ with $\alpha$ the umbral counterpart of $N$ and $\gamma$ the umbral counterpart of $\Tr[(X_{(i)} - \boldsymbol{m})^\dag (X_{(i)} - \boldsymbol{m})].$ The result follows from (\ref{(Iformachiusa1)}) and Remark \ref{rem1} since $\gamma \equiv  {+}_{i=1}^p \theta_i \bar{u}_i.$
\end{proof}
\subsection{Moments and cumulants of Wishart distributions}
\subsubsection{Central distributions.} Moments of central Wishart distributions can be expressed by means of
complete homogeneous symmetric polynomials $\{h_i\}$ given in (\ref{(omo)}). Indeed,  from (\ref{(omo)})
and equivalence (\ref{(rapcentralwish)}) we have $E(\Tr[\widehat{W}(1)]^i) = i! \, h_i(\theta_1, \ldots, \theta_p)$ and the generalization to $\Tr[\widehat{W}(n)]$ follows by using the
dot-product with $n.$ From (\ref{(sum3)}) we have
\begin{equation}
E(\Tr[\widehat{W}(n)]^i) = \sum_{\lambda \vdash i} (n)_{l(\lambda)} \, \tilde{d}_{\lambda} \, h_{\lambda}(\theta_1, \ldots, \theta_p) \,\,
\hbox{with  $h_{\lambda}= h_1^{r_1} h_2^{r_2} \cdots$ and $\tilde{d}_{\lambda} = \displaystyle{\frac{i!}{r_1! r_2! \cdots}}.$}
\label{(momwishcent)}
\end{equation}
\begin{rem}\label{cyclic} {\sl Cyclic polynomials.}  Thanks to (\ref{(momwishcent)}), moments of $\Tr[\widehat{W}(n)]$ can be expressed in terms of cyclic polynomials
\begin{equation}
{\mathcal C}_i(x_1, \ldots, x_i) = \sum_{\lambda \vdash i} {\mathfrak c}_{\lambda} x_1^{r_1} \cdots x_i^{r_i} \quad
\hbox{with} \quad {\mathfrak c}_{\lambda}= \frac{i!}{1^{r_1} \, r_1! \, 2^{r_2} \, r_2! \cdots}.
\label{(cyclicpolynomials)}
\end{equation}
A well-known relation between cyclic polynomials and complete homogeneous polynomials is ${\mathcal C}_i(s_1, \ldots,s_i) = i! \, h_i(x_1, \ldots,x_p),$ with $\{s_j\}$  power sum symmetric polynomials in the indeterminates $x_1, \ldots,x_p,$ cf. \cite{MacDonald}. By replacing the indeterminates $\{x_k\}$ with the eigenvalues of $\Sigma,$ we have ${\mathcal C}_i[\Tr(\Sigma), \ldots, \Tr(\Sigma^i)] = i! \, h_i(\theta_1,  \ldots, \theta_p) = E(\Tr[\widehat{W}(1)]^i)$ and from (\ref{(momwishcent)})
\begin{equation}
E(\Tr[\widehat{W}(n)]^i) = \sum_{\lambda \vdash i} (n)_{l(\lambda)} \, d_{\lambda} \, {\mathcal C}_{\lambda}[\Tr(\Sigma), \ldots, \Tr(\Sigma^i)]
\quad \hbox{ with ${\mathcal C}_{\lambda}= {\mathcal C}_1^{r_1} {\mathcal C}_2^{r_2} \cdots.$}
\label{(momwishcent1)}
\end{equation}
\end{rem}
\begin{prop} \label{Letac}
If $\{ \Tr(\Sigma^i)\}$ is umbrally represented by $\sigma,$ then
$\Tr[\widehat{W}(n)] \equiv n \, \punt \, \beta \, \punt \, (\sigma \varkappa_{\bar{u}}).$
\end{prop}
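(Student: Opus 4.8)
The plan is to reduce everything to Proposition \ref{Wifund12}, which already supplies the representation $\Tr[\widehat{W}(n)] \equiv n \punt \beta \punt \left( \dot{+}_{k=1}^p \varkappa_{\theta_k \bar{u}_k} \right)$. Since similar umbrae may be substituted inside a dot-product, it then suffices to establish the single umbral equivalence
\[
\dot{+}_{k=1}^p \varkappa_{\theta_k \bar{u}_k} \equiv \sigma \, \varkappa_{\bar{u}},
\]
because inserting it in place of the inner argument immediately yields $\Tr[\widehat{W}(n)] \equiv n \punt \beta \punt (\sigma \varkappa_{\bar{u}})$, which is the assertion.

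First I would compute the moments of the left-hand umbra. By (\ref{(powersum)}), the $i$-th moment of $\dot{+}_{k=1}^p \varkappa_{\theta_k \bar{u}_k}$ is $\sum_{k=1}^p E[\varkappa_{\theta_k \bar{u}_k}^i]$. Each $\varkappa_{\theta_k \bar{u}_k}$ is the cumulant umbra of $\theta_k \bar{u}_k$, whose g.f. is $f(\theta_k \bar{u}_k, z) = (1 - \theta_k z)^{-1}$ by (\ref{(boolean)}); hence its cumulant g.f. is $1 + \log (1 - \theta_k z)^{-1} = 1 + \sum_{i \geq 1} \theta_k^i \, z^i / i$, so that $E[\varkappa_{\theta_k \bar{u}_k}^i] = (i-1)! \, \theta_k^i$ for every positive integer $i$. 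Summing over $k$ gives $(i-1)! \sum_{k=1}^p \theta_k^i = (i-1)! \, \Tr(\Sigma^i)$.

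Next I would compute the moments of the right-hand umbra. Taking $\sigma$ and $\varkappa_{\bar{u}}$ with disjoint supports, hence uncorrelated, the uncorrelation property gives $E[(\sigma \varkappa_{\bar{u}})^i] = E[\sigma^i] \, E[\varkappa_{\bar{u}}^i] = \Tr(\Sigma^i) \cdot (i-1)!$, using the hypothesis $E[\sigma^i] = \Tr(\Sigma^i)$ together with the value $E[\varkappa_{\bar{u}}^i] = (i-1)!$ already recorded in the excerpt. The two moment sequences coincide for all $i$, so the equivalence holds and the proposition follows. The argument is essentially routine: the only point requiring care is the bookkeeping of uncorrelation, namely treating $\sigma$ as having support disjoint from $\varkappa_{\bar{u}}$ so that $E[(\sigma \varkappa_{\bar{u}})^i]$ factors, and matching the common scalar $(i-1)!$ arising both from the cumulant of the boolean unity and from the cumulant of each $\theta_k \bar{u}_k$; no deeper obstacle is expected.
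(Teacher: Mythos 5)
Your proof is correct, but it takes a genuinely different route from the paper's. The paper never invokes Proposition \ref{Wifund12}: it computes $E[(\beta \punt (\sigma \varkappa_{\bar{u}}))^i]$ directly from the partition-sum formula (\ref{comp}) with $\gamma$ the unity umbra, recognizes the resulting coefficients $d_\lambda {\mathfrak u}_\lambda$ as the cycle-class numbers ${\mathfrak c}_\lambda$, and so identifies these moments with the cyclic polynomials ${\mathcal C}_i(\Tr(\Sigma),\ldots,\Tr(\Sigma^i))$; the identity ${\mathcal C}_i(s_1,\ldots,s_i)=i!\,h_i$ from Remark \ref{cyclic} then matches them with $E(\Tr[\widehat{W}(1)]^i)$, and the power $n$ is absorbed via $f(\Tr[\widehat{W}(n)],z)=f(\Tr[\widehat{W}(1)],z)^n$. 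You instead observe that both sides have the form $n \punt \beta \punt (\cdot)$ and reduce the claim to the similarity $\dot{+}_{k=1}^p \varkappa_{\theta_k \bar{u}_k} \equiv \sigma\varkappa_{\bar{u}}$, verified by two elementary moment computations: the cumulants $(i-1)!\,\theta_k^i$ of $\theta_k\bar{u}_k$ summed over $k$ against the factorization $E[\sigma^i]E[\varkappa_{\bar{u}}^i]=\Tr(\Sigma^i)(i-1)!$. Your substitution step is legitimate because, by (\ref{comp}), the moments of $n \punt \beta \punt \gamma$ depend only on the moments of $\gamma$ (it would be worth saying this explicitly). Your argument is shorter and avoids partition sums entirely, exposing the statement as essentially additivity of cumulants plus uncorrelation; the paper's computation is longer but establishes along the way the identity $E[(\beta \punt (\alpha \varkappa_{\bar{u}}))^i]={\mathcal C}_i(a_1,\ldots,a_i)$ of (\ref{(cyclicpartition)}), which is reused in Remark \ref{cyclicBell} and in the comparison with Letac's permutation formula, so the extra work is not wasted in context.
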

\begin{proof}
From (\ref{comp}), with $\gamma$ replaced by the unity umbra $u,$ we have $E[(\beta \, \punt \, (\alpha \varkappa_{\bar{u}}))^i] = \sum_{\lambda \vdash i} d_{\lambda} \, a_{\lambda} \,
{\mathfrak u}_{\lambda}$ with ${\mathfrak u}_{\lambda} = (2-1)!^{r_2} (3-1)!^{r_3} \cdots.$ By observing that ${\mathfrak c}_{\lambda} = d_{\lambda} {\mathfrak u}_{\lambda}$
in (\ref{(cyclicpolynomials)}),  we have
\begin{equation}
E[(\beta \, \punt \, (\alpha \varkappa_{\bar{u}}))^i] = {\mathcal C}_i(a_1, \ldots, a_i).
\label{(cyclicpartition)}
\end{equation}
Then $\Tr[\widehat{W}(1)] \equiv \beta \, \punt \, (\sigma \varkappa_{\bar{u}})$ and
the result follows since $f(\Tr[\widehat{W}(n)],z) = f(\Tr[\widehat{W}(1)],z)^n.$
\end{proof}
As a consequence, the symbolic representation of a randomized central Wishart distribution is
$\alpha \, \punt \, \beta \, \punt \, (\sigma \varkappa_{\bar{u}})$ with moments given in (\ref{comp}).
\begin{rem} \label{cyclicBell} {\rm A different expression for moments of central Wishart distributions is given in \cite{Withers} where vectors and matrix exponential Bell polynomials are employed and introduced as derivatives of a function of a vector or matrix function of a vector or matrix. On the other hand there is a well-known relation between cyclic polynomials and complete Bell exponential polynomials \cite{MacDonald}, that is ${\mathcal C}_i(a_1, a_2, \ldots, a_i) = Y_i [a_1, a_2 1!, a_3 2!, \ldots, a_i (i-1)! ].$ Due to (\ref{(cyclicpartition)}), the partition umbra is the bridge between the two families of polynomials, since $E[(\beta \, \punt \, \alpha)^i]=Y_i(a_1, \ldots, a_i),$ cf. \cite{Dinsen}.}
\end{rem}
Proposition \ref{Letac} is in agreement with equation (2.4) of \cite{Letac1} where permutations\footnote{A permutation $\sigma \in {\mathfrak S}_i$ of $[i]$, with ${\mathfrak S}_i$ the symmetric group, can be decomposed into disjoint cycles. The length of the cycle $c \in C(\sigma)$ is its cardinality, denoted by ${\mathfrak l}(c)$. The number of cycles is denoted by $|C(\sigma)|$. A permutation $\sigma$ with $r_1$ $1$-cycles, $r_2$ $2$-cycles and so on is said to be of cycle class $\lambda=(1^{r_1},2^{r_2},\ldots) \vdash i.$ The integer ${\mathfrak c}_{\lambda}$
in (\ref{(cyclicpolynomials)}) denotes the number of permutations $\sigma \in {\mathfrak S}_i$ of cycle class $\lambda=(1^{r_1},2^{r_2},\ldots) \vdash i.$ When misunderstandings occur, we refer to the cycle class of a permutation $\sigma$ as $\lambda_{\sigma}.$ In particular we have $l({\lambda_{\sigma}})=|C(\sigma)|.$} are employed in place of integer partitions. Indeed indexing equation (\ref{(momwishcent1)}) in permutations,
$$E[(n \, \punt \, \beta \, \punt \, (\sigma \varkappa_{\bar{u}}))^i] = \sum_{\sigma \in {\mathfrak S}_i} n^{{\mathfrak l}(\sigma)}
\prod_{c \in C(\sigma)} {\mathcal C}_{{\mathfrak l}(c)} (s_1, \ldots, s_{{\mathfrak l}(c)})$$ and equation (2.4) of \cite{Letac1} follows for $h_1 = \cdots = h_k = I_p$ since ${\mathcal C}_i(s_1, \ldots, s_i) = \sum_{\sigma \in {\mathfrak S}_i} \prod_{c \in C(\sigma)} s_{{\mathfrak l}(c)}.$

As often happens with r.v.'s, the sequence of cumulants of central Wishart distributions is more manageable of its moments. In the following, $\hbox{\rm Cum}_i(\Tr[\widehat{W}(n)])$ denotes its $i$-th cumulant for all
positive integer $i,$ and  $\varkappa_{\scriptsize{\Tr[\widehat{W}(n)]}}$ denotes the $\Tr[\widehat{W}(n)]$-cumulant umbra, that is the umbra whose moments are $\hbox{\rm Cum}_i(\Tr[\widehat{W}(n)]).$
\begin{prop} \label{cumwish} $\hbox{\rm Cum}_i(\Tr[\widehat{W}(n)]) = n (i-1)! \Tr(\Sigma^i).$
\end{prop}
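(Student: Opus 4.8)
The plan is to read off the cumulants directly from the symbolic representation of $\Tr[\widehat{W}(n)]$ rather than computing moments and inverting. By Proposition \ref{Letac} we have the equivalence $\Tr[\widehat{W}(n)] \equiv n \, \punt \, \beta \, \punt \, (\sigma \varkappa_{\bar{u}})$, where $\sigma$ umbrally represents the sequence $\{\Tr(\Sigma^i)\}$. The key structural fact I would exploit is that the $\alpha$-cumulant umbra of a partition umbra $n \, \punt \, \beta \, \punt \, \delta$ has a particularly simple form: since the partition umbra $\beta \, \punt \, \delta$ is the umbral counterpart of a (formal) compound Poisson, its cumulants coincide with the moments of $\delta$, and applying the dot-product with $n$ scales these cumulants linearly by $n$. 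More precisely, I would use that $\varkappa_{\scriptsize{n \, \punt \, \beta \, \punt \, \delta}} \equiv n \, \punt \, \chi \, \punt \, \beta \, \punt \, \delta$, together with $\chi \, \punt \, \beta \equiv u$ (recorded in the proof of Proposition \ref{Wifund12}), to collapse $\chi \, \punt \, \beta \, \punt \, (\sigma \varkappa_{\bar{u}})$ down to $\sigma \varkappa_{\bar{u}}$ itself.

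Carrying this out, the first step is to observe that the $i$-th cumulant of $\Tr[\widehat{W}(n)]$ equals the $i$-th moment of the cumulant umbra $\varkappa_{\scriptsize{\Tr[\widehat{W}(n)]}} \equiv \chi \, \punt \, \Tr[\widehat{W}(n)]$. Substituting the representation of Proposition \ref{Letac} gives $\chi \, \punt \, n \, \punt \, \beta \, \punt \, (\sigma \varkappa_{\bar{u}})$. Because the scalar dot-product with $n$ commutes appropriately with $\chi$ (the cumulant operation is additive, as encoded by (\ref{(distrib)}) extended to umbral multipliers, cf. \cite{Dinardoeurop}), this rewrites as $n \, \punt \, \chi \, \punt \, \beta \, \punt \, (\sigma \varkappa_{\bar{u}})$. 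Then using $\chi \, \punt \, \beta \equiv u$ we obtain $n \, \punt \, (\sigma \varkappa_{\bar{u}})$, so that the cumulant umbra of $\Tr[\widehat{W}(n)]$ is simply $n \, \punt \, (\sigma \varkappa_{\bar{u}})$. From (\ref{(sum3)}) the $i$-th moment of $n \, \punt \, \delta$ with $\delta = \sigma \varkappa_{\bar{u}}$ picks out, among all partitions $\lambda \vdash i$, only the one-part partition $\lambda = (i)$ when we demand the factor linear in $n$; more carefully, the additivity of cumulants forces $\hbox{\rm Cum}_i$ to be $n$ times the $i$-th moment of the single umbra $\sigma \varkappa_{\bar{u}}$.

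The final computational step is to evaluate $E[(\sigma \varkappa_{\bar{u}})^i]$. By the uncorrelation of $\sigma$ and $\varkappa_{\bar{u}}$ this factors as $E[\sigma^i] \, E[\varkappa_{\bar{u}}^i] = \Tr(\Sigma^i) \cdot (i-1)!$, since $\sigma$ represents $\{\Tr(\Sigma^i)\}$ by hypothesis and $\varkappa_{\bar{u}} \equiv \chi \, \punt \, \bar{u}$ represents the sequence $\{(i-1)!\}$ as computed in Section 2 from (\ref{(boolean)}) and (\ref{(dotproduct)}). Multiplying by the factor $n$ yields $\hbox{\rm Cum}_i(\Tr[\widehat{W}(n)]) = n \, (i-1)! \, \Tr(\Sigma^i)$, as claimed.

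I expect the main obstacle to be the bookkeeping at the level of dot-product manipulations, specifically justifying that $\chi$ may be pushed past the scalar dot-product $n \, \punt$ to isolate a single clean factor of $n$, and confirming that only the linear-in-$n$ contribution survives in the cumulant. This rests on the additivity of formal cumulants and on the identity $\chi \, \punt \, \beta \equiv u$, both of which are available from the preceding material; once these are invoked correctly, the remainder is the routine factorization of $E[(\sigma \varkappa_{\bar{u}})^i]$ via the uncorrelation property (\ref{(iii)}).
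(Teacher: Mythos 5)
Your overall strategy --- reading the cumulants off the symbolic representation of Proposition \ref{Letac} rather than inverting the moment formula --- is sound, but the central chain of umbral equivalences you write down is false. The step $\chi \, \punt \, n \, \punt \, \beta \, \punt \, (\sigma \varkappa_{\bar{u}}) \equiv n \, \punt \, \chi \, \punt \, \beta \, \punt \, (\sigma \varkappa_{\bar{u}})$ does not hold: in terms of generating functions, $f(\chi \, \punt \, (n \, \punt \, \alpha),z)=1+n\log f(\alpha,z)$ while $f(n \, \punt \, (\chi \, \punt \, \alpha),z)=\left(1+\log f(\alpha,z)\right)^n$, and these disagree already at order $z^2$. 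Consequently your intermediate claim that the cumulant umbra of $\Tr[\widehat{W}(n)]$ is $n \, \punt \, (\sigma \varkappa_{\bar{u}})$ is wrong: by (\ref{(sum3)}) the second moment of $n \, \punt \, (\sigma \varkappa_{\bar{u}})$ is $n\,E[(\sigma\varkappa_{\bar{u}})^2]+n(n-1)E[\sigma\varkappa_{\bar{u}}]^2 = n\,\Tr(\Sigma^2)+n(n-1)\Tr(\Sigma)^2$, whereas the second cumulant must be $n\,\Tr(\Sigma^2)$, and $\Tr(\Sigma)\ne 0$ here. The operation that multiplies every moment by $n$ is the disjoint sum $\dot{+}_{k=1}^n$, not the dot product $n \, \punt$; the correct collapse is $\chi \, \punt \, n \, \punt \, \beta \, \punt \, (\sigma\varkappa_{\bar{u}}) \equiv \dot{+}_{k=1}^n\, \chi \, \punt \, \beta \, \punt \, (\sigma\varkappa_{\bar{u}}) \equiv \dot{+}_{k=1}^n\, (\sigma\varkappa_{\bar{u}})$, by additivity of cumulants over the $n$ uncorrelated summands together with $\chi \, \punt \, \beta \, \punt \, \delta \equiv \delta$.

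That said, you half-recognize the problem yourself (``when we demand the factor linear in $n$''), and your fallback --- additivity of cumulants forces $\hbox{\rm Cum}_i$ to equal $n$ times $E[(\sigma\varkappa_{\bar{u}})^i]$, which factors by uncorrelation into $(i-1)!\,\Tr(\Sigma^i)$ --- is a complete and correct argument on its own. With the faulty equivalence deleted and the disjoint sum put in its place, your proof stands and is close in spirit to the paper's, which reaches the same end without any commutation of dot products: it applies the composition-moment formula (\ref{comp}) with $\gamma$ replaced by $\chi \, \punt \, n$, notes that $E[(\chi \, \punt \, n)^{l(\lambda)}]$ vanishes unless $l(\lambda)=1$ and equals $n$ there, so that only the one-part partition survives, giving $n\,E[(\sigma\varkappa_{\bar{u}})^i]$ directly.
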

\begin{proof}
From Proposition \ref{Letac}, $\hbox{\rm Cum}_i(\Tr[\widehat{W}(n)]) = E[(\chi \, \punt \, n \, \punt \, \beta \, \punt \, (\sigma \varkappa_{\bar{u}}))^i].$ The result follows from (\ref{comp}), with $\gamma$ replaced by the umbra $\chi \, \punt \, n.$ Indeed $E[(\chi \, \punt \, n)^{l(\lambda)}] \ne 0$ only with the partition $\lambda$ of $i$ having length $1.$ In this case, $E[(\chi \, \punt \, n)^{l(\lambda)}]=n$ and only the $i$-th moment $E[(\sigma \varkappa_{\bar{u}})^i] = (i-1)! \Tr(\Sigma^i)$ gives contribution in (\ref{comp}).
\end{proof}
Moments from cumulants can be recovered by using the partition umbra since if $\varkappa \equiv \chi \, \punt \, \alpha$ then $\alpha \equiv \beta \, \punt \, \varkappa,$ cf. \cite{Bernoulli}.
\begin{cor} \label{(aabb)}
$\Tr[\widehat{W}(n)] \equiv \beta \, \punt \, \varkappa_{\scriptsize{\Tr[\widehat{W}(n)]}} \equiv n \, \punt \, \beta \, \punt \, \varkappa_{\scriptsize{\Tr[\widehat{W}(1)]}}.$
\end{cor}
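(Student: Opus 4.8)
The plan is to establish the two equivalences separately, both leaning on the moment--cumulant inversion $\alpha \equiv \beta \, \punt \, \varkappa_{\alpha}$ recalled immediately before the statement (valid whenever $\varkappa_{\alpha} \equiv \chi \, \punt \, \alpha$).

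For the first equivalence I would simply specialize this inversion to $\alpha = \Tr[\widehat{W}(n)]$. By definition $\varkappa_{\scriptsize{\Tr[\widehat{W}(n)]}} \equiv \chi \, \punt \, \Tr[\widehat{W}(n)]$ is the cumulant umbra, so the inversion formula yields $\Tr[\widehat{W}(n)] \equiv \beta \, \punt \, \varkappa_{\scriptsize{\Tr[\widehat{W}(n)]}}$ with no further work.

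For the second equivalence the key step is to identify the cumulant umbra of $\Tr[\widehat{W}(1)]$. Proposition \ref{Letac} gives $\Tr[\widehat{W}(1)] \equiv \beta \, \punt \, (\sigma \varkappa_{\bar{u}})$; applying $\chi \, \punt$ on the left and using $\chi \, \punt \, \beta \equiv u$ together with the associativity $\chi \, \punt \, \beta \, \punt \, (\sigma \varkappa_{\bar{u}}) \equiv (\chi \, \punt \, \beta) \, \punt \, (\sigma \varkappa_{\bar{u}})$ and $u \, \punt \, \delta \equiv \delta$, I obtain $\varkappa_{\scriptsize{\Tr[\widehat{W}(1)]}} \equiv \chi \, \punt \, \Tr[\widehat{W}(1)] \equiv \sigma \varkappa_{\bar{u}}$. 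Substituting this identification back into Proposition \ref{Letac} for general $n$ then gives $n \, \punt \, \beta \, \punt \, \varkappa_{\scriptsize{\Tr[\widehat{W}(1)]}} \equiv n \, \punt \, \beta \, \punt \, (\sigma \varkappa_{\bar{u}}) \equiv \Tr[\widehat{W}(n)]$, closing the chain.

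An alternative route for the second equivalence, perhaps more transparent probabilistically, uses Proposition \ref{cumwish}: since $\hbox{\rm Cum}_i(\Tr[\widehat{W}(n)]) = n \, \hbox{\rm Cum}_i(\Tr[\widehat{W}(1)])$, additivity of cumulants under the dot product forces $\Tr[\widehat{W}(n)] \equiv n \, \punt \, \Tr[\widehat{W}(1)]$, whereupon the first equivalence applied at $n=1$ finishes the argument. The only delicate point in either route is the cancellation $\chi \, \punt \, \beta \equiv u$ (equivalently, that taking cumulants undoes the partition umbra); once that identity is invoked, everything else is routine manipulation of the dot product, so I do not anticipate a genuine obstacle beyond bookkeeping.
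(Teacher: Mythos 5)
Your proposal is correct and follows essentially the route the paper intends: the paper states the corollary as an immediate consequence of the moment--cumulant inversion $\alpha\equiv\beta\,\punt\,\varkappa_{\alpha}$ quoted just before it, together with Proposition \ref{Letac} and Proposition \ref{cumwish} (which identify $\varkappa_{\scriptsize{\Tr[\widehat{W}(1)]}}\equiv\sigma\varkappa_{\bar{u}}$), and these are exactly the ingredients you use. Both your main argument and your alternative via $\Tr[\widehat{W}(n)]\equiv n\,\punt\,\Tr[\widehat{W}(1)]$ are sound.
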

\subsection{Non-central distributions.}
From Theorem \ref{Wifund1} and Remark \ref{cyclic}, moments of $\Tr[W(n)]$ can be computed by binomial expansion:
\begin{eqnarray}
E\left(\Tr[W(n)]^i \right) & = & \sum_{j=0}^i \binom{i}{j} E\left[\left\{ -1 \, \punt \, \beta \, \punt \, \left( \dot{+}_{k=1}^p \nu_k  \theta_k  \bar{u}_k \right)\right\}^j \right] E\left[\left\{ n \, \punt \, (+_{k=1}^p  \theta_k \bar{u}_k) \right\}^{i-j} \right] \, \nonumber \\
& = & i! \sum_{j=0}^i \left( \sum_{\lambda \vdash j} \frac{(-1)^{l(\lambda)}}{r_1! r_2! \cdots} \prod_{i=1}^{l(\lambda)} \Tr(\Omega \Sigma^i)^{r_i} \right)
\left( \sum_{\lambda^{\prime} \vdash i-j} \frac{n^{l(\lambda^{\prime})}}{r_1! r_2! \cdots} \prod_{i=1}^{l(\lambda^{\prime})} {\mathcal C}_i^{r_i}(\Sigma) \right)
\label{(mom2)}
\end{eqnarray}
with ${\mathcal C}_i (\Sigma) = {\mathcal C}_i[\Tr(\Sigma), \ldots, \Tr(\Sigma^i)]$ the $i$-th cyclic polynomial. 

From the additivity property of cumulants and Theorem \ref{Wifund1}, cumulants of non-central Wishart distributions are such that $\hbox{\rm Cum}_i(\Tr[W(n)]) = 
\hbox{\rm Cum}_i(\Tr[\widehat{W}(n)]) + \hbox{\rm Cum}_i(\Tr(A)).$ This is the key to prove the following theorem.
\begin{thm} \label{cum2}
$\hbox{\rm Cum}_i(\Tr[W(n)]) = n \, (i-1)! \Tr(\Sigma^i) - i ! \Tr(M \Sigma^{i-1}).$
\end{thm}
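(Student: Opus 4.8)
The plan is to exploit the additivity splitting already recorded just above the statement, namely
\[
\hbox{\rm Cum}_i(\Tr[W(n)]) = \hbox{\rm Cum}_i(\Tr[\widehat{W}(n)]) + \hbox{\rm Cum}_i(\Tr(A)),
\]
and to evaluate the two contributions separately. The central term is immediate: Proposition \ref{cumwish} gives $\hbox{\rm Cum}_i(\Tr[\widehat{W}(n)]) = n\,(i-1)!\,\Tr(\Sigma^i)$, which is precisely the first summand in the claimed formula. Everything then reduces to showing that the non-central contribution equals $\hbox{\rm Cum}_i(\Tr(A)) = -\,i!\,\Tr(M\Sigma^{i-1})$.

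For this I would work with the cumulant umbra $\varkappa_{\Tr(A)} \equiv \chi \punt \Tr(A)$, whose moments are by definition the $\hbox{\rm Cum}_i(\Tr(A))$ and whose g.f. is $1+\log f(\Tr(A),z)$ by (\ref{(dotproduct)}) with $\gamma=\chi$. Writing $\zeta = \dot{+}_{k=1}^p \nu_k \theta_k \bar{u}_k$ so that $\Tr(A) = -1 \punt \beta \punt \zeta$, the g.f. recalled in Section~2 for this auxiliary umbra reads $f(\Tr(A),z) = \exp[1-f(\zeta,z)]$, so the logarithm collapses and $1+\log f(\Tr(A),z) = 2 - f(\zeta,z)$. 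The moments of $\zeta$ are supplied by (\ref{(ccc11)}): $E[\zeta^i] = i!\,\mathcal{S}_i = i!\,\Tr(\Omega\Sigma^i)$, whence, the factorials cancelling, $f(\zeta,z) = 1 + \sum_{i\ge 1}\Tr(\Omega\Sigma^i)\,z^i$ as an ordinary power series.

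Extracting the coefficient of $z^i/i!$ from $1+\log f(\Tr(A),z) = 1 - \sum_{i\ge 1}\Tr(\Omega\Sigma^i)\,z^i$ then yields $\hbox{\rm Cum}_i(\Tr(A)) = -\,i!\,\Tr(\Omega\Sigma^i)$. Finally, substituting $\Omega = \Sigma^{-1}M$ and using the cyclic invariance of the trace gives $\Tr(\Omega\Sigma^i) = \Tr(\Sigma^{-1}M\Sigma^i) = \Tr(M\Sigma^{i-1})$, and adding back the central contribution produces the stated identity. The only place that demands care is the bookkeeping around the auxiliary umbra $-1 \punt \beta \punt \zeta$: one must track both the negative sign carried by the parameter $-1$ and the constant term $1$ in $f(\zeta,z)$, so that the logarithm produces exactly $1-f(\zeta,z)$ and no spurious lower-order term survives. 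Once this is settled the coefficient comparison is routine.
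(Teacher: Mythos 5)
Your proposal is correct and follows essentially the same route as the paper: the same additivity splitting $\hbox{\rm Cum}_i(\Tr[W(n)]) = \hbox{\rm Cum}_i(\Tr[\widehat{W}(n)]) + \hbox{\rm Cum}_i(\Tr(A))$, with the central term supplied by Proposition \ref{cumwish} and the non-central term reduced to $-i!\,\mathcal{S}_i = -i!\,\Tr(\Omega\Sigma^i) = -i!\,\Tr(M\Sigma^{i-1})$. The only cosmetic difference is that you evaluate $\hbox{\rm Cum}_i(\Tr(A))$ by taking the logarithm of the generating function $f(-1\,\punt\,\beta\,\punt\,\zeta,z)=\exp[1-f(\zeta,z)]$, whereas the paper expands $\chi\,\punt\,[-1\,\punt\,\beta\,\punt\,\zeta]$ over integer partitions and observes that only $l(\lambda)=1$ contributes; both computations are equivalent and yours is handled correctly, including the sign and constant-term bookkeeping.
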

\begin{proof}
From Theorem \ref{Wifund1}, cumulants of $\Tr[W(n)]$ are umbrally represented by
$$\chi \, \punt \, [-1 \, \punt \, \beta \, \punt \, (\nu_1 \theta_1 \bar{u}_1 \dot{+} \cdots \dot{+}  \nu_p \theta_p \bar{u}_p) + n \, \punt \, (\theta_1 \bar{u}_1 + \cdots \theta_p \bar{u}_p)].$$
From the additivity property of cumulants this umbra is similar to
$$\chi \, \punt \, [-1 \, \punt \, \beta \, \punt \, (\nu_1 \theta_1 \bar{u}_1 \dot{+} \cdots \dot{+}  \nu_p \theta_p \bar{u}_p)]
\dot{+} \chi \, \punt \, n \, \punt \, (\theta_1 \bar{u}_1 + \cdots + \theta_p \bar{u}_p).$$ 
Since $\chi \, \punt \, n \, \punt \, (\theta_1 \bar{u}_1 + \cdots + \theta_p \bar{u}_p) \equiv \chi \, \punt \, n \, \punt \, \beta \, \punt \, (\sigma \varkappa_{\bar{u}}),$
its moments are given in Proposition \ref{cumwish}. Moreover $E\left\{(\chi \, \punt \, [-1 \, \punt \, \beta \, \punt \, (\nu_1 \theta_1 \bar{u}_1 \dot{+} \cdots \dot{+}  \nu_p \theta_p \bar{u}_p)])^i \right\} = \sum_{\lambda \vdash i} E[(-\chi)^{l(\lambda)}] \, d_{\lambda} \, \lambda! \, {\mathcal S}_{\lambda} = - i! {\mathcal S}_{i},$ with $\{\mathcal{S}_i\}$ given in (\ref{(ccc11)}) and $\lambda!=\lambda_1! \, \lambda_2! \, \cdots$ since $E[(-\chi)^{l(\lambda)}] \ne 0$ only for $l(\lambda)=1.$
\end{proof}
\begin{cor}\label{corex}
$\hbox{\rm Cum}_i(\Tr[W(n)]) = (i-1)! \sum_{j=1}^p [n - i \, b_{j \, j}] \, \theta_j^i.$
\end{cor}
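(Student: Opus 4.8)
The plan is to start directly from the closed form already established in Theorem \ref{cum2} and simply rewrite both traces appearing there in terms of the eigenvalues $\theta_1, \ldots, \theta_p$ of $\Sigma$ and the diagonal entries $b_{11}, \ldots, b_{pp}$. Since $\Sigma = Q \Lambda Q^\dag$ with $\Lambda = \hbox{\rm diag}(\theta_1, \ldots, \theta_p)$, the first trace is immediate from similarity invariance: $\Tr(\Sigma^i) = \Tr(\Lambda^i) = \sum_{j=1}^p \theta_j^i$.

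For the second trace I would exploit the cyclic invariance of $\Tr$ to recognize that $\Tr(M \Sigma^{i-1})$ is nothing but the power sum $\mathcal{S}_i$ already computed in the proof of Theorem \ref{Wifund1}. Indeed, writing $M = \Sigma \Omega$ (from $\Omega = \Sigma^{-1} M$) and cycling the leading factor $\Sigma$ to the right, one gets $\Tr(M \Sigma^{i-1}) = \Tr(\Sigma \Omega \Sigma^{i-1}) = \Tr(\Omega \Sigma^i)$, which is exactly $\mathcal{S}_i = \sum_{j=1}^p b_{jj} \theta_j^i$ as recorded in (\ref{(ccc11)}). Substituting both identities into Theorem \ref{cum2} and factoring $i! = i\,(i-1)!$ then yields
$$\hbox{\rm Cum}_i(\Tr[W(n)]) = n\,(i-1)! \sum_{j=1}^p \theta_j^i - i\,(i-1)! \sum_{j=1}^p b_{jj}\,\theta_j^i = (i-1)! \sum_{j=1}^p \left[ n - i\,b_{jj} \right] \theta_j^i,$$
which is the claimed expression.

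The only point requiring care, and the step I expect to be the main (albeit minor) obstacle, is the identification of $\Tr(M \Sigma^{i-1})$ with $\mathcal{S}_i = \Tr(\Omega \Sigma^i)$: one must track the exponent shift produced by the cyclic rearrangement, so that the factor $\Sigma$ hidden in $M = \Sigma \Omega$ is absorbed to turn $\Sigma^{i-1}$ into $\Sigma^i$, thereby matching the index $i$ of the already computed power sum rather than $i-1$. Everything else is a direct algebraic substitution into Theorem \ref{cum2}.
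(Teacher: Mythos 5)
Your proof is correct and matches what the paper intends: Corollary \ref{corex} is stated without proof as an immediate consequence of Theorem \ref{cum2}, obtained exactly as you describe by substituting $\Tr(\Sigma^i)=\sum_j\theta_j^i$ and $\Tr(M\Sigma^{i-1})=\Tr(\Omega\Sigma^i)=\mathcal{S}_i=\sum_j b_{jj}\theta_j^i$ (the latter identity being recorded already in the proof of Theorem \ref{Wifund1}, where $b_{jj}$ are the diagonal entries of $B=Q^\dag\Omega Q$). The cyclic-trace step you flag is handled correctly, and the factorization $i!=i\,(i-1)!$ completes the argument.
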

\begin{example}
Set $n=3.$ By using R, if 
$$\Sigma = \left( \begin{array}{ccc}
0.025 & - 0.0075 i & 0.00175 \\
0.0075 i & 0.0070 & 0.00135 \\
0.00175 & 0.00135 & 0.00043
\end{array} \right) \, \hbox{and} \, M = \left( \begin{array}{ccc}
 0.0001  & 0.0210 & 0.3000 \\
 0.0400  & 0.0005 & 0.0200 \\
 0.0010  & 0.0100 & 0.0004
\end{array} \right)$$ then
$$\Omega = \Sigma^{-1} \, M = \left( \begin{array}{rrr}
    -5.16 -  7.45 \, i &   6.04  + 6.25  \, i & -25.10 -   3.79 \, i \\
   -17.19 -  0.80 \, i &  17.82  - 3.98  \, i & -53.27 -  60.37 \, i \\
    77.33 + 32.87 \, i & -57.30 - 12.92  \, i & 269.96 - 174.11 \, i 
\end{array} \right)$$
$$B = \left( \begin{array}{rrr}
      0.63 -   0.05  \,i &    0.99 -   3.68 \, i &   7.22 - 6.01 \, i \\
      0.42 +   8.29  \,i &    3.63 -   4.31 \, i &  16.81 - 9.90 \, i \\
     73.21 -  65.72  \,i &   19.18 -  25.95 \, i & 278.35 - 181.19 \, i
\end{array} \right)$$ and from Corollary \ref{corex} we have $\hbox{\rm Cum}_1(\Tr[W(3)]) = 3 \Tr(\Sigma) - \Tr(M) = 0.03143, \,\,\, \hbox{\rm Cum}_2(\Tr[W(3)]) = 3\, \Tr(\Sigma^2) - 2 \Tr(M \, \Sigma) = 0.0012 + 0.0028 \, i, \hbox{\rm Cum}_3(\Tr[W(3)]) = 
6 \Tr( \Sigma^3) - 6 \Tr(M \Sigma^2) = 1.0192\times 10^{-4} + 3.3278 \times 10^{-6} \, i$ and so on.
\end{example}
Complete Bell exponential polynomials give moments in terms of cumulants \cite{Bernoulli}: if $\{c_i\}$
is the sequence of formal cumulants of $\{a_n\},$ then $a_n=Y_n(c_1,  \ldots,c_n).$ The connection with cyclic polynomials is a consequence of
Remark \ref{cyclicBell}.
\begin{cor} \label{noncentral} If $\tilde{c}_i = n \, (i-1)! \, \Tr(\Sigma^i)$ and $\bar{c}_i = - i! \, \mathcal{S}_i,$ then
$$E(\Tr[W(n)]^i) = Y_i(\tilde{c}_1 + \bar{c}_1, \ldots, \tilde{c}_i + \bar{c}_i) = {\mathcal C}_i \left(\tilde{c}_1 + \bar{c}_1,  \ldots, \displaystyle{\frac{\tilde{c}_i + \bar{c}_i}{(i-1)!}} \right).$$
\end{cor}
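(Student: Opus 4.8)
The plan is to derive both equalities as immediate consequences of two facts already recorded in the excerpt: the moment--cumulant inversion via complete Bell polynomials, stated just before the corollary, and the Bell--cyclic conversion of Remark~\ref{cyclicBell}. No fresh umbral computation is needed; the work lies entirely in matching indices and normalizing constants.

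First I would observe that the quantity $\tilde{c}_i + \bar{c}_i$ is precisely the $i$-th cumulant of $\Tr[W(n)]$ furnished by Theorem~\ref{cum2}. Indeed $\tilde{c}_i = n\,(i-1)!\,\Tr(\Sigma^i)$ is the first summand there, and since $\mathcal{S}_i = \Tr(\Omega\Sigma^i) = \Tr(\Sigma^{-1}M\Sigma^i) = \Tr(M\Sigma^{i-1})$ by the cyclic invariance of the trace (recalling $\mathcal{S}_i$ from (\ref{(ccc11)}) and $\Omega = \Sigma^{-1}M$), we get $\bar{c}_i = -i!\,\mathcal{S}_i = -i!\,\Tr(M\Sigma^{i-1})$, which is the second summand. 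Hence $\tilde{c}_i + \bar{c}_i = \hbox{\rm Cum}_i(\Tr[W(n)])$. The first equality then follows by setting $a_i = E(\Tr[W(n)]^i)$ and taking $c_i = \tilde{c}_i + \bar{c}_i$ in the inversion $a_i = Y_i(c_1, \ldots, c_i)$.

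For the second equality I would invoke the relation ${\mathcal C}_i(a_1, \ldots, a_i) = Y_i[a_1, 1!\,a_2, \ldots, (i-1)!\,a_i]$ from Remark~\ref{cyclicBell}. Reading it backwards, the arguments of ${\mathcal C}_i$ are recovered from those of $Y_i$ by dividing the $j$-th slot by $(j-1)!$. Thus, with $c_j = \tilde{c}_j + \bar{c}_j$, setting $a_j = c_j/(j-1)!$ yields $Y_i(c_1, \ldots, c_i) = {\mathcal C}_i(c_1/0!, \ldots, c_i/(i-1)!)$, which is exactly the displayed right-hand side once we note that $0! = 1$ leaves the first argument $\tilde{c}_1 + \bar{c}_1$ unchanged.

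Since both steps are direct substitutions into formulas already established, there is no genuine obstacle here; the only points demanding care are the bookkeeping of factorials in the Bell--cyclic dictionary and the identification $\mathcal{S}_i = \Tr(M\Sigma^{i-1})$ that reconciles the constant $\bar{c}_i$ with the second term of Theorem~\ref{cum2}.
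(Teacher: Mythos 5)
Your proposal is correct and follows exactly the route the paper intends: identify $\tilde{c}_i+\bar{c}_i$ with $\hbox{\rm Cum}_i(\Tr[W(n)])$ from Theorem~\ref{cum2} (using $\mathcal{S}_i=\Tr(\Omega\Sigma^i)=\Tr(M\Sigma^{i-1})$), apply the moment--cumulant inversion $a_i=Y_i(c_1,\ldots,c_i)$ stated just before the corollary, and convert to cyclic polynomials via the dictionary of Remark~\ref{cyclicBell}. The paper leaves the corollary without a displayed proof precisely because these are the only steps, and your factorial bookkeeping in the Bell--cyclic conversion is accurate.
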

A completely different expression of $E(\Tr[W(n)]^i)$ is given in \cite{Numata}, where weighted generating functions of special graphs are employed.

In \cite{DMS}, a different family of cumulants has been introduced, representing $\{\hbox{\rm Cum}_i(\Tr[W(n)])\}$ normalized to the dimension $p$ of $W(n).$ Indeed the definition given in  
\cite{DMS} leads to the introduction of umbral polynomials $\{\mathfrak{c}_{1,\boldsymbol{\mu}}, \ldots, \mathfrak{c}_{p,\boldsymbol{\mu}} \}$ with $\boldsymbol{\mu} = (\mu_1, \ldots, \mu_p)$ and $\Tr[W(n)] \equiv \mu_1 + \cdots + \mu_p,$ such that
\begin{equation}
\Tr[W(n)] \equiv p \, \punt \, \beta \, \punt \, \Tr[\, {\mathfrak C}(n)]
\label{(clascum)}
\end{equation}
where $\Tr[\, {\mathfrak C}(n)]=\mathfrak{c}_{1,\boldsymbol{\mu}} + \cdots + \mathfrak{c}_{p,\boldsymbol{\mu}}.$
Note that the order $p$ plays a fundamental role in (\ref{(clascum)}), differently from Theorem \ref{cum2} where instead the integer $n$ gives the main contribution.
The following theorem highlights the connection between the two families of cumulants.
\begin{thm} \label{relcum} $\varkappa_{\scriptsize{\Tr[W(n)]}} \equiv \chi \, \punt \, p \, \punt \, \beta \, \punt \Tr[\,{\mathfrak C}(n)]$ and $\Tr[\,{\mathfrak C}(n)] \equiv \chi \, \punt \, \frac{1}{p} \, \punt \, \beta 
\, \punt \, \varkappa_{\scriptsize{\Tr[W(n)]}}.$
\end{thm}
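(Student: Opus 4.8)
The plan is to derive both identities as formal consequences of the factorization (\ref{(clascum)}) combined with the defining relation $\varkappa_{\alpha} \equiv \chi \punt \alpha$ of the cumulant umbra. For the first identity I would apply the singleton umbra on the left of (\ref{(clascum)}): from $\Tr[W(n)] \equiv p \punt \beta \punt \Tr[\,{\mathfrak C}(n)]$ one gets $\chi \punt \Tr[W(n)] \equiv \chi \punt p \punt \beta \punt \Tr[\,{\mathfrak C}(n)]$, and the left-hand side is by definition $\varkappa_{\scriptsize{\Tr[W(n)]}}$. The only point to verify is that the nested dot product on the right is unambiguous, which is guaranteed by the associativity $\gamma \punt (\beta \punt \alpha) \equiv (\gamma \punt \beta) \punt \alpha$ recalled in Section 2.

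For the second identity I would invert the first one by peeling off the operators $\chi$, $p$ and $\beta$ in turn. Applying the inversion rule quoted just before Corollary \ref{(aabb)} (namely, if $\varkappa \equiv \chi \punt \alpha$ then $\alpha \equiv \beta \punt \varkappa$) with $\alpha = p \punt \beta \punt \Tr[\,{\mathfrak C}(n)]$ yields $p \punt \beta \punt \Tr[\,{\mathfrak C}(n)] \equiv \beta \punt \varkappa_{\scriptsize{\Tr[W(n)]}}$. I would then take the dot product by the scalar $1/p$ on the left of both sides: since the g.f. of $a \punt \alpha$ is $f(\alpha,z)^{a}$, the relation $\frac{1}{p} \punt p \punt \gamma \equiv \gamma$ holds, so the left-hand side collapses and gives $\beta \punt \Tr[\,{\mathfrak C}(n)] \equiv \frac{1}{p} \punt \beta \punt \varkappa_{\scriptsize{\Tr[W(n)]}}$. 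Finally I would compose on the left with $\chi$ and use $\chi \punt \beta \equiv u$ together with $u \punt \gamma \equiv \gamma$ to cancel $\chi \punt \beta$, obtaining $\Tr[\,{\mathfrak C}(n)] \equiv \chi \punt \frac{1}{p} \punt \beta \punt \varkappa_{\scriptsize{\Tr[W(n)]}}$.

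The main obstacle I anticipate is bookkeeping rather than substance: dot products do not commute, so at each stage I must compose on the correct side and invoke $\chi \punt \beta \equiv u$ (the companion of the stated $\beta \punt \chi \equiv u$, obtained the same way) exactly where the order matters, and I must interpret the scalar $1/p$ legitimately inside a dot product. All three elementary simplifications $\chi \punt \beta \equiv u$, $u \punt \gamma \equiv \gamma$ and $\frac{1}{p} \punt p \punt \gamma \equiv \gamma$ reduce, via (\ref{(dotproduct)}), to the statement that the corresponding operation fixes $f(\alpha,z)$; once these are in place the two identities are mirror images of one another and the computation is symmetric.
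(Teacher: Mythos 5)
Your proposal is correct and follows essentially the same route as the paper, whose two-line proof (take the dot-product of (\ref{(clascum)}) with $\chi$; then recover the second identity by dotting with $\chi \, \punt \, \frac{1}{p}$ after inverting) is just a terser version of your computation. The extra cancellations you make explicit --- $\chi \, \punt \, \beta \equiv u$, $u \, \punt \, \gamma \equiv \gamma$, $\frac{1}{p} \, \punt \, p \, \punt \, \gamma \equiv \gamma$, each verified at the level of generating functions via (\ref{(dotproduct)}) --- are exactly the steps the paper leaves implicit.
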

\begin{proof}
The first equivalence follows from (\ref{(clascum)}) by taking the dot-product with $\chi$ of both sides.
The latter follows from the first by taking the dot-product of both sides with $\chi \punt \frac{1}{p}.$
\end{proof}
\section{Joint moments and joint cumulants of Wishart distributions}
In order to deal with joint moments (\ref{(joint)}), we need to recall the multivariate version of the symbolic method.
As before, we just introduce the tools necessary to work with the object of this paper. The reader interested in a detailed exposition of the topic is referred to \cite{faa}.
\subsection{Multivariate symbolic method.}
Let $\{\nu_1, \ldots, \nu_m\}$ be a set of umbral monomials
with supports not necessarily disjoint.  A sequence $\{g_{\ibs}\}_{\ibs \in \mathbb{N}_0^m} \in {\mathbb C}$, with $g_{\ibs} = g_{i_1, i_2, \ldots, i_m}$ and $g_{\bf 0} = 1$, is
represented by the $m$-tuple $\nubs=(\nu_1,\ldots,\nu_m)$ if
\begin{equation}
E[\nubs^{\ibs}] = g_{\ibs},
\label{(multmoments)}
\end{equation}
for each multi-index $\ibs \in \mathbb{N}_0^m$. The elements $\{g_{\ibs}\}_{\ibs \in \mathbb{N}_0^m}$ in (\ref{(multmoments)}) are called {\it multivariate moments} of $\nubs$. Paralleling (\ref{(gf1)}), the g.f. of $\nubs$ is the exponential formal power series
\begin{equation}
e^{\nu_1 z_1 +  \cdots + \nu_m z_m} = u + \sum_{k \geq 1} \, \sum_{{|\ibs|=k}} \nubs^{\ibs}\frac{\zbs^{\ibs}}{\ibs!}
\in {\mathbb C}[\A][[z_1, \ldots,z_m]]
\label{(gf)}
\end{equation}
with $\zbs = (z_1,  \ldots, z_m), |\ibs|=i_1 + \cdots + i_m$ and $\ibs!=i_1! \cdots i_m!$.
If the sequence $\{g_{\ibs}\}$ is umbrally represented by $\nubs$ and has (exponential) g.f.
\begin{equation}
f(\zbs) = 1 + \sum_{k \geq 1} \,  \sum_{|\ibs|=k} g_{\ibs} \frac{\zbs^{\ibs}}{\ibs!}
\label{(genfun1)}
\end{equation}
then from (\ref{(gf)}) $E[e^{\nu_1 z_1 + \cdots + \nu_m z_m}] = f(\zbs).$ Taking into account (\ref{(multmoments)}), the g.f. in (\ref{(genfun1)}) is denoted by $f(\nubs,\zbs)$.
Two umbral vectors $\nubs_1$ and $\nubs_2$ are said to be {\it similar}, in symbols $\nubs_1 \equiv \nubs_2$, iff
$f(\nubs_1,\zbs)=f(\nubs_2,\zbs)$, that is $E[\nubs_1^{\ibs}]=E[\nubs_2^{\ibs}]$ for all $\ibs \in \mathbb{N}_0^m$.
They are said to be {\it uncorrelated} iff $E[\nubs_1^{\ibs} \nubs_2^{\jbs}]= E[\nubs_1^{\ibs}]E[\nubs_2^{\jbs}]$ for all $\ibs, \jbs \in  \mathbb{N}_0^m$.

An analogous of (\ref{(sum3)}) holds for the multivariate case, provided to replace integer partitions with multi-index partitions\footnote{
A partition of a multi-index $\ibs,$  in symbols $\lambdabs \mmodels \ibs,$ is a matrix $\lambdabs = (\lambda_{ij})$ of nonnegative integers and
with no zero columns in lexicographic order  such that
$\lambda_{r1}+\lambda_{r2}+\cdots+\lambda_{rk}=i_r$ for
$r=1,2,\ldots,n$. The number of columns of $\lambdabs$ is called the length of $\lambdabs$
and denoted by $l(\lambdabs)$.  As for integer partitions, the notation $\lambdabs = (\lambdabs_{1}^{r_1}, \lambdabs_{2}^{r_2}, \ldots)$
means that in the matrix $\lambdabs$ there are $r_1$ columns equal to $\lambdabs_{1}$,
$r_2$ columns equal to $\lambdabs_{2}$ and so on, with $\lambdabs_{1} <
\lambdabs_{2} < \cdots$. We call $r_i$ multiplicity of $\lambdabs_i$ and set
$\ml(\lambdabs)=(r_1, r_2,\ldots)$.} \cite{faa}. The dot-product $n \, \punt \, \nubs$ of a nonnegative integer $n$ and a $m$-tuple $\nubs$ denotes
the summation $\nubs^{\prime} + \nubs^{\prime \prime} + \cdots + \nubs^{\prime \prime \prime}$  with
$\{\nubs^{\prime}, \nubs^{\prime \prime}, \ldots, \nubs^{\prime \prime \prime}\}$ a set of $n$ uncorrelated and similar $m$-tuples.
For $\ibs \in {\mathbb N}^m_0,$ its multivariate moments are
\begin{equation}
E[(n \, \punt \, \nubs)^{\ibs}] =  \sum_{\lambdabs \mmodels \ibs}
d_{\lambdabs} \, (n)_{l(\lambdabs)} \, g_{\lambdabs}, \qquad \hbox{with} \qquad d_{\lambdabs} = \frac{\ibs!}{\ml(\lambdabs)! \lambdabs!}, \label{(eq:15)}
\end{equation}
the sum runs over all partitions  $\lambdabs = (\lambdabs_{1}^{r_1}, \lambdabs_{2}^{r_2}, \ldots)$ of the multi-index $\ibs, g_{\lambdabs} = g_{\lambdabs_1}^{r_1} g_{\lambdabs_2}^{r_2} \cdots$
and $g_{\lambdabs_i} = E[\nubs^{\lambdabs_i}]$. If we replace the integer $n$ in (\ref{(eq:15)}) with the dot-product
$\alpha \, \punt \, \beta,$ we get the auxiliary umbra $\alpha \, \punt \, \beta \, \punt \, \nubs$
whose moments are
\begin{equation}
E[(\alpha \, \punt \, \beta \, \punt \, \nubs)^{\ibs}] =  \sum_{\lambdabs \mmodels \ibs}
d_{\lambdabs}  \, a_{l(\lambdabs)} \, g_{\lambdabs} , \label{(eq:16)}
\end{equation}
with $\{a_i\}$ in $a_{l(\lambdabs)}$ umbrally represented by $\alpha$.
In particular the g.f. of the auxiliary umbra $\alpha \, \punt \, \beta \, \punt \, \nubs$ turns to be
$f(\alpha \, \punt \, \beta \, \punt \, \nubs,\zbs)=f[\alpha, f(\nubs,\zbs)-1],$ that is
the composition of the univariate g.f. $f(\alpha,z)$ and the multivariate g.f. $f(\nubs,\zbs).$
More details on the composition of multivariate formal power series are given in \cite{faa}. For what we need in the following, we just recall that
$f( -1 \, \punt \, \beta \, \punt \, \nubs,\zbs)= \exp\{ - [f(\nubs,\zbs)-1] \}$ and $f( n \, \punt \, \beta \, \punt \, \nubs,\zbs)= \exp\{ n [f(\nubs,\zbs)-1] \}.$  If in
(\ref{(eq:16)}) the umbra $\alpha$ is replaced by the umbra $\chi \, \punt \, \chi,$ then the $\nubs$-cumulant umbra $\chi \, \punt \, \chi \, \punt \, \beta \, \punt \, \nubs
\equiv \chi \, \punt \, \nubs$ is recovered, whose moments are the multivariate cumulants of the $m$-tuple $\nubs.$
\subsection{Joint moments and necklaces.}
The aim of this paragraph is to find a symbolic representation of  joint moment (\ref{(joint)})
as the $\ibs$-th coefficient of the g.f. $E \left( \exp \{ \Tr [ W(n)(H_1 z_1 +  \cdots + H_m z_m)] \} \right),$
with $\ibs =(i_1, \ldots, i_m).$ It is well-known  that if $Z$ is a $p \times p$ Hermitian parameter matrix, then
\begin{equation}
E\left( \exp \{ \Tr[W(n) \, Z] \} \right) = \frac{\exp \left\{ - \Tr[(I_p - \Sigma \, Z)^{-1} \,  \Omega \, \Sigma \, Z] \right\} }{[\det{(I_p - \Sigma \, Z)}]^n}
\label{(laptrasfbis)}
\end{equation}
with $\Omega$ the non-centrality matrix \cite{Numata}. Within formal power series, $f(W(n) \, H,Z) = f(W(n),$ $H \, Z)$ with $H \in {\mathbb C}^{p \times p}.$ Then choose
$H$ and $Z$ such that $\Tr[W(n) \, H \, Z] = \Tr[W(n) \, H_1] \, z_1 + \cdots + \Tr[W(n) H_m] \, z_m.$ From (\ref{(laptrasfbis)})
\begin{equation}
E\left[ \exp \left( \Tr[W(n) \, H \, Z] \right) \right] = \frac{\exp\left\{- \Tr[(I_p -  \Sigma \, H \, Z)^{-1} \, \Omega \, \Sigma \,  H\, Z] \right\}}{[\det{(I_p - \Sigma \, H \, Z)}]^n}.
\label{(laptrasfter)}
\end{equation}
Equation (\ref{(laptrasfter)}) is the starting point to prove Theorem \ref{Wifund2}, where the
symbolic representation of a non-central Wishart distribution is generalized to the multivariate case. The form of this
symbolic representation is very similar to the univariate case, provided scalar umbrae are replaced by suitable $m$-tuples of umbral monomials.

In order to take advantage of the cyclic property of traces, we resort the notion of necklace. A $m$-ary necklace is an equivalence class of $m$-ary strings equivalent under rotation (the cyclic
group) \cite{Cattell}. Let the $m$-ary strings be elements of the set $\{1,\ldots,m\}^{|\ibs|},$ then having length $|\ibs|,$ with $\ibs=(i_1, \ldots, i_m).$
Denote the set of all necklaces  of length $|\ibs|$ over the $m$-ary alphabet of indexes $\{1,\ldots,m\}$ with ${\boldsymbol N}_m{(|\ibs}|).$   It is natural to choose as representative ${\mathfrak a}$ of a necklace its
lexicographically smallest string. Then we denote the corresponding necklace with $[{\mathfrak a}]$ .
\begin{defn}
In the set $\{1,\ldots,m\}^{|\ibs|},$ a necklace is said to be of kind $\ibs,$ if the elements in its strings are chosen in the multiset\footnote{Informally, a multiset is a generalization of the notion of set where
elements are allowed to appear more than once. The notation $\{1^{i_1}, 2^{i_2}, \ldots, m^{i_m}\}$ denotes
that $1$ appears $i_1$ times, $2$ appears
$i_2$ times and so on. For a formal definition, cf. \cite{Bernoulli}.} $\{1^{i_1}, 2^{i_2}, \ldots, m^{i_m}\}.$
\end{defn}
To the best of our knowledge, this definition is given here for the first time.
We denote the set of representatives of necklaces of kind $\ibs$ with ${\boldsymbol N}_m{[\ibs]}.$
\begin{example}
If $m=3$ and $|\ibs|=3,$ then ${\boldsymbol N}_3(3)=
\left\{111, 222, 333, 112, 122, 113, 133, 223, 233,\right.$ $\left. 123, 132\right\}.$ The string $123$ is the representative of the necklace $[123]=\{123, 312, 231\}$ and
the string $112$ is the representative of the necklace $[112]=\{112, 121, 211\}.$  Moreover
${\boldsymbol N}_3[(3,0,0)]=\{111\}$ and ${\boldsymbol N}_3[(1,2,0)]=\{122\}.$ We have
$|{\boldsymbol N}_3[\ibs]|=1,$ for all $\ibs$ with $|\ibs|=3,$
except for $\ibs=(1,1,1)$ as ${\boldsymbol N}_3 [(1,1,1)]=\{123,132\}.$
\end{example}
In most cases, a necklace has full size, which is equal to the length of a string. But, there are periodic necklaces of period $d < m,$ a divisor of $m.$
In the example, $111$ is a necklace of period $1.$ The aperiodic necklaces are also known as {\it Lyndon words}. Denote the set of Lyndon words of length $|\ibs|$
over the $m$-ary alphabet of indexes $\{1,\ldots,m\}$ with  ${\boldsymbol M}_m{(|\ibs}|).$ As before, denote the set of aperiodic necklaces of kind $\ibs$
with ${\boldsymbol M}_m [\ibs].$ The closed form formula for joint moments (\ref{(joint)}) is given in the following theorem.

\begin{thm} \label{Wifund2}
For $\ibs = (i_1, \ldots, i_m) \in {\mathbb N}_0^{m}$
\begin{equation}
E\left\{ \Tr \left[ W(n) H_1 \right]^{i_1} \cdots \Tr \left[ W(n) H_m \right]^{i_m} \right\}  = E[(- 1 \, \punt \, \beta \, \punt \tilde{{\boldsymbol \eta}} + n \, \punt \, \beta \, \punt \, \tilde{{\boldsymbol \rho}})^{\ibs}], \label{(jointmult1)}
\end{equation}
with
\begin{description}
\item[{\it i)}] $\tilde{{\boldsymbol \rho}} = (\bar{u}_1 \, \rho_1, \ldots, \bar{u}_m \, \rho_m)$ and ${\boldsymbol \rho}=(\rho_1, \ldots, \rho_m)$ such that
\begin{equation}
E[{\boldsymbol {\rho}}^{\ibs}] = \sum_{{\mathfrak a} \in {\boldsymbol M}_m{[\ibs}]} \Tr\left[ \prod_{k \in {\mathfrak a}} (\Sigma H_k) \right] + \sum_{{\mathfrak a} \in {\boldsymbol N}_m [\ibs] - {\boldsymbol M}_m [\ibs]} \frac{1}{d} \Tr\left[ \prod_{k \in {\mathfrak a}} (\Sigma H_k) \right];
\label{(firstpart)}
\end{equation}
\item[{\it ii)}] $\tilde{{\boldsymbol \eta}} = (\bar{u}_1 \, \eta_1, \ldots, \bar{u}_m \, \eta_m)$ and ${\boldsymbol \eta}=(\eta_1, \ldots, \eta_m)$ such that
\begin{equation}
E[{\boldsymbol \eta}^{\ibs}] =  \sum_{{\mathfrak a} \in {\boldsymbol N}_m [\ibs]} \left( \sum_{b \in [{\mathfrak a}]} \Tr\left[ \Omega \prod_{k \in b} (\Sigma H_k) \right] \right).
\label{(secondpart)}
\end{equation}
\end{description}
\end{thm}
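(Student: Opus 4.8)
The plan is to prove (\ref{(jointmult1)}) by matching multivariate exponential generating functions. By (\ref{(gf)})--(\ref{(genfun1)}), the joint moment on the left is $\ibs!$ times the coefficient of $\zbs^{\ibs}$ in $E\{\exp(\Tr[W(n)\,H\,Z])\}$, once $H,Z$ are chosen so that $\Tr[W(n)HZ]=\sum_{j}\Tr[W(n)H_j]z_j$, i.e. $HZ=\sum_j H_j z_j$ and $\Sigma HZ=\sum_j z_j\,\Sigma H_j$; likewise the right-hand side is $\ibs!$ times the coefficient of $\zbs^{\ibs}$ in $f(-1\punt\beta\punt\tilde{\boldsymbol\eta}+n\punt\beta\punt\tilde{\boldsymbol\rho},\zbs)$. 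Since the two auxiliary umbrae $-1\punt\beta\punt\tilde{\boldsymbol\eta}$ and $n\punt\beta\punt\tilde{\boldsymbol\rho}$ are uncorrelated, just as the two summands in the univariate decomposition (\ref{(Iformachiusa1)}) of Theorem \ref{Wifund1}, this last g.f.\ factors as $f(-1\punt\beta\punt\tilde{\boldsymbol\eta},\zbs)\,f(n\punt\beta\punt\tilde{\boldsymbol\rho},\zbs)$, exactly mirroring the two factors of the m.g.f.\ in (\ref{(laptrasfter)}). It therefore suffices to match each factor separately, and for that I would pass to logarithms.

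For the denominator in (\ref{(laptrasfter)}), using $\log\det=\Tr\log$ I would write $-n\log\det(I_p-\Sigma HZ)=n\sum_{k\geq1}\frac1k\Tr[(\Sigma HZ)^k]$, to be matched with $\log f(n\punt\beta\punt\tilde{\boldsymbol\rho},\zbs)=n[f(\tilde{\boldsymbol\rho},\zbs)-1]$. Here the boolean unities are decisive: since the $\bar u_j$ are uncorrelated with $\boldsymbol\rho$ and $E[\bar u_j^{\,i_j}]=i_j!$ by (\ref{(boolean)}), one gets $E[\tilde{\boldsymbol\rho}^{\,\ibs}]=\ibs!\,E[\boldsymbol\rho^{\ibs}]$, so $f(\tilde{\boldsymbol\rho},\zbs)-1=\sum_{|\ibs|\geq1}E[\boldsymbol\rho^{\ibs}]\,\zbs^{\ibs}$ is the \emph{ordinary} g.f.\ of $\{E[\boldsymbol\rho^{\ibs}]\}$, the right shape to absorb the $1/k$ of the logarithm. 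Expanding $\Tr[(\Sigma HZ)^k]=\sum_{(j_1,\dots,j_k)}z_{j_1}\cdots z_{j_k}\,\Tr[\Sigma H_{j_1}\cdots\Sigma H_{j_k}]$ and reading off $\zbs^{\ibs}$ reduces matters to summing that trace over all strings of kind $\ibs$. Cyclic invariance of the trace makes the summand constant on each rotation orbit, so every orbit contributes its cardinality times a single representative trace; combined with the prefactor $1/|\ibs|$, an aperiodic orbit (a Lyndon word, of full size $|\ibs|$) receives weight $1$ and a periodic one the reduced weight $1/d$, which is precisely (\ref{(firstpart)}).

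For the numerator I would expand $(I_p-\Sigma HZ)^{-1}=\sum_{\ell\geq0}(\Sigma HZ)^{\ell}$ and use cyclicity once to obtain $\Tr[(I_p-\Sigma HZ)^{-1}\Omega\,\Sigma HZ]=\sum_{k\geq1}\Tr[\Omega(\Sigma HZ)^k]$, to be matched with $-\log f(-1\punt\beta\punt\tilde{\boldsymbol\eta},\zbs)=f(\tilde{\boldsymbol\eta},\zbs)-1=\sum_{|\ibs|\geq1}E[\boldsymbol\eta^{\ibs}]\,\zbs^{\ibs}$, again an ordinary g.f.\ by the same identity $E[\tilde{\boldsymbol\eta}^{\,\ibs}]=\ibs!\,E[\boldsymbol\eta^{\ibs}]$. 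Now there is no $1/k$ factor and, crucially, the fixed matrix $\Omega$ occupies a distinguished position in $\Tr[\Omega\,\Sigma H_{j_1}\cdots\Sigma H_{j_k}]$, so this trace is \emph{not} invariant under rotating the string. Hence no orbit collapse occurs: summing over all strings of kind $\ibs$ equals summing over the necklaces of kind $\ibs$ and then, inside each necklace $[\mathfrak a]$, over all its distinct rotations $b\in[\mathfrak a]$, which is exactly (\ref{(secondpart)}).

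The main obstacle is the necklace bookkeeping, and the point is that the two parts behave oppositely. In the central part the trace is cyclically invariant, so the rotations of a string collapse into one orbit-weighted term; the delicate step is verifying that the $1/|\ibs|$ from the logarithm combines with the orbit size to reproduce the $1$-versus-$1/d$ weights of (\ref{(firstpart)}) --- equivalently, a periodic necklace whose Lyndon factor repeats $d$ times has an orbit of size $|\ibs|/d$, so that $(1/|\ibs|)\cdot(|\ibs|/d)=1/d$. In the non-central part the insertion of $\Omega$ destroys cyclic invariance, forcing one to retain every rotation $b\in[\mathfrak a]$. Once both coefficient identities (\ref{(firstpart)}) and (\ref{(secondpart)}) are established, the factorization of the g.f.\ together with the uncorrelation of $\tilde{\boldsymbol\eta}$ and $\tilde{\boldsymbol\rho}$ closes the argument by comparing $\zbs^{\ibs}$ coefficients.
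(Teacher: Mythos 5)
Your proposal is correct and follows essentially the same route as the paper's proof: identify the two factors of the m.g.f.\ (\ref{(laptrasfter)}) with $\exp\{n[f(\tilde{\boldsymbol\rho},\zbs)-1]\}$ and $\exp\{-[f(\tilde{\boldsymbol\eta},\zbs)-1]\}$ via $[\det(I-A)]^{-1}=\exp(\sum_{j\geq1}\Tr(A^j)/j)$ and the geometric expansion of $(I-\Sigma HZ)^{-1}$, then regroup strings into necklaces, with cyclic invariance collapsing orbits in the central part and the position of $\Omega$ forcing the full sum over rotations in the non-central part. Your explicit accounting of the $1/d$ weight (orbit size $|\ibs|/d$ against the $1/|\ibs|$ from the logarithm) and of the role of the boolean unities in turning the exponential g.f.\ into an ordinary one is actually spelled out more carefully than in the paper, but it is the same argument.
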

If $|\ibs|$ is prime or ${\boldsymbol N}_m [\ibs] = {\boldsymbol M}_m [\ibs],$ then  (\ref{(firstpart)}) reduces to the first summation.
\begin{proof}
Within formal power series, for a matrix $A$ the following identity holds $[\det(I-A)]^{-1}=\exp \left( \sum_{j \geq 1} \Tr(A^j)/j \right),$ cf. \cite{Stanley1}.
Then taking into account (\ref{(genfun1)}) and (\ref{(laptrasfter)}), with $H Z$ replaced by $H_1 z_1 + \cdots + H_m z_m,$ we have
$\left( \det {[I_p -  \Sigma \, (H_1 z_1 + \cdots + H_m z_m) ]}\right)^{-n} = \exp \left\{ n [f(\tilde{\boldsymbol {\rho}}, {\zbs})-1] \right\}$ with
\begin{equation}
f(\tilde{\boldsymbol {\rho}}, {\zbs}) - 1 =   \sum_{j \geq 1} \frac{1}{j} \sum_{|\ibs|=j} \left[ \sum_{{\mathfrak a} \in
\{1,\ldots,m\}^{|\ibs|}} \Tr \left( \prod_{k \in {\mathfrak a}}  \Sigma H_{k} \right) \right] \, z_1^{i_1} \cdots z_m^{i_m}.
\label{(firstpartII)}
\end{equation}
Equation (\ref{(firstpart)}) follows by indexing the inner summation in (\ref{(firstpartII)}) with the necklaces in ${\boldsymbol N}_m [\ibs]$ for each $\ibs$ and by multiplying the corresponding traces with the cardinality of  necklaces, due to their cyclic property. In (\ref{(laptrasfter)}) since $(I-A)^{-1}= I + \sum_{j \geq 1} A^j,$ then $\Tr [(I_p -  \Sigma \, H \, Z)^{-1} \, \Omega \, \Sigma \,  H \, Z] =  \Tr [\Sigma \,  H \, Z \, (I_p -  \Sigma \, H \, Z)^{-1} \, \Omega] = \sum_{j \geq 1} \Tr[\Omega \, (\Sigma \, H \, Z)^j]$ and for $H Z$ replaced by $H_1 z_1 + \cdots + H_m z_m$ a characterization of $f(\tilde{{\boldsymbol \eta}}, \zbs)$ similar to (\ref{(firstpartII)}) follows
$$
f(\tilde{\boldsymbol {\eta}}, {\zbs}) - 1 =   \sum_{j \geq 1}  \sum_{|\ibs|=j} \left[ \sum_{{\mathfrak a} \in
\{1,\ldots,m\}^{|\ibs|}} \Tr \left( \Omega \prod_{k \in {\mathfrak a}}  \Sigma H_{k} \right) \right] \, z_1^{i_1} \cdots z_m^{i_m}.
$$
Differently from (\ref{(firstpartII)}), we could not group the inner summation with respect to the elements of necklaces, since we could not use the cyclic property of the trace, due to the position of the non-centrality matrix $\Omega.$
\end{proof}
A multinomial expansion is applied to the right-hand-side of (\ref{(jointmult1)}) in order to get
\begin{eqnarray}
E\left\{ \prod_{j=1}^m \Tr \left[ W(n) H_j \right]^{i_j} \right\} & = &  \displaystyle{\sum_{\tbs_1, \tbs_2 \in \mathbb{N}_0^m \atop \tbs_1 + \, \tbs_2 = \ibs}}
\frac{\ibs!}{\tbs_1! \tbs_2!} E[(- 1 \punt \beta \punt \tilde{{\boldsymbol \eta}})^{\tbs_1}] E[(n \punt \beta \punt \tilde{{\boldsymbol \rho}})^{\tbs_2}]
\label{(mommul)} \\
& = & \ibs! \displaystyle{\sum_{\tbs_1, \tbs_2 \in \mathbb{N}_0^m \atop \tbs_1 + \, \tbs_2 = \ibs}}
\left( \sum_{\lambdabs \mmodels \tbs_1}
\frac{(-1)^{l(\lambdabs)}}{{\mathfrak m}(\lambdabs)} \prod_{\lambdabs_i} E[{\boldsymbol \eta}^{\lambdabs_i}]^{r_i} \right)
\left( \sum_{\lambdabs \mmodels \tbs_2}
\frac{(n)^{l(\lambdabs)}}{{\mathfrak m}(\lambdabs)} \prod_{\lambdabs_i} E[{\boldsymbol \rho}^{\lambdabs_i}]^{r_i} \right)
\label{(addendi)}
\end{eqnarray}
where  $E[{\boldsymbol \eta}^{\lambdabs_i}]$ and $E[{\boldsymbol \rho}^{\lambdabs_i}]$ are given
in (\ref{(firstpart)}) and (\ref{(secondpart)}) respectively. The function {\tt MakeTab} in \cite{faa} has been employed for computing all the multi-indexes ${\tbs_1, \tbs_2}$
in (\ref{(addendi)}). A separate {\tt Maple} procedure \cite{Dinardoalg} has been set up in order to expand (\ref{(addendi)}), see the following example.
\begin{example} \label{excum}
For $m=2$ and $\ibs=(1,2),$ an explicit expression of (\ref{(addendi)}) is given in the following:
\begin{eqnarray*}
&  &  E\left\{ \Tr \left[ W(n) H_1 \right] \Tr \left[ W(n) H_2 \right]^2  \right\} =
n \Tr \left( H_{{2}} \right) \Tr \left( \Omega H_{{1}} H_{{2
}} \right) -n \Tr \left( H_{{2}} \right) \Tr \left( \Omega
H_{{2}} H_{{1}} \right) \\
& + &  n \Tr \left( H_{{2}} \right) \Tr
 \left( \Omega H_{{1}} \right) \Tr \left( \Omega H_{{2}} \right)
- n \Tr \left( \Omega H_{{2}} \right) \Tr \left( H_{{1}} H_{{2}} \right)
- n^2 \Tr \left( \Omega H_{{2}} \right) \Tr
 \left( H_{{1}} \right) \Tr \left( H_{{2}} \right)\\
&  + & 2\, \Tr \left( \Omega H_{{2}} \right) \Tr \left( \Omega H_{{1}} H_{{2}}
 \right) + 2 \, \Tr \left( \Omega H_{{2}} \right)\Tr \left(
\Omega H_{{2}} H_{{1}} \right) - \Tr \left( \Omega H_{{1}}
 \right)  \left( \Tr \left( \Omega H_{{2}} \right)  \right) ^{2} -  \Tr \left( \Omega H_{{1}} {H_{{2}}}^{2} \right) \\
 & - & \Tr \left( \Omega H_{{2}}
H_{{1}} H_{{2}} \right) + \Tr \left( \Omega H_{{1}} \right) \Tr \left( \Omega {H_{{2}}}^{2} \right) + 2\,{n}^{2} \Tr \left( H_{{1}} H_{{2}} \right) \Tr \left( H_{{2}} \right) + {n}^{3} \Tr
 \left( H_{{1}} \right)  \left( \Tr  \left( H_{{2}} \right)
 \right) ^{2} \\
& + &  n \Tr  \left( H_{{1}} {H_{{2}}}^{2} \right) +
 n^2/2 \Tr  \left( H_{{1}} \right) \Tr  \left( {H_{{2}}}^{2}
 \right) +{n}^{2} \Tr  \left( \Omega H_{{1}} \right) \left( \Tr
 \left( H_{{2}} \right)  \right) ^{2} - n/2\, \Tr  \left( \Omega H_{{1}} \right)
   \Tr  \left( {H_{{2}}}^{2} \right) \\
 & + &   n \Tr  \left( H_{{1}} \right)  \left( \Tr \left( \Omega H_{{2}}
 \right)  \right) ^{2}- n \Tr \left( H_{{1}} \right)  \Tr
 \left( \Omega {H_{{2}}}^{2} \right).
\end{eqnarray*}
\end{example}
\begin{rem}
{\rm If we replace the nonnegative integer $n$ with a scalar umbra $\alpha$ in (\ref{(jointmult1)}) then
\begin{equation}
E\left\{ \Tr \left[ W( \alpha ) H_1 \right]^{i_1} \cdots \Tr \left[ W(\alpha) H_m \right]^{i_m} \right\}  = E[(- 1 \punt \beta \punt \tilde{{\boldsymbol \eta}} + \alpha \punt \beta \punt \tilde{{\boldsymbol \rho}})^{\ibs}] \label{(jointmult1bis)}
\end{equation}
joint moments of randomized non-central Wishart distributions. Although not explicitly mentioned, this replacement could be performed for all the results given in the following.}
\end{rem}

Joint moments of $\tilde{\boldsymbol \rho}$ correspond to joint moments $E\{ \Tr[ \widehat{W}(n) H_1]^{i_1} \cdots \Tr [\widehat{W}(n) H_m]^{i_m} \}$ of central Wishart matrices  and are a generalization of formulae given in \cite{Letac1}.
Joint moments of $\tilde{\boldsymbol \eta}$ correspond to joint moments of $E\{ \Tr \left[ A H_1 \right]^{i_1} \cdots \Tr \left[ A H_m \right]^{i_m} \}$ with $A$ given in (\ref{(noncentralterm)}).
Differently from \cite{Letac1, Letac2}, where the representation theory of symmetric group is resorted in order to compute the moments of non-central Wishart distributions, Theorem \ref{Wifund2}
involves integer partitions. Proposition \ref{17} shows the connection between the two methods.
\begin{prop} \label{17} For $\ibs=(1, \ldots, 1)$
\begin{eqnarray}
E[(n \punt \beta \punt \tilde{{\boldsymbol \rho}})^{\ibs}] & = &  \sum_{\sigma \in {\mathfrak S}_m} n^{|C(\sigma)|} \prod_{c \in C(\sigma)} \Tr \left( \prod_{j \in c} \Sigma H_j \right), \label{(centrWishp)}\\
E[(-1 \punt \beta \punt \tilde{{\boldsymbol \eta}})^{\ibs}]
& = & \sum_{\sigma \in {\mathfrak S}_m} (-1)^{|C(\sigma)|} \prod_{c \in C(\sigma)} l(c) \Tr \left(\Omega \prod_{j \in c} \Sigma H_j \right). \label{(formalAp)}
\end{eqnarray}
\end{prop}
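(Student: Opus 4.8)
The plan is to specialise the multivariate moment formula (\ref{(eq:16)}) to the all-ones multi-index and then re-index the resulting sum over set partitions by permutations. First I would observe that when $\ibs=(1,\ldots,1)$ every row of a partition matrix $\lambdabs\mmodels\ibs$ sums to $1$, so each index $r\in\{1,\ldots,m\}$ occurs in exactly one column; hence the partitions $\lambdabs\mmodels\ibs$ are in bijection with the set partitions $\pi$ of $\{1,\ldots,m\}$, the columns being the $0$--$1$ indicator vectors $\boldsymbol{e}_b$ of the blocks $b\in\pi$. In this case $\ibs!=1$, $\lambdabs!=1$ and, since the blocks are pairwise distinct subsets, $\ml(\lambdabs)!=1$, so $d_{\lambdabs}=1$. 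Using the stated generating functions $f(n\punt\beta\punt\tilde{\boldsymbol\rho},\zbs)=\exp\{n[f(\tilde{\boldsymbol\rho},\zbs)-1]\}$ and $f(-1\punt\beta\punt\tilde{\boldsymbol\eta},\zbs)=\exp\{-[f(\tilde{\boldsymbol\eta},\zbs)-1]\}$, formula (\ref{(eq:16)}) with $\alpha$ representing $\{n^i\}$ (respectively $\{(-1)^i\}$) collapses to
\begin{equation*}
E[(n\punt\beta\punt\tilde{\boldsymbol\rho})^{\ibs}]=\sum_{\pi}n^{|\pi|}\prod_{b\in\pi}E[\tilde{\boldsymbol\rho}^{\,\boldsymbol{e}_b}],\qquad
E[(-1\punt\beta\punt\tilde{\boldsymbol\eta})^{\ibs}]=\sum_{\pi}(-1)^{|\pi|}\prod_{b\in\pi}E[\tilde{\boldsymbol\eta}^{\,\boldsymbol{e}_b}],
\end{equation*}
the sums running over all set partitions $\pi$ of $\{1,\ldots,m\}$.

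Next I would compute the per-block moments from (\ref{(firstpart)}) and (\ref{(secondpart)}). Since $\boldsymbol{e}_b$ is squarefree, every necklace of kind $\boldsymbol{e}_b$ uses each symbol of $b$ exactly once and is therefore aperiodic; thus ${\boldsymbol N}_m[\boldsymbol{e}_b]={\boldsymbol M}_m[\boldsymbol{e}_b]$, the $1/d$ correction in (\ref{(firstpart)}) disappears, and the necklaces of kind $\boldsymbol{e}_b$ are exactly the $(|b|-1)!$ cyclic orders of $b$. Because the ordinary trace is cyclic, $\Tr[\prod_{j\in c}\Sigma H_j]$ is well defined on each such cyclic order, giving $E[\tilde{\boldsymbol\rho}^{\,\boldsymbol{e}_b}]=\sum_{c}\Tr[\prod_{j\in c}\Sigma H_j]$, summed over the cyclic orders $c$ of $b$. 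For $\tilde{\boldsymbol\eta}$, (\ref{(secondpart)}) already carries the inner sum over the rotations $b'\in[\mathfrak a]$, which here cannot be contracted because $\Omega$ sits in front and breaks cyclic invariance; hence $E[\tilde{\boldsymbol\eta}^{\,\boldsymbol{e}_b}]=\sum_{c}\big(\sum_{b'\in[c]}\Tr[\Omega\prod_{k\in b'}\Sigma H_k]\big)$, again summed over the cyclic orders $c$ of $b$.

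Finally I would re-index by permutations. Recording a permutation $\sigma\in{\mathfrak S}_m$ by its cycle decomposition identifies $\sigma$ with the pair consisting of the set partition $\pi=\{\,\mathrm{supp}(c):c\in C(\sigma)\,\}$ and a choice of cyclic order on each block; since a block $b$ admits exactly $(|b|-1)!$ cyclic orders, this is a bijection and $|\pi|=|C(\sigma)|$. Substituting the per-block moments above and distributing the products over blocks therefore reassembles the partition sums into single sums over ${\mathfrak S}_m$: for $\tilde{\boldsymbol\rho}$ this yields (\ref{(centrWishp)}) verbatim, and for $\tilde{\boldsymbol\eta}$ the per-cycle factor is the rotation sum $\sum_{b'\in[c]}\Tr[\Omega\prod_{k\in b'}\Sigma H_k]$, which is exactly what $l(c)\,\Tr[\Omega\prod_{j\in c}\Sigma H_j]$ abbreviates (it reduces to $l(c)$ equal copies precisely when $\Omega$ is scalar), giving (\ref{(formalAp)}).

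I expect the delicate point to be the $\tilde{\boldsymbol\eta}$ case, where the non-commuting factor $\Omega$ destroys cyclic invariance: one must keep the sum over the $l(c)$ cyclic rotations explicit rather than replacing it by $l(c)$ times a single trace, and check that the rotation bookkeeping inherited from (\ref{(secondpart)}) matches the cycle structure of $\sigma$. The remaining ingredients --- the vanishing of the $1/d$ term for squarefree indices, the evaluation $d_{\lambdabs}=1$, and the set-partition/permutation bijection with its $(|b|-1)!$ count --- are routine.
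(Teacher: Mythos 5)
Your proof is correct and follows essentially the same route as the paper: specialize the dot-product moment formula to the all-ones multi-index, identify the multi-index partitions with set partitions of $\{1,\ldots,m\}$, evaluate the per-block moments of $\tilde{\boldsymbol\rho}$ and $\tilde{\boldsymbol\eta}$ via the (here all aperiodic) necklaces, and re-index by permutations through the standard bijection between $\sigma\in{\mathfrak S}_m$ and a set partition equipped with a cyclic order on each block. You are in fact somewhat more careful than the paper, which keeps a single trace per block weighted by $\prod_{B}(|B|-1)!$ and dispatches the $\tilde{\boldsymbol\eta}$ case with ``similar arguments''; your explicit retention of the sum over cyclic orders, and your reading of $l(c)\,\Tr(\Omega\prod_{j\in c}\Sigma H_j)$ as the rotation sum inherited from the necklace expansion, is exactly the right way to make that step rigorous.
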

\begin{proof}
Partitions of the multi-index $(1,\ldots,1)$ are matrices with entries equal to $0$ and $1.$ When $\ibs$ has entries equal to $0$ or $1,$ the multivariate moment of ${\boldsymbol \rho}$ involves
cyclic permutations of $(\Sigma H_1)^{i_1} \cdots (\Sigma H_k)^{i_k}$ with some powers equal to $0$ and the remaining equal to $1.$ The elements in the strings ${\mathfrak a}$ in (\ref{(firstpart)}) and (\ref{(secondpart)}) are all different and chosen in $\{1,\ldots,m\},$ so that ${\boldsymbol M}_m [\ibs] = {\boldsymbol N}_m [\ibs].$ Therefore for these
multi-indexes, equation (\ref{(firstpart)}) reduces to $E[{\boldsymbol \rho}^{\ibs}] = \Tr [(\Sigma H_1)^{i_1} \cdots (\Sigma H_k)^{i_k}].$
Equation (\ref{(centrWishp)}) follows by observing that for  $\ibs=(1, \ldots, 1)$ and from (\ref{(eq:16)})
$$E[(n \punt \beta \punt \tilde{{\boldsymbol \rho}})^{\ibs}] = \sum_{\pi \in \Pi_{|\ibs|}} {\mathfrak p}_{\pi} \, n^{|\pi|} \, \prod_{B \in \pi} \Tr \left[ \left( \prod_{j \in B} \Sigma H_j \right) \right],$$
with $\Pi_{|\ibs|}$ the set of all partitions \footnote{A partition $\pi$ of $\{\nu_1,  \ldots, \nu_i\}$ is a collection $\pi=\{B_1,  \ldots, B_k\}$
of $k \leq i$ disjoint non-empty subsets of $\{\nu_1, \ldots, \nu_i\}$ whose union is $\{\nu_1,  \ldots, \nu_i\}$. We denote the set of all partitions of the set $\{\nu_1, \ldots, \nu_i\}$ with $\Pi_i.$} of $\{1,2,\ldots,|\ibs|\}$
and ${\mathfrak p}_{\pi}  = \prod_{B \in \pi} (|B|-1)!$ the number of permutations of $\{1,2,\ldots,|\ibs|\}$ corresponding to the partition $\pi.$ The result follows by indexing the summation with permutations instead of partitions.
Equation (\ref{(formalAp)}) follows by similar arguments.
\end{proof}
For $\ibs=(1, \ldots, 1)$ and by using the permutation identity $e \in {\mathfrak S}_m,$ we have
$$E[(n \punt \beta \punt \tilde{{\boldsymbol \rho}})^{\ibs}] = E\left\{ \prod_{c \in C(e)} \Tr \left[ \widehat{W}(n) H_c \right] \right\} \quad \hbox{and} \quad E[(-1 \punt \beta \punt \tilde{{\boldsymbol \eta}})^{\ibs}]
=  E \left\{ \prod_{c \in C(e)} \Tr \left[ A H_c \right] \right\}.$$
By the group action of ${\mathfrak S}_m$ on ${\mathbb C},$ from (\ref{(centrWishp)}) and (\ref{(formalAp)}) we have
\begin{eqnarray}
E\left\{ \prod_{c \in C(\sigma)} \Tr \left[ \prod_{j \in c} \widehat{W}(n) H_j \right] \right\} & = & \sum_{\tau \in {\mathfrak S}_m} n^{l(\sigma \, \tau^{-1})} \prod_{c \in C(\tau)} \Tr \left( \prod_{j \in c} \Sigma H_j \right), \label{(genmom)} \\
E\left\{ \prod_{c \in C(\sigma)} \Tr \left[  \prod_{j \in c} A \, H_j \right] \right\} & = & \sum_{\tau \in {\mathfrak S}_m} (-1)^{l(\sigma \, \tau^{-1})} \prod_{c \in C(\tau)} l(c) \, \Tr \left(\Omega \prod_{j \in c} \Sigma H_j \right). \label{(genmom1)}
\end{eqnarray}
\subsection{Joint cumulants.}
Joint cumulants of non-central Wishart distributions can be recovered from Theorem \ref{Wifund2}, by using the additivity property given in Proposition 4.1 of \cite{faa}. Denote the $m$-tuple $(\Tr[W(n)H_1], \ldots,$ $\Tr[W(n)H_m])$
with $\mubs.$ Its $\ibs$-th joint cumulant $\hbox{Cum}_{\ibs}(\Tr[W(n)H_1], \ldots, \Tr[W(n)H_m])$  is the multivariate moment of $(\chi \, \punt \, \mubs)^{\ibs}.$ From (\ref{(jointmult1)}) we have
\begin{equation}
E[(\chi \, \punt \, \mubs)^{\ibs}] = E\left\{[\chi \, \punt \, (-1) \, \punt \, \beta \, \punt \, \tilde{\boldsymbol \eta}]^{\ibs} \right\} + E[(\chi \, \punt \, n \, \punt \, \beta \, \punt \, \tilde {\boldsymbol \rho})^{\ibs}].
\label{(cumwish)}
\end{equation}
Equation (\ref{(eq:16)}) allows us to compute $E[(\chi \punt (-1) \punt \, \beta \, \punt \tilde{\boldsymbol \eta})^{\ibs}]$ and $E[(\chi \, \punt \,n \, \punt \, \beta \, \punt \, \tilde {\boldsymbol \rho})^{\ibs}],$
with $\alpha$ replaced by $\chi \, \punt \, (-1)$ and $\chi \, \punt \, n$ respectively, and $\nubs$ replaced by $\tilde{\boldsymbol \eta}$ and $\tilde {\boldsymbol \rho}$ too. Since the moments of $\chi \, \punt \, (-1)$
and $\chi \, \punt \, n$ are all zero except the first, the only contribution in (\ref{(eq:16)}) is for $\lambdabs = \ibs$ for which $d_{\lambdabs} = 1.$ Then the following theorem is
proved, which generalizes Theorem \ref{cum2} and Proposition  \ref{cumwish}.
\begin{thm} \label{(multcum)}
$\hbox{\rm Cum}_{\ibs}(\Tr[W(n)H_1], \ldots, \Tr[W(n)H_m])  =  \ibs! (  n E[{\boldsymbol \rho}^{\ibs}] - E[{\boldsymbol \eta}^{\ibs}]),$ with $E[{\boldsymbol \rho}^{\ibs}]$ and
$E[{\boldsymbol \eta}^{\ibs}]$ given in  (\ref{(firstpart)}) and (\ref{(secondpart)}) respectively.
\end{thm}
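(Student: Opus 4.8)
The plan is to read the joint cumulant directly off the multivariate representation of Theorem \ref{Wifund2}, using that the singleton umbra $\chi$ collapses the partition sum (\ref{(eq:16)}) to a single surviving term. By definition the $\ibs$-th joint cumulant of $\mubs = (\Tr[W(n)H_1], \ldots, \Tr[W(n)H_m])$ is the multivariate moment $E[(\chi \, \punt \, \mubs)^{\ibs}]$. Taking the dot-product with $\chi$ on both sides of the equivalence $\mubs \equiv -1 \, \punt \, \beta \, \punt \, \tilde{\boldsymbol \eta} + n \, \punt \, \beta \, \punt \, \tilde{\boldsymbol \rho}$ of Theorem \ref{Wifund2} and invoking the multivariate additivity of cumulants (Proposition 4.1 of \cite{faa}), together with the associativity $\chi \, \punt \, (n \, \punt \, \beta \, \punt \, \tilde{\boldsymbol \rho}) \equiv \chi \, \punt \, n \, \punt \, \beta \, \punt \, \tilde{\boldsymbol \rho}$, gives the splitting (\ref{(cumwish)}).

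I would then evaluate the two summands separately with (\ref{(eq:16)}), choosing $\alpha = \chi \, \punt \, n$, $\nubs = \tilde{\boldsymbol \rho}$ in one and $\alpha = \chi \, \punt \, (-1)$, $\nubs = \tilde{\boldsymbol \eta}$ in the other. The decisive point, already exploited in the proof of Proposition \ref{cumwish}, is that $\chi \, \punt \, n$ has all moments zero except the first, which equals $n$, and likewise $\chi \, \punt \, (-1)$ has first moment $-1$. Consequently, in the sum $\sum_{\lambdabs \mmodels \ibs} d_{\lambdabs} \, a_{l(\lambdabs)} \, g_{\lambdabs}$ the factor $a_{l(\lambdabs)}$ vanishes whenever $l(\lambdabs) \geq 2$, so only the single-column partition $\lambdabs = \ibs$ contributes, and for it $d_{\lambdabs} = \ibs! / (\ml(\lambdabs)! \, \lambdabs!) = 1$. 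Each summand therefore reduces to the first moment times the full multivariate moment, i.e. to $n \, E[\tilde{\boldsymbol \rho}^{\ibs}]$ and $-E[\tilde{\boldsymbol \eta}^{\ibs}]$ respectively.

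The final step strips the boolean unities. Since $\tilde{\boldsymbol \rho} = (\bar{u}_1 \rho_1, \ldots, \bar{u}_m \rho_m)$ with $\bar{u}_1, \ldots, \bar{u}_m$ uncorrelated boolean unities representing $\{i!\}$ and having support disjoint from $\boldsymbol \rho$, uncorrelation yields
$$E[\tilde{\boldsymbol \rho}^{\ibs}] = \prod_{k=1}^m E[\bar{u}_k^{i_k}] \, E[\boldsymbol \rho^{\ibs}] = \ibs! \, E[\boldsymbol \rho^{\ibs}],$$
and in the same way $E[\tilde{\boldsymbol \eta}^{\ibs}] = \ibs! \, E[\boldsymbol \eta^{\ibs}]$. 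Substituting these into the two summands gives $E[(\chi \, \punt \, \mubs)^{\ibs}] = \ibs!\,(n \, E[\boldsymbol \rho^{\ibs}] - E[\boldsymbol \eta^{\ibs}])$, which is the assertion, with $E[\boldsymbol \rho^{\ibs}]$ and $E[\boldsymbol \eta^{\ibs}]$ supplied by (\ref{(firstpart)}) and (\ref{(secondpart)}). The main obstacle is purely one of bookkeeping: tracking exactly where the factor $\ibs!$ originates, namely from the factorial moments $E[\bar{u}_k^{i_k}] = i_k!$ of the boolean unities, and verifying that the one surviving partition carries weight $d_{\lambdabs} = 1$ rather than an additional multinomial coefficient; once these are pinned down, the result is a direct substitution into the established identities.
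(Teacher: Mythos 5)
Your proposal is correct and follows essentially the same route as the paper: split the cumulant umbra by additivity into the $\tilde{\boldsymbol \eta}$ and $\tilde{\boldsymbol \rho}$ contributions, apply (\ref{(eq:16)}) with $\alpha$ replaced by $\chi \, \punt \, (-1)$ and $\chi \, \punt \, n$ so that only the single-column partition $\lambdabs = \ibs$ (with $d_{\lambdabs}=1$) survives, and then extract the factor $\ibs!$ from the boolean unities. If anything, your explicit bookkeeping of the step $E[\tilde{\boldsymbol \rho}^{\ibs}] = \ibs!\, E[{\boldsymbol \rho}^{\ibs}]$ is a useful addition that the paper leaves implicit.
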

\begin{example}
If  $m=2$ and $\ibs=(1,2),$ then
\begin{eqnarray*}
\hbox{\rm Cum}_{(1,2)}(\Tr[W(n)H_1],\Tr[W(n)H_2]) & = & 2! \left\{ n \Tr\left[ (\Sigma H_1) (\Sigma H_2)^2 \right] - \Tr \left[ \Omega (\Sigma H_1) (\Sigma H_2)^2 \right] \right.\\
& - & \left. \Tr \left[ \Omega (\Sigma H_2)^2  (\Sigma H_1) \right] - \Tr \left[ \Omega (\Sigma H_1) (\Sigma H_2) (\Sigma H_1) \right] \right\}.
\end{eqnarray*}
\end{example}
Depending on the symbolic representation of non-central Wishart distributions (\ref{(jointmult1bis)}), cumulants of the randomized version are obtained
from (\ref{(cumwish)}) by replacing  $\chi \punt n$ with  $\chi \punt \alpha,$ which is the $\alpha$-cumulant umbra.
Then Theorem \ref{(multcum)} needs to be updated as follows.
\begin{thm} \label{(multcumrand)}
If $\{c_i\}$ is the sequence of cumulants of $\alpha,$ then
\begin{equation}
\hbox{\rm Cum}_{\ibs}(\Tr[W(\alpha)H_1], \ldots, \Tr[W(\alpha)H_m]) =  \ibs! \left( \sum_{\lambdabs \mmodels \tbs}
\frac{c_{l(\lambdabs)}}{{\mathfrak m}(\lambdabs)} \prod_{\lambdabs_i} E[{\boldsymbol \rho}^{\lambdabs_i}]^{r_i} - E[{\boldsymbol \eta}^{\ibs}] \right).
\label{(cumrand)}
\end{equation}
\end{thm}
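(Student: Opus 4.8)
The plan is to mimic, step for step, the derivation of Theorem~\ref{(multcum)} carried out in (\ref{(cumwish)})--(\ref{(eq:16)}), the only change being that the degenerate umbra $\chi\,\punt\,n$ is replaced by the genuine $\alpha$-cumulant umbra $\chi\,\punt\,\alpha$; the extra work is to track the integer partitions that no longer collapse once the higher moments of this umbra stop vanishing. First I would start from the randomized representation (\ref{(jointmult1bis)}), $\mubs\equiv-1\,\punt\,\beta\,\punt\,\tilde{{\boldsymbol\eta}}+\alpha\,\punt\,\beta\,\punt\,\tilde{{\boldsymbol\rho}}$ with $\mubs=(\Tr[W(\alpha)H_1],\ldots,\Tr[W(\alpha)H_m])$, and write the joint cumulant as $E[(\chi\,\punt\,\mubs)^{\ibs}]$. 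Because $\tilde{{\boldsymbol\eta}}$ and $\tilde{{\boldsymbol\rho}}$ have disjoint supports the two summands are uncorrelated, so the additivity of multivariate cumulants (Proposition~4.1 of \cite{faa}), together with the replacement of $\chi\,\punt\,n$ by $\chi\,\punt\,\alpha$ noted just before the statement, gives
\begin{equation*}
E[(\chi\,\punt\,\mubs)^{\ibs}]=E[(\chi\,\punt\,(-1)\,\punt\,\beta\,\punt\,\tilde{{\boldsymbol\eta}})^{\ibs}]+E[(\chi\,\punt\,\alpha\,\punt\,\beta\,\punt\,\tilde{{\boldsymbol\rho}})^{\ibs}].
\end{equation*}

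The first summand is unchanged from Theorem~\ref{(multcum)}: since $\chi\,\punt\,(-1)$ has all moments zero except the first, equal to $-1$, only the length-one partition $\lambdabs=\ibs$ (with $d_{\lambdabs}=1$) survives in (\ref{(eq:16)}), yielding $-E[\tilde{{\boldsymbol\eta}}^{\ibs}]$. Writing $\tilde{{\boldsymbol\eta}}^{\ibs}=\big(\prod_k\bar{u}_k^{\,i_k}\big){\boldsymbol\eta}^{\ibs}$ and using the uncorrelation of the boolean units together with $E[\bar{u}_k^{\,i_k}]=i_k!$ converts this into $-\ibs!\,E[{\boldsymbol\eta}^{\ibs}]$, the $\eta$-term of the statement with $E[{\boldsymbol\eta}^{\ibs}]$ as in (\ref{(secondpart)}).

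For the second summand the moments of $\chi\,\punt\,\alpha$ are the cumulants $\{c_i\}$ of $\alpha$, so now every multi-index partition contributes. Applying (\ref{(eq:16)}) with $\alpha$ there replaced by $\chi\,\punt\,\alpha$ gives $\sum_{\lambdabs\mmodels\ibs}d_{\lambdabs}\,c_{l(\lambdabs)}\prod_i E[\tilde{{\boldsymbol\rho}}^{\lambdabs_i}]^{r_i}$. I would then substitute $E[\tilde{{\boldsymbol\rho}}^{\lambdabs_i}]=\lambdabs_i!\,E[{\boldsymbol\rho}^{\lambdabs_i}]$ (again the boolean-unity factor) into this expansion; using the value of $d_{\lambdabs}$ from (\ref{(eq:15)}) and the identity $\prod_i(\lambdabs_i!)^{r_i}=\lambdabs!$, the column factorials cancel the $\lambdabs!$ in $d_{\lambdabs}$ and leave $\ibs!\sum_{\lambdabs\mmodels\ibs}\frac{c_{l(\lambdabs)}}{{\mathfrak m}(\lambdabs)}\prod_i E[{\boldsymbol\rho}^{\lambdabs_i}]^{r_i}$ with $E[{\boldsymbol\rho}^{\lambdabs_i}]$ as in (\ref{(firstpart)}). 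Adding the two contributions and factoring out $\ibs!$ produces (\ref{(cumrand)}).

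The associativity rewriting and the boolean-unity bookkeeping are routine. The one conceptual point --- and the precise sense in which this generalizes the degenerate situation of Theorem~\ref{(multcum)} --- is that after the single outer $\chi$ of the cumulant umbra has been absorbed into $\alpha$ to form $\chi\,\punt\,\alpha$, it is the \emph{cumulants} $c_{l(\lambdabs)}$ of $\alpha$, indexed by the partition length, and not its raw moments, that weight the composition (\ref{(eq:16)}); keeping this single bookkeeping step correct, so that the collapse to $\lambdabs=\ibs$ of Theorem~\ref{(multcum)} is replaced by the full sum over $\lambdabs\mmodels\ibs$, is the crux of the argument.
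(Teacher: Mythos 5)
Your proposal is correct and follows exactly the route the paper intends: the paper derives this theorem by taking the decomposition (\ref{(cumwish)}) and replacing $\chi \, \punt \, n$ with the $\alpha$-cumulant umbra $\chi \, \punt \, \alpha$, so that the $\tilde{\boldsymbol \eta}$-term still collapses to the single partition $\lambdabs = \ibs$ while the $\tilde{\boldsymbol \rho}$-term retains the full sum over $\lambdabs \mmodels \ibs$ weighted by $c_{l(\lambdabs)}$ via (\ref{(eq:16)}). Your explicit bookkeeping of the boolean-unity factors $\lambdabs_i!$ cancelling against $\lambdabs!$ in $d_{\lambdabs}$ is exactly the computation the paper leaves implicit (and your writing $\lambdabs \mmodels \ibs$ rather than the paper's $\tbs$ is the correct reading of the statement).
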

\begin{example}
If $m=2$ and $\ibs=(1,2),$ in order to compute $\hbox{\rm Cum}_{(1,2)}(\Tr[W(\alpha)H_1],\Tr[W(\alpha)H_2])$ and to better understand the powerful of the symbolic method, we could
employ the results of Example \ref{excum} for the part not involving the non-centrality matrix $\Omega.$ Indeed, the summation in (\ref{(cumrand)}) corresponds to the latter in (\ref{(addendi)})
with $n^{l(\lambdabs)}$ replaced by $c_{l(\lambdabs)}.$ Then in Example \ref{excum}, we can select the terms not involving $\Omega$ and replace the occurrences of
$n^k$ with $c_k.$ The result is the following:
\begin{eqnarray*}
&  &  \hbox{\rm Cum}_{(1,2)}(\Tr[W(\alpha)H_1],\Tr[W(\alpha)H_2]) =
4 \,c_{2} \Tr \left( H_{{1}} H_{{2}} \right) \Tr \left( H_{{2}} \right) + 2 \, c_{3} \Tr
 \left( H_{{1}} \right)  \left( \Tr  \left( H_{{2}} \right)
 \right) ^{2} \\
 & + &  2 \, c_1 \Tr  \left( H_{{1}} {H_{{2}}}^{2} \right) + c_2  \Tr  \left( H_{{1}} \right) \Tr  \left( {H_{{2}}}^{2}
 \right) - 2 \Tr \left[ \Omega \, (\Sigma H_1)\, (\Sigma H_2)^2 \right] - 2 \Tr \left[ \Omega\, (\Sigma H_2)^2 \, (\Sigma H_1) \right] \\
 & - & 2 \Tr \left[ \Omega \,(\Sigma H_1) \,(\Sigma H_2) \,(\Sigma H_1) \right].
\end{eqnarray*}
\end{example}
\section{Applications.}
\subsection{Permanents.}
Let $Y = (y_{ij}) \in  {\mathbb C}^{p \times p}.$  For $d \in {\mathbb C},$ the $d$-permanent ${\hbox{\rm per}}_{d} (Y) $ of the matrix $Y$ is the function
\begin{equation}
{\hbox{\rm per}}_{d} (Y) : = \sum_{\sigma \in {\mathfrak S}_p} d^{|C(\sigma)|} \prod_{j=1}^p y_{j, \sigma(j)}.
\label{(perm)}
\end{equation}
The determinant function corresponds to $d = -1$ while $d=1$ gives the classical permanent function.
The $d$-extension of the master theorem \cite{Lu} states that if $Z = \hbox{\rm diag}(z_1, \ldots, z_m)$ and
$T = (t_{i,j})$ is a $m \times m$ matrix such that\footnote{If $A$ is a $m \times m$ matrix, then $\| A \| := \max \{ |\theta_1|, \ldots, |\theta_m| \},$ where
$\theta_1, \ldots, \theta_m$ are the eigenvalues of $A.$} $\norm{ZT} < 1$ then ${\hbox{\rm per}}_{d} [T(\ibs)]$ is the $\ibs$-coefficient of $\det(I - Z T)^{-d},$
$${\hbox{\rm per}}_{d} [T(\ibs)] = \frac{\partial^{|\ibs|}}{\partial z_1^{i_1} \cdots \partial z_m^{i_m}} \left[ \det(I - Z T)^{-d} \right]_{\zbs = {\boldsymbol 0}} \quad
\hbox{with} \quad
T(\ibs) = \left( \begin{array}{ccc}
t_{i_1,i_1} & \ldots &  t_{i_1,i_m} \\
\vdots & \vdots & \vdots \\
t_{i_m,i_1} & \ldots &  t_{i_m,i_m}
\end{array} \right).$$
From (\ref{(laptrasfbis)}), $\det(I - Z T)^{-d}$ is the g.f. of a class of central Wishart distributions. Then the following result holds when $Z T$ can be decomposed as $\Sigma \, (H_1 z_1 + \cdots + H_m z_m)$
for suitable matrices $\Sigma$  and $H_1, \ldots, H_m.$
\begin{prop} \label{perm}
If there exists a symmetric matrix $\Sigma \in {\mathbb C}^{p \times p}$ and $H_1, \ldots, H_m \in {\mathbb C}^{p \times p}$ such that $Z T = \Sigma (H_1 z_1 + \cdots + H_kz_k),$
then ${\hbox{\rm per}}_{d} [T(\ibs)] = E[(d \, \punt \, \beta \, \punt \, \tilde{\boldsymbol \rho})^{\ibs}]$ with $\tilde{\boldsymbol \rho}$ given in (\ref{(firstpart)}).
\end{prop}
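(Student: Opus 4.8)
The plan is to read ${\hbox{\rm per}}_{d}[T(\ibs)]$ directly off the $d$-extension of the master theorem and then to recognize the resulting generating function as the one attached to the umbra $d \, \punt \, \beta \, \punt \, \tilde{\boldsymbol \rho}$. By the master theorem \cite{Lu}, ${\hbox{\rm per}}_{d}[T(\ibs)]$ is the $\ibs$-th coefficient of $\det(I - ZT)^{-d}$, extracted by the mixed partial derivative $\partial^{|\ibs|}/\partial z_1^{i_1} \cdots \partial z_m^{i_m}$ evaluated at $\zbs = {\boldsymbol 0}$. First I would use the hypothesis $ZT = \Sigma(H_1 z_1 + \cdots + H_m z_m)$ to rewrite this determinant as $\det[I_p - \Sigma(H_1 z_1 + \cdots + H_m z_m)]^{-d}$, which by (\ref{(laptrasfbis)}) with $\Omega = 0$ is exactly the generating function of a central Wishart family, now carrying the exponent $d$ in place of $n$; here the symmetry of $\Sigma$ is what legitimizes reading the determinant as such a g.f., so that the necklace description (\ref{(firstpart)}) of $E[{\boldsymbol \rho}^{\ibs}]$ applies. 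The whole statement then reduces to identifying this determinant with $f(d \, \punt \, \beta \, \punt \, \tilde{\boldsymbol \rho}, \zbs)$ and matching coefficients.

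For an integer exponent the identification is already available. In the proof of Theorem \ref{Wifund2} it is shown, via $[\det(I - A)]^{-1} = \exp(\sum_{j \geq 1} \Tr(A^j)/j)$ and the necklace bookkeeping behind (\ref{(firstpartII)}), that
\[
\left[ \det(I_p - \Sigma(H_1 z_1 + \cdots + H_m z_m)) \right]^{-n} = \exp\{ n[f(\tilde{\boldsymbol \rho}, \zbs) - 1] \},
\]
and the right-hand side is by definition $f(n \, \punt \, \beta \, \punt \, \tilde{\boldsymbol \rho}, \zbs)$. Since every exponential g.f. has the form $1 + \sum_{\ibs} E[\nubs^{\ibs}] \, \zbs^{\ibs}/\ibs!$, the mixed partial at $\zbs = {\boldsymbol 0}$ extracts precisely the moment $E[\nubs^{\ibs}]$. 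Applying this with $\nubs = d \, \punt \, \beta \, \punt \, \tilde{\boldsymbol \rho}$ would give the claim at once, so the only remaining point is to promote the integer $n$ to an arbitrary $d \in {\mathbb C}$.

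That promotion is where I expect the only genuine subtlety. For complex $d$ the symbol $d \, \punt \, \beta \, \punt \, \tilde{\boldsymbol \rho}$ is to be read as the dot-product $d \, \punt \, (\beta \, \punt \, \tilde{\boldsymbol \rho})$ of the scalar $d$ with the partition umbra $\beta \, \punt \, \tilde{\boldsymbol \rho}$; because the partition-umbra g.f. $f(\beta \, \punt \, \tilde{\boldsymbol \rho}, \zbs) = \exp\{f(\tilde{\boldsymbol \rho}, \zbs) - 1\}$ has constant term $1$, its $d$-th power
\[
f(d \, \punt \, \beta \, \punt \, \tilde{\boldsymbol \rho}, \zbs) = \left[ \exp\{f(\tilde{\boldsymbol \rho}, \zbs) - 1\} \right]^{d} = \exp\{ d[f(\tilde{\boldsymbol \rho}, \zbs) - 1] \}
\]
is a well-defined formal power series for every $d \in {\mathbb C}$ (equivalently, the moment formula (\ref{(eq:15)}) extends to complex $d$ through the falling factorials $(d)_{l(\lambdabs)}$, with each $E[{\boldsymbol \rho}^{\lambdabs_i}]$ supplied by (\ref{(firstpart)})). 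On the matrix side the same formal identity reads $\det(I_p - \Sigma(H_1 z_1 + \cdots + H_m z_m))^{-d} = \exp\{ d \sum_{j \geq 1} \Tr[(\Sigma(H_1 z_1 + \cdots + H_m z_m))^j]/j \}$, and the inner cyclic-trace series coincides with $f(\tilde{\boldsymbol \rho}, \zbs) - 1$ regardless of the exponent. Matching the two displays gives $f(d \, \punt \, \beta \, \punt \, \tilde{\boldsymbol \rho}, \zbs) = \det(I_p - \Sigma(H_1 z_1 + \cdots + H_m z_m))^{-d}$ for all $d$, whence the coefficient extraction of the previous paragraph yields ${\hbox{\rm per}}_{d}[T(\ibs)] = E[(d \, \punt \, \beta \, \punt \, \tilde{\boldsymbol \rho})^{\ibs}]$. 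The hard part is thus purely the formal-power-series justification of the non-integer exponent; the necklace description of $E[{\boldsymbol \rho}^{\ibs}]$ on which it rests is a verbatim reuse of the grouping already performed in the proof of Theorem \ref{Wifund2}.
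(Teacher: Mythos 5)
Your argument is correct and follows essentially the same route as the paper, which justifies the proposition in a single sentence by identifying $\det(I-ZT)^{-d}$ via (\ref{(laptrasfbis)}) with the generating function of a central Wishart family and reading off the $\ibs$-coefficient through the master theorem; your write-up simply makes explicit the $\exp\{d[f(\tilde{\boldsymbol\rho},\zbs)-1]\}$ identification from the proof of Theorem \ref{Wifund2} and the formal extension from integer $n$ to complex $d$, both of which are exactly the steps the paper leaves implicit.
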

A generalization of Proposition \ref{perm} consists in replacing the complex $d$ with an umbra $\alpha$ umbrally representing $\{a_i\},$ that is
$${\hbox{\rm per}}_{\alpha} [T(\ibs)] : = \sum_{\sigma \in {\mathfrak S}_p} a_{|C(\sigma)|} \prod_{j=1}^p \left[T(\ibs)\right]_{j, \sigma(j)} = E[(\alpha \, \punt \,  \beta \, \punt \, \tilde{\boldsymbol \rho})^{\ibs}].$$

\subsection{Spectral polykays.}
Cumulants of Wishart distributions have been employed in approximating the density of sample correlation matrix \cite{Kollo} or within
wireless communications \cite{Tulino}. Moreover, due to Theorem \ref{cum2}  estimations of cumulants can add more information on the structure of the covariance
matrix and of the non-centrality matrix. A classical way to estimate cumulants and their products is by using $k$-statistics and polykays \cite{Bernoulli}. They rely
on power sum symmetric polynomials involving i.i.d. r.v.'s of random samples. A different way has been recently proposed in \cite{DMS} by introducing the notion of spectral sample.
Consider the matrix $H_{[m \times p]}$ obtained by deleting the last $p-m$ rows of a random unitary matrix $H$ uniformly distributed with respect
to Haar measure \cite{Speicher}. The set of eigenvalues $\ybs=(y_1, \ldots, y_m) \in \Real^m$ of the $m \times m$
Hermitian random matrix $Y = H_{[m \times p]} X H_{[p \times m]}^{\dag}$ is called a {\sl spectral sample} of size $m$ from $(x_1, \ldots, x_p) \in {\mathbb R}^p$
with $X = \hbox{diag}(x_1, \ldots, x_p).$ If $\lambda = (1^{r_1}, 2^{r_2}, \ldots)$ is a partition of some nonnegative integers \footnote{Here $\lambda$ runs through all partitions of length less or equal to $m$ \cite{MacDonald}.},
spectral polykays ${\mathfrak K}_{\lambda}(\boldsymbol{\ybs})$ are symmetric polynomials expressed in terms of $\Tr(Y^i),$ such that
\begin{equation}
E[{\mathfrak K}_{\lambda}(\boldsymbol{\ybs})] = \prod_{i=1}^{l(\lambda)} E(\Tr[{\mathfrak C}(m)]^{\lambda_i})
\label{(defspect)}
\end{equation}
with $\Tr[{\mathfrak C}(m)]$ given in (\ref{(clascum)}). For non-central Wishart distributions and from Theorem \ref{relcum}, spectral polykays (\ref{(defspect)})  are unbiased estimators of normalized cumulants
\begin{equation}
{\hbox {\rm Cum}}_{\lambda}(\Tr[W(m)])  =  \prod_{i=1}^{l(\lambda)} \left({\hbox {\rm Cum}}_{i}(\Tr[W(m)])\right)^{r_i} = p^{l(\lambda)} E[{\mathfrak K}_{\lambda}(W(m))]. \label{(spectral10)}
\end{equation}
For central Wishart distributions, which are unitarily invariant \footnote{A unitarily invariant central Wishart matrix is such that $\widehat{W}(n)=U {\mathit \Lambda} U^{\dag},$ with $U$ a Haar matrix independent of the diagonal matrix $\mathit \Lambda$ of the eigenvalues of $\widehat{W}(n),$ and the superscript ${\dag}$ denoting the conjugate transpose. The columns of $U$ are the eigenvectors of $\widehat{W}(n).$} the spectral sampling is a way to extract information from the set of its eigenvalues when $X=\mathit \Lambda.$ In this case,
the matrix $Y$ is a principal sub matrix \cite{Drton} of $\widehat{W}(n),$ and ${\mathfrak K}_{\lambda}(\boldsymbol{\ybs})$ are unbiased estimators of their normalized cumulants. These estimators
do not depend from the size of the sub matrix, since $E[{\mathfrak K}_{\lambda}(W(n)_{m \, m})|W(n)]={\mathfrak K}_{\lambda}(W(n)),$
for $m \leq n,$ where $W(n)_{m \, m}$ denotes the principal sub matrix involving the first $m$ rows of $W(n),$ cf. \cite{DMS}. The spectral estimators of mean, variance, asymmetry and kurtosis are given in the following:
\begin{eqnarray*}
\kappa_{(1)}   &=& \frac{S_1}{m}, \,\,  \kappa_{(2)} = \frac {m S_2-S_1^2}{ m \left( m^2-1 \right)}, \,\, \kappa_{(3)} = 2 \, \frac {2 \, S_1^{3} - 3 \, m \,S_1 \, S_2 + m^{2} S_3}{ m
\left( m^2-1 \right)  \left( m^2-4 \right)}\\
\kappa_{(4)} &=& 6 \, \frac{-5 S_1^4 + 10 m S_1^2 S_2 + (3 - 2 m^2) S_2^2 - (4 + 4 m^2) S_1 S_3 +
(m + m^3) S_4}{m^2 \, (m^2-1) \, (m^2-4) \, (m^2-9)}
\end{eqnarray*}
\subsection{Generalized moments and open problems.}
Similarly to the product (\ref{(genmom)}) involving central Wishart matrices, a generalization of joint moments (\ref{(joint)}) is given by
\begin{equation}
E\left\{ \prod_{c \in C(\sigma)} \Tr \left[ \prod_{j \in c} W(n) H_j \right] \right\}, \qquad \sigma \in {\mathfrak S}_k.
\label{(genmom2)}
\end{equation}
Differently from (\ref{(genmom)}), this computation could not be performed  by using the group action of ${\mathfrak S}_k$ on ${\mathbb C}.$
A different strategy consists in using the convolution  given in Theorem \ref{Wifund1}.
The first step is to transform (\ref{(genmom2)}) in a summation of joint moments
involving $\widehat{W}(n)$ and $A.$  This is the result of the following proposition.
\begin{prop}  
\begin{equation}
E \left[ \prod_{c \in C(\sigma)} \Tr \left( \prod_{j \in c} W(n) \, H_j \right) \right] = \sum_{(B_1, \ldots, B_k) \, \in \{\widehat{W}(n), \, A\}^k} \prod_{c \in C(\sigma)} E \left[\Tr \left( \prod_{j \in c} B_j H_j \right) \right].
\label{(firststepeq)}
\end{equation}
\end{prop}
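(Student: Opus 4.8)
The plan is to promote the trace-level convolution of Theorem \ref{Wifund1} to a matrix-level decomposition and then to expand each cyclic trace by multilinearity. By Theorem \ref{Wifund1} and Remark \ref{rem1}, the umbral matrix $W(n)$ decomposes as $W(n) \equiv \widehat{W}(n) + A$, where $\widehat{W}(n)$ carries the central component and $A$ is the non-centrality matrix of (\ref{(noncentralterm)}); moreover the entries of $\widehat{W}(n)$ and those of $A$ are built from disjoint families of umbrae, so that any umbral polynomial in the entries of the former is uncorrelated with any umbral polynomial in the entries of the latter. First I would insert $W(n) H_j = \widehat{W}(n) H_j + A H_j$ in every factor of every cyclic product $\prod_{j \in c} W(n) H_j$, $c \in C(\sigma)$.

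For a fixed cycle $c$, multilinearity of the matrix product turns $\prod_{j \in c} (\widehat{W}(n) + A) H_j$ into the sum, over all local choices $B_j \in \{ \widehat{W}(n), A \}$ with $j \in c$, of $\prod_{j \in c} B_j H_j$; linearity of the trace then carries this sum outside $\Tr$. Forming the product over the cycles of $\sigma$ converts the collection of per-cycle sums into a single sum indexed by the global assignments $(B_1, \ldots, B_k) \in \{ \widehat{W}(n), A \}^k$, and linearity of the evaluation $E$ lets me take this finite sum out of the expectation. This is exactly the reduction announced in (\ref{(firststepeq)}): the generalized moment (\ref{(genmom2)}) of $W(n)$ is rewritten as a sum of generalized moments in which each occurrence of the Wishart matrix is replaced either by its central part $\widehat{W}(n)$ or by the formal term $A$. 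The uncorrelation of $\widehat{W}(n)$ and $A$ is what guarantees that, in each summand, the two components are cleanly separated, so that the next step can evaluate the central contributions through (\ref{(genmom)}) and the $A$-contributions through (\ref{(genmom1)}).

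The genuine difficulty is not the combinatorial bookkeeping but the justification of the matrix-level splitting $W(n) \equiv \widehat{W}(n) + A$, since Theorem \ref{Wifund1} records the convolution only for the scalar $\Tr[W(n)]$, whereas here $W(n)$ recurs several times inside a single cyclic trace and is interleaved with the matrices $H_j$. I would therefore read the decomposition entrywise and, crucially, require that all occurrences of $\widehat{W}(n)$ (respectively of $A$) across the different cycles denote the \emph{same} umbral matrix, while $\widehat{W}(n)$ and $A$ remain mutually uncorrelated. It is here that the symbolic framework is indispensable: by Remark \ref{rem1} the summand $A$ has no probabilistic counterpart --- it is a formal compound Poisson object with negative parameter --- so that only the uncorrelation property of $E$ on disjoint umbral supports lends rigorous meaning to the expansion. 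Once this decomposition is in place, the identity follows solely from the bilinearity of matrix multiplication, the linearity of the trace, and the linearity of $E$.
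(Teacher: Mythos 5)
Your proposal is correct and follows essentially the same route as the paper's own proof: replace $W(n)$ by $\widehat{W}(n)+A$, expand each cyclic trace $\Tr\bigl(\prod_{j\in c}(\widehat{W}(n)+A)H_j\bigr)$ by multilinearity, merge the per-cycle sums into a single sum over $(B_1,\ldots,B_k)\in\{\widehat{W}(n),A\}^k$, and apply $E$. One caveat: your argument (exactly like the paper's, which ends with ``multiplying the summations and applying the operator $E$'') actually terminates at $\sum_{(B_1,\ldots,B_k)} E\bigl[\prod_{c\in C(\sigma)}\Tr\bigl(\prod_{j\in c}B_jH_j\bigr)\bigr]$, whereas the displayed right-hand side of (\ref{(firststepeq)}) places $E$ inside the product over cycles; neither you nor the paper justifies that final interchange, which would require the traces attached to distinct cycles to be uncorrelated even though the same matrices $\widehat{W}(n)$ and $A$ occur in every cycle.
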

\begin{proof}
In (\ref{(genmom2)}), replace $W(n)$ with $\widehat{W}(n) + A$ and observe that
$$\prod_{c \in C(\sigma)} \Tr \left( \prod_{j \in c} (\widehat{W}(n) + A) H_j \right) = \prod_{c \in C(\sigma)} \sum_{(B_{j_1}, \ldots, B_{j_{l(c)}}) \, \in \{\widehat{W}(n), \, A\}^k} \Tr (B_{j_1} H_{j_1} \cdots
B_{j_{l(c)}} H_{j_{l(c)}}).$$
The result follows by multiplying the summations and by applying the operator $E.$
\end{proof}
The second step is to compute the right-hand-side of (\ref{(firststepeq)}) by separating the contribution of $\widehat{W}(n)$ and $A,$ in order to employ (\ref{(genmom)}) and (\ref{(genmom1)}).
A way to perform such computation is to resort free probability \cite{Speicher}, since $\widehat{W}(n)$ and $A$ are elements of a noncommutative probability space\footnote{A noncommutative probability space is a pair $({\mathbb A},\Phi)$, where ${\mathbb A}$ is a unital noncommutative algebra and $\Phi: {\mathbb A} \rightarrow {\mathbb C}$ is a unital linear functional. } $(M_n({\mathbb C}[{\mathcal A}]),\Tr)$, with $M_n({\mathbb C}[{\mathcal A}])$ the algebra of umbral matrices with the usual matrix multiplication. In the following, a way involving free cumulants is suggested to perform such computation: it would be convenient to characterize some closed form formula involving the functional $\Tr$ without the employment of free cumulants. This is still an open problem.

Let ${\mathcal NC}$ be the lattice of all noncrossing partitions\footnote{A noncrossing partition $\pi = \{B_1,B_2, \ldots ,B_k\}$ of the set $[i]$ is a partition such that if $1 \leq h < l < s < k \leq i$, with $h, s \in B_n$ and $l,k \in B_{n^{\, \prime}}$, then $n = n^{\prime}$. } of $[i]$. For any noncrossing partition $\pi,$ set
$$\Tr_{\pi}(B_1, \ldots, B_i)=\prod_{D \in \pi} \Tr (B_{j_1} \cdots B_{j_s})$$
for $D=(j_1 < \cdots < j_s)$. Free cumulants are defined as multilinear functionals such that
\begin{equation}
c_{\pi}(B_1, \ldots, B_i)=\prod_{D \in \pi} c_{|D|}(B_{j_1} \cdots B_{j_s}) \quad \hbox{with} \quad c_i(B_1, \ldots, B_i)=\sum_{\pi \in {\mathcal NC}} {\mathfrak m}(\pi, 1_i) \, \Tr_{\pi}(B_1, \ldots, B_i),
\label{(freecum)}
\end{equation}
where ${\mathfrak m}(\pi, 1_i)$ is the Moebius function on the lattice of noncrossing partitions \cite{Speicher}.  For two sets $\{S_1, \ldots, S_k\}$ and $\{T_1, \ldots, T_k\}$ of random matrices satisfying
the hypothesis of Theorem 14.4 in \cite{Speicher}, we have
\begin{equation}
\Tr(S_1 T_1 \cdots S_k T_k) = \sum_{\pi_S \in {\mathcal NC}(1,3,\ldots,2 n - 1)} c_{\pi_S}[S_1, \ldots, S_k] \left( \sum_{\pi_T \in {\mathcal NC}(2,4,\ldots,2 n) \atop \pi_S \cup \pi_T \in {\mathcal NC}(n)}
c_{\pi_T}[T_1, \ldots, T_k]\right).
\label{(tool)}
\end{equation}
The products on the right-hand-side of (\ref{(firststepeq)}) could be computed by using (\ref{(tool)}) with a suitable choice of the matrices $S_i$ and $T_i.$ For example, in order to compute $\Tr(A \, H_1
\, A \, H_2 \, \widehat{W}(n) \, H_3 \, \widehat{W}(n) \, H_4)$ choose $S_1 = A \, H_1, T_1 = I, S_2 = A \, H_2, T_2 = \widehat{W}(n) \, H_3, S_3 = I, T_3 = \widehat{W}(n) \, H_4.$
An expression of (\ref{(firststepeq)}) in terms of (\ref{(genmom)}) and (\ref{(genmom1)}) follows by using (\ref{(freecum)}) and by characterizing the permutations $\sigma$ corresponding to the non-crossing partitions $\pi$
involved in (\ref{(tool)}). The hypothesis of Theorem 14.4 in \cite{Speicher} rely on the notion of freely independence of $S_i$ and $T_i.$ This theorem could be still employed if  $S_i$ and $T_i$
are asymptotical free independent, that is if $A$ and $\widehat{W}(n)$ are asymptotical free independent. This is true if the fluctuation of $A$ is not too large, behaving
like a constant matrix. Of course, this fluctuation depends on the non-centrality matrix $\Omega.$ So, if $A$ admits an eigenvalue distribution and all normalized traces of $A$ would have $1/p^2$ estimates \cite{Speicher},
then $A$ and $\widehat{W}(n)$ are asymptotical free independent.

\end{document}